\numberwithin{equation}{section}
\newtheorem*{claim}{Claim}
\newtheorem{theorem}{Theorem}[section]
\newtheorem{definition}[theorem]{Definition}
\newtheorem{proposition}[theorem]{Proposition}
\newtheorem{lemma}[theorem]{Lemma}
\newtheorem{corollary}[theorem]{Corollary}
\newtheorem{remark}[theorem]{Remark}
\def\al{\aligned}
\def\eal{\endaligned}
\def\be{\begin{equation}}
\def\ee{\end{equation}}
\def\lab{\label}
\def\a{\alpha}
\def\e{\epsilon}
\def\M{{\bf M}}
\def\al{\aligned}
\def\pa{\partial}
\def\d{\nabla}
\def\lam{\lambda}
\def\v{\textrm{Vol}}
\def\mb{\mathbb}
\def\mc{\mathcal}
\DeclareMathOperator{\Ric}{Ric}
\DeclareMathOperator{\vol}{Vol}
\DeclareMathOperator{\Hess}{Hess}
\numberwithin{equation}{section}
\let\oldtocsection=\tocsection
\let\oldtocsubsection=\tocsubsection
\let\oldtocsubsubsection=\tocsubsubsection
\renewcommand{\tocsection}[2]{\hspace{0em}\oldtocsection{#1}{#2}}
\renewcommand{\tocsubsection}[2]{\hspace{2em}\oldtocsubsection{#1}{#2}}
\renewcommand{\tocsubsubsection}[2]{\hspace{2em}\oldtocsubsubsection{#1}{#2}}
\begin{document}

\title[]{New Volume Comparison results and Applications to degeneration of
Riemannian metrics }
\author{Qi S. Zhang and Meng Zhu}
\address{Department of Mathematics, University of California, Riverside, Riverside, CA 92521, USA}
\email{qizhang@math.ucr.edu}
\address{Department of Mathematics, East China Normal University, Shanghai 200241, China - and Department of Mathematics, University of California, Riverside, Riverside, CA 92521, USA}
\email{mzhu@math.ucr.edu}
\date{}

\begin{abstract}
We consider a condition on the Ricci curvature involving vector fields, which is broader than the Bakry-\'Emery Ricci condition. Under this condition
volume comparison, Laplacian comparison, isoperimetric inequality and gradient bounds
are proven on the manifold.

Specializing to the Bakry-\'Emery Ricci curvature condition,  we initiate an approach to work on the original manifold,  which yields, under a weaker than usual assumption,
the results mentioned above
 for the {\it original manifold}. These results are different from most well known ones in the literature where the conclusions are made on the weighted manifold instead.

Applications on convergence and degeneration of Riemannian metrics under this curvature condition are given. To this effect, in particular for the Bakry-\'Emery Ricci curvature condition, the gradient of the potential function is allowed to have singularity of order
close to $1$ while the traditional method of weighted manifolds allows bounded gradient.
This approach enables us to extend some of the results in the papers \cite{Co}, \cite{ChCo2}, \cite{zZh}, \cite{TZ} and \cite{WZ}.
The condition also covers general Ricci solitons instead of just gradient Ricci solitons.
\end{abstract}
\maketitle

\tableofcontents

\section{Introduction}

The volume comparison theorem by Bishop-Gromov is an essential tool in differential geometry and
analysis on manifolds. It states that if the Ricci curvature is bounded from below by a constant, the ratio  between the volume of a geodesic ball and the volume of the ball of the same radius in the model space is a non-increasing function of the radius.
However, in many situations, the point-wise lower bound on the Ricci curvature is not available. Hence many efforts are made to relax this assumption. In the paper \cite{PeWe1}, Petersen and Wei obtained a bound on the above volume ratio under the assumption that the norm of the
negative part of the Ricci curvature is an $L^p$ function with $p>n/2$, where $n$ is the dimension of the manifold. The bound approaches $1$ when the radii of the balls converge to $0$.  Another successful approach of relaxing the Ricci lower bound condition is
 to consider a
lower bound of the Bakry-\'Emery Ricci curvature  (\cite{BE}).  A typical condition is
\be\label{BERicci}
\Ric+\textrm{Hess} L\geq - \lam g
\ee for a nonnegative constant $\lam$ and a smooth function $L$ such that $L$ and/or $|\nabla L|$ is bounded.
Under this condition, for the weighted manifold $(\M,   dvol= e^{-L} dg)$,  several authors derived Laplacian comparison theorems for the weighted Laplacian $\Delta - \nabla L \cdot\nabla$  and/or bounds on the
volume ratio of weighted balls.  See Qian \cite{Q}, Bakry and Qian \cite{BQ},  X.D. Li  \cite{xLi} and Wei and Wylie \cite{WW}.  All of these results have led to many applications which can be found in the forward references in MathSciNet etc.
Their results also yield certain bounds on volumes of geodesic balls on the original manifold involving $\sup L$ and $\inf L$, or $\nabla L$. However,
monotonicity or almost monotonicity of volume ratios on the original geodesic balls do not follow directly in general.

In this paper,  we consider  a Ricci curvature condition involving vector fields, which is broader than the Bakry-\'Emery Ricci curvature
condition \eqref{BERicci}. It is also more general than curvature dimension inequality.  More specifically, we impose a lower bound condition on a modified Ricci curvature
\be
\lab{ric.vec}
\Ric+\frac{1}{2}\mc{L}_{V} g
\ee where $V$ is a smooth vector field, and $\mc{L}_Vg$ is the Lie derivative
of $g$ in the direction of $V$.  In local coordinates, \eqref{ric.vec} can be written as
\[
R_{ij} + \frac{1}{2} (\nabla_i V_j  + \nabla_j V_i),
\] where $V=V^i \partial_i$ and $V_i=g_{ij} V^j$.
If $V= \nabla L$ for a smooth function $L$, then this is nothing but the Bakry-\'Emery Ricci curvature. The study of Ricci solitons and Einstein field equation provide strong motivation for considering this kind of Ricci condition. Let us recall that a Riemannian manifold $({\M}, g)$ is called a Ricci soliton if there exists
a smooth vector field $V$ and a constant $\lambda$ such that
\[
\Ric+\frac{1}{2}\mc{L}_{V} g + \lambda g=0.
\] Also Einstein's field equation can be expressed as
\be
\lab{EFE}
R_{ij} - \frac{1}{2} R g_{ij} + \lambda g_{ij}
=  \frac{ 8 \pi G}{c^4} T_{ij}
\ee where $R$ is the scalar curvature, $G$ is Newton's gravitational
constant and $c$ is the speed of light in vacuum.
$T_{ij}$ is the stress-energy tensor which may take the special form
$\nabla_i V_j  + \nabla_j V_i$  or $\nabla^2 L$.  Another motivation comes from the study of the so called modified Ricci
flows where the Ricci curvature is modified by a vector field in the same manner as here.
In addition, this kind of conditions have been used in other context such as
\cite{D},
\cite{FG} and \cite{W} to obtain diameter bound or bounds on fundamental groups
for manifolds .

An underlining feature of the paper is that our emphasis is on the original manifolds and the
standard Laplacian. By assuming the modified Ricci curvature \eqref{ric.vec} having a lower bound,
volume comparison, Laplacian comparison and gradient bounds are proven for the manifold. This approach has the advantage that even for the Bakry-\'Emery Ricci curvature, i.e, $V= \nabla L$, it can be shown that under a weaker than usual condition on the potential function that much of the volume ratio bounds, Laplacian comparison theorem and gradient bounds still hold for the {\it original manifolds}. For instance, the results here provide almost monotonicity of volume ratios in the spirit of Bishop-Gromov volume comparison on the original manifold without the boundedness of the $|\nabla L|$. These properties are necessary for proving convergence results on general manifolds, Ricci solitons and other applications which will be presented later in the paper. Here are samples of the main theorems.

\begin{theorem}[Tangent cones are metric cones, Theorem \ref{thtcmc} below]
Let $(\M_i, g_i)$ be a sequence of Riemannian manifolds satisfying \eqref{basichypo1} and $\eqref{basichypo2}$ for some
$\lam \ge 0,\ K \ge 0,\ \a \in [0, 1), $ and $\rho>0$ and some smooth vector field $V_i$, i.e.,
\[
\Ric_{ g_i} +\frac{1}{2}\mc{L}_{V_i} g_i\geq - \lam g_i; \quad
|V_i|(\cdot)\leq \frac{K}{d_i(\cdot, O_i)^\a}, \ \text{for some} \ O_i \in \M_i;\
 \vol (B_{ g_i}(\cdot, 1)) \ge \rho.
\]
Suppose that $(\M_i,d_i)\xrightarrow{d_{GH}}(Y,d)$ in Gromov-Hausdorff topology. Then any tangent cone of $Y$ is a metric cone.
In particular, any tangent cone of a single manifold satisfying the above condition is a metric cone.
\end{theorem}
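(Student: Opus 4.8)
The plan is to follow the scheme of Cheeger and Colding for volume non-collapsed limits \cite{ChCo2}, substituting for the classical Bishop--Gromov comparison and the associated almost rigidity theorems the versions proven earlier in this paper under the modified Ricci condition \eqref{ric.vec} together with the singular bound on $V$. The first step is to pass the volume comparison to the limit: since every $(\M_i,g_i)$ satisfies the hypotheses with the \emph{same} constants $\lam,K,\a,\rho$, the (almost) monotonicity of volume ratios established above holds on each $\M_i$ with uniform constants, so the renormalized limit measure on $Y$ inherits the property that $r\mapsto \vol(B(y,r))/V_\lam(r)$ is (almost) non-increasing, where $V_\lam(r)$ denotes the volume of the $r$-ball in the corresponding model. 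In particular the density $\Theta(y):=\lim_{r\to0}\vol(B(y,r))/(\omega_n r^n)$ exists at every $y\in Y$, and the same comparison gives $\vol(B(y,r))\ge c(n,\lam,K,\a)\,\rho\,r^n$ for $r\le1$, so $(Y,d)$ is non-collapsed with uniform local doubling.

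The second, decisive step is a scaling analysis, which is where the hypothesis $\a<1$ is used. Rescaling $g\mapsto\tilde g=r^{-2}g$ and replacing $V$ by $\tilde V:=r^2V$ one has $\mc{L}_{\tilde V}\tilde g=\mc{L}_V g$ and $\Ric_{\tilde g}=\Ric_g$, hence $\Ric_{\tilde g}+\tfrac12\mc{L}_{\tilde V}\tilde g=\Ric_g+\tfrac12\mc{L}_V g\ge-\lam g=-(\lam r^2)\tilde g$, while $|\tilde V|_{\tilde g}(x)=r\,|V|_g(x)\le rK/\dist_g(x,O)^{\a}=r^{1-\a}K/\dist_{\tilde g}(x,O)^{\a}$. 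Thus as $r\to0$ the curvature defect $\lam r^2$ \emph{and} the constant $r^{1-\a}K$ in the vector-field bound both tend to $0$ while $\a$ is unchanged, and the non-collapsing bound of the previous paragraph rescales to a fixed positive constant $\rho'$ depending only on $n,\lam,K,\a,\rho$. By Gromov precompactness every tangent cone $Y_y$ of $Y$ at a point $y$ exists, and a diagonal argument realizes $Y_y$ as a pointed Gromov--Hausdorff limit of rescalings $(\M_{i_k},r_k^{-2}g_{i_k},p_k)$ satisfying the hypotheses with parameters $\lam r_k^2\to0$, $r_k^{1-\a}K\to0$, the same $\a$, and the fixed $\rho'$.

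Next, by the volume continuity \cite{Co} under this (non-collapsed) convergence, for each fixed $s>0$ the volume ratio of $B(y,s)$ computed in $(Y,r_k^{-1}d)$ equals $\vol(B(y,r_k s))/(\omega_n(r_k s)^n)$, which converges to $\Theta(y)$ by the monotonicity above; hence on $Y_y$ one has $\vol(B(\cdot,s))/(\omega_n s^n)\equiv\Theta(y)$ at the vertex for all $s>0$. It then remains to invoke `volume cone implies metric cone' in the present setting: $Y_y$ is a non-collapsed limit of manifolds whose modified Ricci defect and whose vector-field constant both tend to zero, so the quantitative almost rigidity theorem proven in this paper — the analogue of the one in \cite{ChCo2}, in which every error term generated by $V$ carries a factor $O(r_k^{1-\a})$ and therefore vanishes in the limit — forces $Y_y$ to be a metric cone with vertex $y$. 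The assertion for a single manifold is the case $\M_i\equiv\M$ of the above.

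The main obstacle is precisely this last ingredient, i.e.\ establishing `volume cone implies metric cone' under \eqref{ric.vec} with a $V$ only satisfying $|V|\le K/\dist(\cdot,O)^{\a}$, $\a<1$. This is not formal: it requires rerunning the Cheeger--Colding machinery — a segment inequality, the Bochner/Reilly-type argument showing that equality in the volume comparison forces $\Hess(\tfrac12\dist(y,\cdot)^2)$ to be close to $g$ in an integral sense, and the resulting warped-product rigidity — with the Laplacian comparison and gradient estimates for \eqref{ric.vec} replacing the classical ones, while carefully tracking that every contribution of $V$ is controlled by $r^{1-\a}K$, which tends to $0$ because $\a<1$. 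Once that almost rigidity is available, the deduction above is routine.
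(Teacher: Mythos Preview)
Your proposal is correct and follows the same route the paper takes: the paper's own argument for this theorem is a one-line reduction to the cone rigidity theorem (Theorem~\ref{thm cone rigidity}) together with Lemma~\ref{lem cone rigidity}, invoking the standard Cheeger--Colding mechanism from \cite{ChCo2}, and you have spelled out precisely that mechanism, including the key scaling computation showing that under $g\mapsto r^{-2}g$ the effective parameters become $\lam r^2$ and $Kr^{1-\a}$, both tending to $0$ since $\a<1$. One cosmetic point: the paper's volume comparison is stated against $r^n$ with an explicit multiplicative error $e^{C(\lam r^2+Kr^{1-\a})}$ rather than against a model volume $V_\lam(r)$, so the ``almost monotonicity'' you invoke should be read in that form; this does not affect the argument, since the error factor tends to $1$ as $r\to 0$ and hence the density $\Theta(y)$ still exists and the hypothesis of Lemma~\ref{lem cone rigidity} is satisfied with $\delta'\to 0$ on the rescaled approximants.
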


\begin{theorem}[Size estimate of the singular set, Theorem \ref{thsizesc} below]
Let $(\M_i, g_i)$ be a sequence of Riemannian manifolds satisfying \eqref{basichypo1} and \eqref{basichypo2} for some $\lam,\ K,\ \a,$ and $\rho$. Suppose that $(\M_i,d_i)\xrightarrow{d_{GH}}(Y,d)$ in Gromov-Hausdorff topology. Then $dim S(Y) \le n-2$. Here, $S(Y)$ is the singular set of $Y$ and $dim$ stands for the Hausdorff dimension.
\end{theorem}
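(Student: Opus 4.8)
The plan is to run the Cheeger--Colding stratification argument for non-collapsed limit spaces, as in \cite{ChCo2}, with the comparison estimates of the earlier sections --- volume and Laplacian comparison, gradient bounds, and the almost-splitting estimates they yield (the analogues, under \eqref{basichypo1}--\eqref{basichypo2}, of the Cheeger--Colding segment inequality and almost-splitting theorem) --- replacing the classical consequences of a Ricci lower bound, and with Theorem~\ref{thtcmc} supplying the metric-cone structure of tangent cones. The first point is that this machinery applies at every scale: under a blow-up, rescaling the metric by $s^{-2}$ ($s\to0$) and the vector field $V_i$ by $s^{2}$, hypothesis \eqref{basichypo1} transports with $\lambda$ replaced by $\lambda s^{2}$, while the bound $|V_i|\le K d_i(\cdot,O_i)^{-\alpha}$ transports with $K$ replaced by $s^{1-\alpha}K$; since $\alpha<1$, both constants tend to $0$, so every tangent cone of $Y$, and every iterated tangent cone, is again a non-collapsed pointed Gromov--Hausdorff limit of $n$-manifolds satisfying \eqref{basichypo1}--\eqref{basichypo2} with uniform constants, and in particular is a metric cone of Hausdorff dimension exactly $n$ (the dimension being $n$ because non-collapsing, together with the almost-monotonicity of volume ratios, yields Colding-type volume continuity, cf.\ \cite{Co}). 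Write $\mathcal{R}(Y)=\{y\in Y:\ \text{some tangent cone at }y\text{ is isometric to }\IR^{n}\}$ and $S(Y)=Y\setminus\mathcal{R}(Y)$.

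Next I would introduce the stratification $S^{0}(Y)\subseteq S^{1}(Y)\subseteq\cdots\subseteq S^{n-1}(Y)=S(Y)$, where $y\in S^{k}(Y)$ iff no tangent cone at $y$ splits off an isometric Euclidean factor $\IR^{k+1}$, and prove $\dim S^{k}(Y)\le k$ by the Federer--Almgren dimension-reduction argument transcribed from \cite{ChCo2}: combining Theorem~\ref{thtcmc} with the cone-splitting principle (a tangent cone of a metric cone, based at a point other than the vertex, splits off a line), one shows that at a point of $S^{k}(Y)$ where that set would carry positive $\mathcal{H}^{k+1}$-density an iterated blow-up would produce a cone that splits off $\IR^{k+1}$, a contradiction. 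Here the quantitative almost-splitting that accompanies Theorem~\ref{thtcmc} is exactly what upgrades the blow-up directions to genuine Euclidean factors.

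The crucial step, and the one I expect to be the main obstacle, is the ``gap'' $S^{n-1}(Y)=S^{n-2}(Y)$, i.e.\ the absence of a codimension-one stratum. Let $y\in S^{n-1}(Y)\setminus S^{n-2}(Y)$; then some tangent cone at $y$ splits isometrically as $\IR^{n-1}\times N$, and since this cone has Hausdorff dimension $n$, $N$ is a one-dimensional metric cone. Among length spaces the only such cones are $\IR$ and the half-line $[0,\infty)$, and I claim the half-line cannot occur. Choosing $\M_i\ni y_i\to y$ and scales $r_i\to0$ with $(\M_i,r_i^{-2}g_i,y_i)$ converging to $\IR^{n-1}\times N$ based at its vertex, the almost-splitting estimates produce, on the relevant balls, an almost-isometric splitting $\M_i\approx\IR^{n-1}\times N_i$ in which $N_i$ is a non-collapsed one-dimensional factor (an interval or a circle up to small error) and $y_i$ corresponds to a point $\bar y_i$ of $N_i$ lying at definite distance from any endpoint of $N_i$: if an endpoint of $N_i$ lay in the interior of the ball around $\bar y_i$, then near the corresponding point of $\M_i$ the manifold would look like a half-space, contradicting that $\M_i$ is boundaryless. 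Hence the ball of $N_i$ about $\bar y_i$ is two-sided, so in the limit $N$ contains a two-sided segment around its vertex; this excludes $[0,\infty)$ and forces $N=\IR$. But then the tangent cone at $y$ is $\IR^{n}$, so $y\in\mathcal{R}(Y)$, contradicting $y\in S(Y)$.

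Combining the two parts, $\dim S(Y)=\dim S^{n-1}(Y)=\dim S^{n-2}(Y)\le n-2$, which is the assertion --- for $n\ge2$; the case $n=1$ is vacuous, and in every case the single point $o=\lim_{i}O_i$ is zero-dimensional and cannot affect the estimate. Beyond transcribing \cite{ChCo2}, the genuine work is twofold: establishing the uniformity in $i$ of all the comparison and almost-splitting estimates despite the singularity of $V_i$ at $O_i$ --- this is where the hypothesis $\alpha<1$ really enters and is the content of the earlier sections --- and checking, as sketched above, that blow-ups preserve \eqref{basichypo1}--\eqref{basichypo2} with controlled constants, so that Theorem~\ref{thtcmc} and the stratification apply to every iterated tangent cone.
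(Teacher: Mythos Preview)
Your overall strategy --- run the Cheeger--Colding stratification of \cite{ChCo2}, check that the estimates of the earlier sections replace the classical Ricci-lower-bound inputs, and verify that blow-ups preserve \eqref{basichypo1}--\eqref{basichypo2} with $\lambda\to\lambda s^{2}$ and $K\to s^{1-\alpha}K$ --- is exactly what the paper does, and your scaling computation is correct. The paper also reduces everything to the single ``gap'' assertion $S^{n-1}(Y)=S^{n-2}(Y)$, i.e.\ to showing that no tangent cone is isometric to the half-space $\IR^{n-1}\times[0,\infty)$.

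Where you diverge from the paper is in how you propose to rule out the half-space, and this step in your proposal is not yet a proof. You write that almost-splitting yields $\M_i\approx\IR^{n-1}\times N_i$ with $N_i$ a one-dimensional factor, and that an endpoint of $N_i$ near $\bar y_i$ would force $\M_i$ to ``look like a half-space, contradicting that $\M_i$ is boundaryless.'' But almost-splitting only gives a Gromov--Hausdorff approximation, not an actual fibration with one-dimensional fibers; there is no $N_i$ sitting inside $\M_i$ whose boundary points you can examine, and a boundaryless manifold can perfectly well have balls that are GH-close to half-balls --- that is precisely the configuration you are trying to exclude. Converting the intuition ``manifolds have no boundary'' into a contradiction requires a genuine topological argument.

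The paper follows \cite{ChCo2} here and uses a mod-$2$ degree argument. One invokes Theorem~A.1.8 of \cite{ChCo2}, whose two ingredients are Reifenberg's theorem (Theorem~A.1.1 there, valid for general metric spaces, hence available in the present setting) and the ``volume-close implies GH-close to Euclidean'' statement (Theorem~A.1.5 there), the latter being exactly Theorem~\ref{thvolclosetoghclose} above. These produce, near the putative half-space point, an embedding $\hat f$ whose mod-$2$ degree must be~$1$; on the other hand the half-space structure forces the degree to be~$0$ (cf.\ p.~437 of \cite{ChCo2}), giving the contradiction. Your sketch should be replaced by this argument, or by an equivalent topological obstruction; the heuristic about $N_i$ having no endpoints does not close the gap on its own.
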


The following is an outline of the proof.  We first prove the volume and Laplacian comparison results under the Ricci vector field condition. Then we will move according to the road map provided in the papers \cite{ChCo1} and \cite{ChCo2}. Namely, we need to take  a number of intermediate steps of proving gradient bounds for harmonic functions, Sobolev inequality, existence of good cut off functions, segment inequality, excess estimate, barrier functions and existence of $\e$-splitting maps. With the exception of gradient bound and segment inequality, the proofs
of these results will involve new ingredients drawing from recent literature, which
 are needed since many of the model functions used in comparison arguments are
not available here.  The main assumption on the vector field $V$ in this paper is $|V(\cdot)| \le \frac{K}{d(\cdot, O)^\a}$ , for some $O \in \M$ and $\a \in [0, 1)$. If $\a=0$, i.e., the vector field $V$ is bounded, then the proof can be shortened considerably. Some results
can also be strengthened.

\section{New volume comparison theorems}

\subsection{Ricci vector field condition}

Let $(\M, g)$ be an $n$ dimensional Riemannian manifold with a  fixed base point $O\in\M$. In this subsection,
our basic assumption on the Ricci curvature tensor is
\be\label{riccond}
\Ric+\frac{1}{2}\mc{L}_{V} g\geq - \lam g
\ee
for some constant $\lam \geq0$ and smooth vector field $V$ which satisfies the following condition:
\be\label{condition V}
|V|(y)\leq \frac{K}{d(y,O)^\a}
\ee
for any $y\in\M$.  Here $d(y,O)$ represents the distance from $O$ to $y$, and $K\geq0$ and $0\leq\a<1$ are constants. Intuitively, the basic assumption \eqref{condition V} on the vector field $V$ is that it has singularity of order less than $1$.  Our assumptions include the popular special cases such as \eqref{BERicci}, i.e., Bakry-\'Emery Ricci curvature $\Ric+\Hess L$ being bounded from below, with bounded $|\nabla L|$, and gradient Ricci solitons $\Ric +\Hess L=\lam g$ with just bounded potential $L$. In both of these special cases, we have $V=\d L$.  Moreover, due to the special structure of gradient Ricci solitons, local boundedness on the potential function $L$ automatically implies local bounds on its gradient.

Let us present the Laplacian comparison theorem on which the paper is based.  The
proof replies on a simple but key observation that the Lie derivative or Hessian become
ordinary derivative or second derivative in the radial direction.

We should mention a related result by Qian \cite{Q} who long time ago obtained a Laplacian comparison theorem under a Ricci curvature condition that is modified
 by a vector field:
 \[
 \Ric+\frac{1}{2}\mc{L}_{V} g -\frac{1}{N-n} V \otimes V \ge - \lam g.
 \]Here $V$ is a vector field, $n$ is the dimension of the manifold and $N$ is a number strictly greater than $n$.
 However, the result depends on $N$ which can not be  taken  as $n$, and the Laplace operator there is also modified accordingly. In the following, we will use $C(a_1,a_2,\cdots, a_k)$ to denote constants depending on parameters $a_1, a_2,\cdots, a_k$.

\begin{proposition}[Laplacian comparison]\label{prop laplacian comparison}
Assume that \eqref{riccond} and \eqref{condition V} hold. Let $s=d(y,x)$ be the distance from any point $y$ to some fixed point $x$,
and $\gamma:\ [0,s]\rightarrow\M$ a normal minimal geodesic with
 $\gamma(0)=x$ and $\gamma(s)=y$.
Then in the distribution sense,
\be\label{Laplace comparison}
\Delta s-\frac{n-1}{s}\leq \frac{\lam}{3}s+<V, \d s>+\frac{C(\a)K}{s^\a}.
\ee
\end{proposition}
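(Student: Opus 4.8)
The plan is to run the classical Bochner/Riccati argument for the Laplacian of the distance function along a minimal geodesic, but keeping track of the correction coming from the vector field $V$. Fix $x$ and a point $y$, let $\gamma:[0,s]\to\M$ be a unit-speed minimal geodesic from $x$ to $y$, and write $r(t)=t$ so that $\nabla r=\dot\gamma$ along $\gamma$. Set $\varphi(t)=\Delta r\big|_{\gamma(t)}$. The first step is the standard one: the Bochner formula applied to $r$ (away from the cut locus) gives the Riccati inequality
\[
\varphi'(t)+\frac{\varphi(t)^2}{n-1}\le -\Ric(\dot\gamma,\dot\gamma),
\]
where we used $|\Hess r|^2\ge (\Delta r)^2/(n-1)$. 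Now invoke the hypothesis \eqref{riccond}: $\Ric(\dot\gamma,\dot\gamma)\ge -\lambda -\tfrac12(\mathcal L_Vg)(\dot\gamma,\dot\gamma)$. The key observation flagged in the text is that $(\mathcal L_V g)(\dot\gamma,\dot\gamma)=2\langle\nabla_{\dot\gamma}V,\dot\gamma\rangle=2\frac{d}{dt}\langle V,\dot\gamma\rangle$ along the geodesic, since $\nabla_{\dot\gamma}\dot\gamma=0$. Writing $h(t):=\langle V,\dot\gamma\rangle\big|_{\gamma(t)}=\langle V,\nabla r\rangle$, we get
\[
\varphi'(t)+\frac{\varphi(t)^2}{n-1}\le \lambda + h'(t).
\]

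The second step is to integrate this differential inequality. Comparing with the model where equality holds and the curvature term is absent, set $\psi(t)=\varphi(t)-h(t)-\frac{n-1}{t}$; the term $\frac{n-1}{t}$ is exactly the solution of $u'+u^2/(n-1)=0$ with the right blow-up at $t=0$. A short computation turns the inequality into a differential inequality for $\psi$ of the form $\psi'+(\text{nonnegative})\,\psi\le \lambda + (\text{error terms involving }h)$, which upon integration from $0$ to $s$ — using that $\varphi(t)-\frac{n-1}{t}\to 0$ and $h(t)\to\langle V,\nabla r\rangle(x)$ as $t\to 0$, with the initial contributions controlled — yields
\[
\varphi(s)-\frac{n-1}{s}\le h(s)+\frac{\lambda}{3}s+\text{(term coming from }h\text{)}.
\]
The constant $\tfrac13$ in front of $\lambda s$ is the signature that one integrates $\lambda t$ against the appropriate weight over $[0,s]$ and renormalizes, rather than getting $\lambda s$ directly; I would track the weights carefully to land on exactly $\lambda s/3$.

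The third step is to estimate the residual $h$-dependent term by the hypothesis \eqref{condition V}. After integration the leftover is an integral of the form $\int_0^s (\text{weight})\, h(t)\,dt$ (the $h(s)$ itself already appears on the right as $\langle V,\nabla s\rangle$), and since $|h(t)|\le |V|(\gamma(t))\le K/d(\gamma(t),O)^\alpha$, one needs a bound on $\int_0^s d(\gamma(t),O)^{-\alpha}\times(\text{weight})\,dt$. Here is where I expect the main obstacle: $d(\gamma(t),O)$ can be small, so the integrand is genuinely singular, and a crude bound fails. The resolution is that $\alpha<1$, so $t^{-\alpha}$ is integrable near $0$; more precisely, by the triangle inequality along the geodesic one controls $d(\gamma(t),O)$ from below in terms of $t$ and $d(x,O)$ on the relevant portion, and after splitting the integral (the piece where $\gamma$ is near $O$ versus away from it) and using $\int_0^s t^{-\alpha}\,dt=s^{1-\alpha}/(1-\alpha)$ together with the weight behavior, the whole thing is bounded by $C(\alpha)K\, s^{-\alpha}$ — the stated form. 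Finally, the passage from the smooth inequality (valid where $s$ is smooth, i.e.\ off the cut locus) to the distributional inequality on all of $\M$ is the usual Calabi trick: the cut locus has measure zero and $s$ is locally Lipschitz, so the pointwise inequality on the good set plus the barrier argument upgrades it to hold in the sense of distributions. I would remark that no regularity of $V$ beyond smoothness is used, and that when $\alpha=0$ the last term is just a constant $C K$, recovering the bounded-$|\nabla L|$ case.
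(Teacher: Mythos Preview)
Your proposal is correct and follows essentially the same route as the paper: Bochner/Riccati inequality, the key observation that $(\mathcal L_V g)(\dot\gamma,\dot\gamma)=2\frac{d}{dt}\langle V,\dot\gamma\rangle$, an integration against the weight $t^2$, and then a case split on $d_0=d(x,O)$ to bound the residual integral $-\tfrac{2}{s^2}\int_0^s t\,h(t)\,dt$ by $C(\alpha)K s^{-\alpha}$. The only cosmetic difference is that the paper rewrites the Riccati inequality as $\tfrac{1}{t^2}\partial_t(t^2\Delta r)+\tfrac{1}{n-1}(\Delta r-\tfrac{n-1}{t})^2\le \tfrac{n-1}{t^2}-\Ric$ and integrates directly, whereas your substitution $\psi=\varphi-h-\tfrac{n-1}{t}$, once you use the integrating factor $t^2$ and drop the nonnegative square, yields exactly the same integrated inequality $\Delta s-\tfrac{n-1}{s}\le \tfrac{\lambda}{3}s+\langle V,\nabla s\rangle-\tfrac{2}{s^2}\int_0^s t\,h(t)\,dt$.
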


\proof  We may just prove the inequality at smooth points of $s$, since \eqref{Laplace comparison} can then be established in the distribution sense by the fact that the complement of the cut locus is star shaped.

From the Bochner formula, we have
\[
0=\frac{1}{2}\Delta|\d s|^2=|\d^2 s|^2 + <\d\Delta s, \d s> + \Ric(\d s, \d s).
\]
By using Cauchy-Schwarz inequality $|\d^2 s|^2\geq\frac{(\Delta s)^2}{n-1}$, it yields
\[
\frac{\pa}{\pa s}\Delta s+\frac{(\Delta s)^2}{n-1}\leq -\Ric(\d s, \d s),
\]
which is equivalent to
\be\label{d psi}
\frac{1}{s^2}\frac{\pa}{\pa s}(s^2\Delta s)+\frac{1}{n-1}(\Delta s-\frac{n-1}{s})^2\leq \frac{n-1}{s^2}- \Ric(\d s, \d s),
\ee
due to the fact that
\[
\al
\frac{\pa}{\pa s}\Delta s+\frac{(\Delta s)^2}{n-1}&=\frac{\pa}{\pa s}\Delta s+\frac{(\Delta s)^2}{n-1}-\frac{2}{s}\Delta s + \frac{n-1}{s^2} + \frac{2}{s}\Delta s - \frac{n-1}{s^2}\\
&=\frac{1}{s^2}\frac{\pa}{\pa s}(s^2\Delta s) - \frac{n-1}{s^2}+\frac{1}{n-1}(\Delta s-\frac{n-1}{s})^2.
\eal
\]
Multiplying both sides of \eqref{d psi} by $s^2$ and integrating from $0$ to $s$, we get
\be\label{Delta s}
\Delta s\leq \frac{n-1}{s}-\frac{1}{s^2}\int_0^st^2\Ric(\gamma'(t), \gamma'(t))dt.
\ee

At any point $\gamma(t)$,
we may choose an orthonormal frame $\{e_1,e_2,\cdots,e_n\}$
 with $e_1=\gamma'(t)$. Then from assumption \eqref{riccond}, we observe that
\[
\al
\Ric(\gamma'(t),\gamma'(t))&=\Ric(e_1,e_1)\\
&\geq-\lam-\frac{1}{2}\mc{L}_V g(e_1,e_1)\\
&=-\lam-<\d_{e_1}V, e_1>\\
&=-\lam-e_1<V,e_1> + <V, \nabla_{e_1} e_1>\\
&=-\lam-\frac{\pa}{\pa t}<V, \gamma'(t)>.
\eal
\] This is the key simplification that allows us to proceed.
Hence, \eqref{Delta s} becomes
\be\label{Delta s 1}
\al
\Delta s- \frac{n-1}{s}&\leq\frac{1}{s^2}\int_0^s \left[t^2\frac{\pa}{\pa t}<V, \gamma'(t)>+\lam t^2\right]dt\\
&\leq \frac{\lam}{3}s+\frac{1}{s^2}\left[t^2<V,\gamma'(t)>(\gamma(t))\big|_0^s-2\int_0^st<V,\gamma'(t)>dt\right]\\
&\leq \frac{\lam}{3}s+<V, \d s> -\frac{2}{s^2}\int_0^s t<V,\gamma'(t)>dt.
\eal
\ee
Denote $d_0=d(x,O)$.

\hspace{-.5cm}{\it Case 1:} If $s\leq d_0$, then by the triangle inequality, we have
\[
d(\gamma(t),O)\geq d_0-t.
\]
Hence,
\be\label{eq1}
\al
-\frac{2}{s^2}\int_0^s t<V,\gamma'(t)>dt&\leq \frac{1}{s^2}\int_0^s 2t\cdot\frac{K}{(d_0-t)^{\a}}dt\\
&\leq \frac{C(\a)K}{s}[-(d_0-t)^{1-\a}\big|_0^s]\\
&=\frac{C(\a)K}{s}[d_0^{1-\a}-(d_0-s)^{1-\a}]\\
&\leq\frac{C(\a)K}{s^{\a}}.
\eal
\ee
In the last step above, we have used the fact that
\be\label{Jensen}
s^{1-\a}+(d_0-s)^{1-\a}\geq d_0^{1-\a}.
\ee
Indeed, the above inequality can be obtained by setting $x=\frac{s}{d_0-s}$ in the inequality
\be\label{Cauchy-Schwarz}
x^{1-\a}+1 \geq(1+x)^{1-\a},\quad x>0.
\ee
By plugging \eqref{eq1} into \eqref{Delta s 1}, one has
\be\label{Laplacian comparison<d0}
\Delta s-\frac{n-1}{s}\leq \frac{\lam}{3}s+<V, \d s>+ \frac{C(\a)K}{s^{\a}}.
\ee

\hspace{-.5cm}{\it Case 2:} If $s>d_0$, then for any $d_0\leq t\leq s$, we have
$$d(\gamma(t),O)\geq t-d_0,$$
and
\be\label{eq2}
\al
-\frac{2}{s^2}\int_0^s t<V,\gamma'(t)>dt&\leq \frac{1}{s^2}\int_0^{d_0} 2t\cdot\frac{K}{(d_0-t)^{\a}}dt+\frac{1}{s^2}\int_{d_0}^s2t\cdot\frac{K}{(t-d_0)^{\a}}dt\\
&\leq \frac{C(\a)Kd_0}{s^2}d_0^{1-\a}+\frac{C(\a)K(s-d_0)^{1-\a}}{s}\\
&=\frac{C(\a)K}{s^{\a}}.
\eal
\ee

Therefore, from \eqref{eq2}, we deduce
\be\label{Laplacian comparison>d0}
\Delta s-\frac{n-1}{s}\leq \frac{\lam}{3}s+<V, \d s> + \frac{C(\a)K}{s^{\a}}.
\ee
This finishes the proof of the proposition. \qed \\

Here is the main result of this subsection.

\begin{theorem}[Volume comparison]\label{thm volume element comparison}
Assume that \eqref{riccond} and \eqref{condition V} hold.

(a). Let $w(s,\cdot)$ denote the volume element of the metric $g$ on $\M$ in geodesic polar coordinates. Then for any $0<s_1<s_2$, we have
\be\label{volume element ratio}
\frac{w(s_2,\cdot)}{s_2^{n-1}}\leq e^{C(\a)K s_2^{1-\a}+\lam s_2^2}\frac{w(s_1,\cdot)}{s_1^{n-1}}.
\ee
In particular, by letting $s_1\rightarrow 0$, we get
\be\label{volume element}
w(s,\cdot)\leq e^{C(\a)K s^{1-\a}+\lam s^2}s^{n-1},\quad \forall s>0,
\ee
and hence
\be\label{volume noninflation}
\vol(B(x,r))\leq e^{C(\a)K r^{1-\a}+\lam r^2}\frac{\vol(S^{n-1})}{n}r^n,\quad \forall r>0.
\ee
Here, $S^{n-1}$ is the unit sphere in $\mathbb{R}^n$.

(b). Suppose in addition that the volume non-collapsing condition holds
\be\label{noncollapsing-0}
\vol(B(x,1))\geq \rho,
\ee
for all $x\in\M$ and some constant $\rho>0$.
  Then for any $0<r_1<r_2\leq 1$, we have the volume ratio bound
\be\label{volume comparison}
\al
\frac{\vol(B(x,r_2))}{r_2^n}\leq e^{C(n,\lam,K,\a,\rho)[\lam(r_2^2-r_1^2)+K(r_2-r_1)^{1-\a}]}\cdot\frac{\vol(B(x,r_1))}{r_1^n}.\\
\eal
\ee

(c).  In particular, suppose that the Bakry-\'Emery Ricci curvature tensor $Ric+\Hess L$ satisfies
\be
Ric+\Hess L\geq - \lambda g,
\ee
and
\be\label{condition V=dL}
|\nabla L(y)| \le \frac{K}{d(y, O)^\a}
\ee
for all $y \in \M$, a fixed point $O \in \M$,
and constants $\lam\geq 0$, $K\geq0$, $\a \in [0, 1)$, and assume in addition that
 the noncollapsing condition (\ref{noncollapsing-0}) holds.
Then the conclusions of part (a) and (b) are true.

\end{theorem}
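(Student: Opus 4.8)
The plan is to derive all three parts from the Laplacian comparison of Proposition~\ref{prop laplacian comparison} together with the classical identity $\Delta s=\partial_s\log w(s,\theta)$, valid wherever $s=d(\cdot,x)$ is smooth, i.e.\ inside the cut locus. For part (a), fix a unit direction $\theta$ and the minimal geodesic $\gamma_\theta$ it generates from $x$; on $(0,c(\theta))$ integrate the pointwise form of \eqref{Laplace comparison} in $s$ from $s_1$ to $s_2$ to get
\[
\log\frac{w(s_2,\theta)/s_2^{n-1}}{w(s_1,\theta)/s_1^{n-1}}\le\frac{\lam}{6}\big(s_2^2-s_1^2\big)+\int_{s_1}^{s_2}\langle V,\gamma_\theta'(t)\rangle\,dt+\frac{C(\a)K}{1-\a}\big(s_2^{1-\a}-s_1^{1-\a}\big).
\]
The middle integral is controlled exactly as in the proof of Proposition~\ref{prop laplacian comparison}: $|\langle V,\gamma_\theta'(t)\rangle|\le|V|(\gamma_\theta(t))\le K\,d(\gamma_\theta(t),O)^{-\a}\le K|d_0-t|^{-\a}$ with $d_0=d(x,O)$, and a case split on the position of $d_0$ relative to $[s_1,s_2]$, together with the subadditivity of $u\mapsto u^{1-\a}$ (i.e.\ \eqref{Cauchy-Schwarz}), gives $\int_{s_1}^{s_2}\langle V,\gamma_\theta'\rangle\,dt\le C(\a)K(s_2-s_1)^{1-\a}$, a bound uniform in $\theta$ and $s_1$. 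This gives \eqref{volume element ratio}; letting $s_1\to0$ and using $w(s_1,\theta)/s_1^{n-1}\to1$ yields \eqref{volume element}; and integrating $w(s,\theta)$ over $s\in(0,\min(r,c(\theta))]$ and over $\theta\in S^{n-1}$, with the exponential factor in \eqref{volume element} monotone in $s$, gives \eqref{volume noninflation}.

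For part (b), set $\Lambda:=C(\a)Kr_2^{1-\a}+\lam r_2^2$, so \eqref{volume element ratio} gives $w(s,\theta)\le e^{\Lambda}(s/t)^{n-1}w(t,\theta)$ for $0<t\le s\le r_2$. Since $\mathbf{1}_{\{s<c(\theta)\}}$ is nonincreasing in $s$, the same inequality holds for the sphere densities $h(s):=\int_{S^{n-1}}w(s,\theta)\mathbf{1}_{\{s<c(\theta)\}}\,d\theta$, for which $\vol(B(x,r))=\int_0^r h$. Now run the Bishop--Gromov averaging: from $h(s)t^{n-1}\le e^\Lambda s^{n-1}h(t)$, integrating $t$ over $(0,r_1]$ bounds $h(s)\le n e^\Lambda s^{n-1}r_1^{-n}\vol(B(x,r_1))$ for $s\in[r_1,r_2]$; integrating that over $[r_1,r_2]$ and adding $\vol(B(x,r_1))$ yields
\[
\frac{\vol(B(x,r_2))}{r_2^n}\le\Big(1+(e^\Lambda-1)\big(1-r_1^n/r_2^n\big)\Big)\frac{\vol(B(x,r_1))}{r_1^n}\le e^{(e^\Lambda-1)(1-r_1^n/r_2^n)}\,\frac{\vol(B(x,r_1))}{r_1^n}.
\]
Since $r_2\le1$ one has $e^\Lambda\le e^{\lam+C(\a)K}$, hence $e^\Lambda-1\le C(\lam,K,\a)\big(Kr_2^{1-\a}+\lam r_2^2\big)$, and it remains to bound $(e^\Lambda-1)(1-r_1^n/r_2^n)$ by $C(n,\lam,K,\a)[\lam(r_2^2-r_1^2)+K(r_2-r_1)^{1-\a}]$. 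This is the one delicate step, and it must exploit the vanishing of $1-r_1^n/r_2^n$ as $r_1\to r_2$: when $r_2-r_1\le r_2/(2n)$, use $1-r_1^n/r_2^n\le n(r_2-r_1)/r_2$ together with $r_2(r_2-r_1)\le r_2^2-r_1^2$ and $(r_2-r_1)/r_2\le1$; when $r_2-r_1>r_2/(2n)$, use $1-r_1^n/r_2^n\le1$ together with $r_2^2\le C_n(r_2^2-r_1^2)$ and $r_2^{1-\a}\le C_n(r_2-r_1)^{1-\a}$. Either way \eqref{volume comparison} follows; the noncollapsing \eqref{noncollapsing-0} and the restriction $r_2\le1$ are what keep all the constants uniform (in fact the constant can be taken independent of $\rho$).

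For part (c), one only needs the identity $\frac12\mc L_{\nabla L}g=\Hess L$: then $\Ric+\Hess L\ge-\lam g$ and \eqref{condition V=dL} are precisely \eqref{riccond} and \eqref{condition V} with $V=\nabla L$, and parts (a), (b) apply verbatim. The main obstacle in the whole argument is the part (b) estimate: obtaining the error term in the sharp form $\lam(r_2^2-r_1^2)+K(r_2-r_1)^{1-\a}$ rather than the crude $\lam r_2^2+Kr_2^{1-\a}$ one would get by discarding $1-r_1^n/r_2^n$ requires the case analysis above; all the remaining steps are routine integrations of Proposition~\ref{prop laplacian comparison} together with standard facts about geodesic polar coordinates and cut loci.
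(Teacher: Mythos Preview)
Your proof of part (a) is essentially the paper's: integrate the Laplacian comparison along rays and handle the $\langle V,\gamma'\rangle$ term by the same case split on the position of $d_0=d(x,O)$. Part (c) is likewise identical.

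Part (b), however, takes a genuinely different route. The paper proceeds via a Petersen--Wei type differential inequality (their Lemma~\ref{proposition differential of volume ratio}),
\[
\frac{d}{dr}\Big(\frac{\vol(B(x,r))}{r^n}\Big)\le \frac{1}{r^{n+1}}\int_{S^{n-1}}\int_0^r s\,\psi\,w(s,\theta)\,ds\,d\theta,
\]
and then bounds the $|V|$ contribution to $\psi$ by an averaged $L^q$ estimate (their Lemma~\ref{lemma Lq bound of V}), which is where the noncollapsing constant $\rho$ enters. You instead feed part (a) directly into a Bishop--Gromov averaging argument, obtaining
\[
\frac{\vol(B(x,r_2))}{r_2^n}\le\Big(1+(e^\Lambda-1)\big(1-r_1^n/r_2^n\big)\Big)\frac{\vol(B(x,r_1))}{r_1^n},
\]
and then extract the stated error form via a clean dichotomy on $r_2-r_1$. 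Your argument is more elementary (it bypasses both auxiliary lemmas) and, as you note, yields a constant independent of $\rho$; in this sense it is slightly sharper than the paper's statement. The paper's approach, on the other hand, sets up machinery (Lemmas~\ref{proposition differential of volume ratio} and~\ref{lemma Lq bound of V}) that is reused later in the isoperimetric and gradient estimates, so its extra overhead is amortized.
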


\proof[Proof of part (a).]
 First of all, from \eqref{Laplace comparison} we have
\be\label{differential lnw}
\frac{\pa}{\pa s}\ln(w(s,\cdot))=\frac{w'(s,\cdot)}{w(s,\cdot)}=\Delta s\leq \frac{n-1}{s}+\frac{\lam}{3}s+<V, \d s> +\frac{C(\a)K}{s^\a}.
\ee

Recall that $d_0=d(x,O)$.

\hspace{-.5cm}{\it Case 1:} If $s_1<s_2\leq d_0$, by using the assumption \eqref{condition V} and triangle inequality, \eqref{differential lnw} can be transformed as
\[
\frac{\pa}{\pa s}\ln(w(s,\cdot))\leq \frac{n-1}{s}+\frac{\lam}{3}s+\frac{K}{(d_0-s)^\a}+\frac{C(\a)K}{s^\a}.
\]
Integrating both sides from $s_1$ to $s_2$ gives
\be\label{w comparison 1}
\al
\ln\frac{w(s_2,\cdot)}{w(s_1,\cdot)}&\leq \ln(\frac{s_2}{s_1})^{n-1}+\frac{\lam}{6}(s_2^2-s_1^2)+C(\a)K(s_2^{1-\a}-s_1^{1-\a})\\
&\quad +C(\a)K[(d_0-s_1)^{1-\a}-(d_0-s_2)^{1-\a}]\\
&\leq \ln(\frac{s_2}{s_1})^{n-1}+\frac{\lam}{6}s_2^2+C(\a)Ks_2^{1-\a}+C(\a)K(s_2-s_1)^{1-\a}\\
&\leq \ln(\frac{s_2}{s_1})^{n-1}+\frac{\lam}{6}s_2^2+C(\a)Ks_2^{1-\a},
\eal
\ee
where in the second last step we have used \eqref{Cauchy-Schwarz} for $x=\frac{s_2-s_1}{d_0-s_2}$ to derive
\[
(d_0-s_1)^{1-\a}-(d_0-s_2)^{1-\a}\leq (s_2-s_1)^{1-\a}.
\]
In particular, one has for $s_1\leq d_0$,
\be\label{w comparison 1a}
\ln\frac{w(d_0,\cdot)}{w(s_1,\cdot)}\leq \ln(\frac{d_0}{s_1})^{n-1}+\frac{\lam}{6}d_0^2+C(\a)Kd_0^{1-\a}.
\ee
\hspace{-.5cm}{\it Case 2:} If $d_0\leq s_1<s_2$, similarly since $$<V, \d s> \leq |V|(\gamma(s))\leq\frac{K}{(s-d_0)^\a},$$
we deduce from \eqref{differential lnw} that
\be\label{w comparison 2}
\al
\ln\frac{w(s_2,\cdot)}{w(s_1,\cdot)}&\leq \ln(\frac{s_2}{s_1})^{n-1}+\frac{\lam}{6}(s_2^2-s_1^2)+C(\a)K(s_2^{1-\a}-s_1^{1-\a})\\
&\quad +C(\a)K[(s_2-d_0)^{1-\a}-(s_1-d_0)^{1-\a}]\\
&\leq \ln(\frac{s_2}{s_1})^{n-1}+\frac{\lam}{6}s_2^2+C(\a)Ks_2^{1-\a}+C(\a)K(s_2-s_1)^{1-\a}\\
&\leq \ln(\frac{s_2}{s_1})^{n-1}+\frac{\lam}{6}s_2^2+C(\a)Ks_2^{1-\a}.
\eal
\ee
Especially, it implies that for $s_2\geq d_0$,
\be\label{w comparison 2a}
\ln\frac{w(s_2,\cdot)}{w(d_0,\cdot)}\leq \ln(\frac{s_2}{d_0})^{n-1}+\frac{\lam}{6}s_2^2+C(\a)Ks_2^{1-\a}.
\ee

\hspace{-.5cm}{\it Case 3:} If $s_1\leq d_0\leq s_2$, then by adding \eqref{w comparison 1a} to
\eqref{w comparison 2a}, we get
\be\label{w comparison 3}
\ln\frac{w(s_2,\cdot)}{w(s_1,\cdot)}\leq \ln(\frac{s_2}{s_1})^{n-1}+\frac{\lam}{6}s_2^2+C(\a)Ks_2^{1-\a}.
\ee
Now \eqref{volume element ratio} is a direct consequence of
\eqref{w comparison 1}, \eqref{w comparison 2}, and \eqref{w comparison 3}.
This proves part (a) of the theorem.\\

In order to prove part (b) of the theorem, we need two more elementary results besides Proposition \ref{prop laplacian comparison}.

First, we present a result which is essentially known
in \cite{PeWe1}. We give a proof for completeness.
\begin{lemma}\label{proposition differential of volume ratio}
Let $B(x,r)$ denote the geodesic ball in $\M$ centered at $x$ with radius $r$, and $\vol(B(x,r))$ denote the volume of $B(x,r)$. Then it holds that
\be\label{differential of volume ratio}
\frac{d}{dr}\left(\frac{\vol(B(x,r))}{r^n}\right)\leq \frac{1}{r^{n+1}}\int_{S^{n-1}}\int_0^r s\,\psi\, w(s,\theta)dsd\theta,
\ee
where $\psi=(\Delta s-\frac{n-1}{s})_+=\max\{\Delta s-\frac{n-1}{s}, 0\}$, $S^{n-1}$ denotes the unit sphere in $\mb{R}^n$ and $d\theta$ is its volume element.
\end{lemma}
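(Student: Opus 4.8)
The plan is to work in geodesic polar coordinates centered at $x$ and reduce the inequality to an integration by parts in the radial variable. Extend $w(s,\theta)$ by zero for $s$ beyond the cut distance $c(\theta)$ in the direction $\theta$, so that $\vol(B(x,r))=\int_{S^{n-1}}\int_0^r w(s,\theta)\,ds\,d\theta$. Since $r\mapsto\vol(B(x,r))$ is absolutely continuous, $\frac{d}{dr}\vol(B(x,r))=\int_{S^{n-1}}w(r,\theta)\,d\theta$ for a.e. $r$, and hence
\[
\frac{d}{dr}\!\left(\frac{\vol(B(x,r))}{r^n}\right)=\frac{1}{r^n}\int_{S^{n-1}}w(r,\theta)\,d\theta-\frac{n}{r^{n+1}}\int_{S^{n-1}}\int_0^r w(s,\theta)\,ds\,d\theta.
\]
Multiplying through by $r^{n+1}$, it suffices to show, for each fixed $\theta\in S^{n-1}$, that $r\,w(r,\theta)-n\int_0^r w(s,\theta)\,ds\le\int_0^r s\,\psi\,w(s,\theta)\,ds$, and then integrate over $S^{n-1}$.

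For the radial estimate I would use the identity $\partial_s w(s,\theta)=(\Delta s)\,w(s,\theta)$ — the first equality displayed in \eqref{differential lnw} — which is valid for $0<s<c(\theta)$. On the interval $\bigl(0,\,r\wedge c(\theta)\bigr)$ the function $w(\cdot,\theta)$ is smooth and satisfies $s\,w(s,\theta)\to0$ as $s\to0^+$ (because $w(s,\theta)\sim s^{n-1}$), so integrating by parts and using that $w(\cdot,\theta)$ vanishes past $c(\theta)$ gives
\[
\int_0^{r\wedge c(\theta)} s\Big(\Delta s-\tfrac{n-1}{s}\Big) w(s,\theta)\,ds=\int_0^{r\wedge c(\theta)}\!\bigl(s\,\partial_s w-(n-1)w\bigr)ds=(r\wedge c(\theta))\,w\bigl((r\wedge c(\theta))^-,\theta\bigr)-n\int_0^r w(s,\theta)\,ds .
\]
When $r\le c(\theta)$ the right-hand side equals $r\,w(r,\theta)-n\int_0^r w\,ds$; when $r>c(\theta)$ it equals $c(\theta)\,w(c(\theta)^-,\theta)-n\int_0^r w\,ds\ \ge\ r\,w(r,\theta)-n\int_0^r w\,ds$, since in the extended sense $w(r,\theta)=0$ and $w(c(\theta)^-,\theta)\ge0$. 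In either case, using $\Delta s-\frac{n-1}{s}\le\psi$ and $w\ge0$,
\[
r\,w(r,\theta)-n\int_0^r w(s,\theta)\,ds\ \le\ \int_0^r s\Big(\Delta s-\tfrac{n-1}{s}\Big) w(s,\theta)\,ds\ \le\ \int_0^r s\,\psi\, w(s,\theta)\,ds .
\]
Integrating over $\theta\in S^{n-1}$ and dividing by $r^{n+1}$ yields \eqref{differential of volume ratio}.

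The only delicate point is the behavior across the cut locus: the zero extension of $w$ is discontinuous at $s=c(\theta)$, so the integration by parts must be performed on $\bigl(0,\,r\wedge c(\theta)\bigr)$ and one must check that the surviving boundary term $c(\theta)\,w(c(\theta)^-,\theta)$ has the favorable sign — which it does, by nonnegativity of $w$. Equivalently, $\partial_s w$ carries a nonpositive Dirac mass at $s=c(\theta)$ in the distributional sense, which only strengthens the inequality. Everything else is the standard polar-coordinate computation; note that no curvature hypothesis enters this lemma beyond the smooth structure and the formula for $\partial_s w$, which is consistent with the statement that the result is essentially contained in \cite{PeWe1}.
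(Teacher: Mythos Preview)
Your argument is correct and follows essentially the same strategy as the paper: both work in geodesic polar coordinates, use the identity $\partial_s w=(\Delta s)\,w$, and bound $\Delta s-\frac{n-1}{s}$ by $\psi$. The only cosmetic difference is that the paper introduces the ratio $w(s,\theta)/s^{n-1}$, writes the integrand as $t^{n-1}r^{n-1}\bigl[w(r)/r^{n-1}-w(t)/t^{n-1}\bigr]$, applies the fundamental theorem of calculus to this ratio, and then swaps the order of integration, whereas you reduce directly to the radial inequality $r\,w(r,\theta)-n\int_0^r w\,ds\le\int_0^r s\,\psi\,w\,ds$ and establish it by a single integration by parts; your route is slightly more streamlined, and your explicit handling of the cut locus (via the nonnegative boundary term $c(\theta)\,w(c(\theta)^-,\theta)$) is a point the paper leaves implicit.
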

\begin{proof} Notice that
\[
\frac{d}{d r}\v(B(x,r))=\frac{d}{dr}\int_0^r\int_{S^{n-1}}w(t,\theta)d\theta dt=\int_{S^{n-1}}w(r,\cdot)d\theta.
\]
Therefore,
\[
\al
\frac{d}{dr}\left(\frac{\v(B(x,r))}{r^n}\right)&=\frac{r^n\int_{S^{n-1}}w(r,\theta)d\theta-nr^{n-1}\v(B(x,r))}{r^{2n}}\\
&=\frac{n\int_{S^{n-1}}w(r,\theta)d\theta\int_0^rt^{n-1}dt-nr^{n-1}\int_0^r\int_{S^{n-1}}w(t,\theta)d\theta dt}{r^{2n}}\\
&=\frac{n}{r^{2n}}\int_{S^{n-1}}\int_0^r\left[t^{n-1}w(r,\theta)-r^{n-1}w(t,\theta)\right]dtd\theta\\
&=\frac{n}{r^{2n}}\int_{S^{n-1}}\int_0^r t^{n-1}r^{n-1}\left[\frac{w(r,\theta)}{r^{n-1}}-\frac{w(t,\theta)}{t^{n-1}}\right]dtd\theta\\
&=\frac{n}{r^{n+1}}\int_{S^{n-1}}\int_0^r t^{n-1}\left(\int_t^r\frac{d}{ds}\left[\frac{w(s,\theta)}{s^{n-1}}\right]ds\right)dtd\theta\\
&=\frac{n}{r^{n+1}}\int_{S^{n-1}}\int_0^r t^{n-1}\left(\int_t^r \left[\frac{sw'(s,\cdot)-(n-1)w(s,\cdot)}{s^{n}}\right]ds\right)dtd\theta\\
&\leq \frac{n}{r^{n+1}}\int_{S^{n-1}}\left(\int_0^r\int_t^rt^{n-1}\frac{\psi w(s,\theta)}{s^{n-1}}dsdt\right)d\theta \quad
(\text{by} \  w'= w \Delta s),\\
&=\frac{n}{r^{n+1}}\int_{S^{n-1}}\left(\int_0^r\int_0^st^{n-1}\frac{\psi w(s,\theta)}{s^{n-1}}dtds\right)d\theta\\
&=\frac{1}{r^{n+1}}\int_{S^{n-1}}\int_0^r s\,\psi\, w(s,\theta)dsd\theta.
\eal
\]
\end{proof}

We will also need the following

\begin{lemma}\label{lemma Lq bound of V}
Suppose that in addition to the assumptions \eqref{riccond} and \eqref{condition V}, the following volume noncollapsing condition holds
\be\label{noncollapsing}
\vol(B(x,1))\geq \rho,
\ee
for all $x\in\M$ and some constant $\rho>0$.
Then for any $q\in(0,\frac{n}{\a})$ we have
\be\label{Lq bound of V}
\sup_{x\in\M,0< r\leq 1}r^{\a}||V||^*_{q,B(x,r)}\leq C(n,K,\lam,\a,\rho,q)K,
\ee
where
\[
||V||^*_{q,B(x,r)}=\left(\oint_{B(x,r)}|V|^q dg\right)^{1/q}=\left(\frac{1}{\vol(B(x,r))}\int_{B(x,r)}|V|^q dg\right)^{1/q}.
\]
\end{lemma}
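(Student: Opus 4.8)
The statement is trivial when $K=0$ (then $V\equiv 0$) or when $\a=0$ (then $|V|\le K$ and $r^{\a}\equiv 1$), so I will assume $K>0$ and $0<\a<1$. The plan is to fix $x\in\M$ and $0<r\le 1$, set $d_0=d(x,O)$, and distinguish whether $B(x,r)$ lies far from or close to the base point $O$. If $d_0\ge 2r$, then every $y\in B(x,r)$ satisfies $d(y,O)\ge d_0-r\ge r$, so \eqref{condition V} gives $|V|\le K/r^{\a}$ throughout $B(x,r)$, and hence $r^{\a}\|V\|^{*}_{q,B(x,r)}\le K$; this case is done.

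The substantive case is $d_0<2r$, in which $B(x,r)\subset B(O,3r)$ and therefore $\int_{B(x,r)}|V|^{q}\,dg\le K^{q}\int_{B(O,3r)}d(y,O)^{-q\a}\,dg$. I would estimate the right-hand side by passing to geodesic polar coordinates centered at $O$ and invoking the volume element bound \eqref{volume element} of Theorem \ref{thm volume element comparison}(a): for $0<s\le 3r\le 3$ the polar volume element obeys $w_O(s,\theta)\le e^{C(\a)Ks^{1-\a}+\lam s^{2}}s^{n-1}\le C(n,K,\lam,\a)\,s^{n-1}$. This reduces the matter to the one-dimensional integral $\int_{0}^{3r}s^{n-1-q\a}\,ds$, which converges precisely because $q<n/\a$ (equivalently $n-1-q\a>-1$) and equals a constant multiple of $r^{n-q\a}$. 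Consequently $\int_{B(x,r)}|V|^{q}\,dg\le C(n,K,\lam,\a,q)\,K^{q}\,r^{n-q\a}$.

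To turn this into a bound on the \emph{average} $\oint_{B(x,r)}|V|^{q}\,dg$ I need a matching volume lower bound $\vol(B(x,r))\ge c\,r^{n}$. I would derive it from part (a) of Theorem \ref{thm volume element comparison} alone — deliberately \emph{not} from part (b), which is itself deduced from the present lemma and whose use would make the argument circular. Concretely: fix $\theta\in S^{n-1}$; if the cut distance in direction $\theta$ does not exceed $r$ then the contributions of $\theta$ to $\vol(B(x,1))$ and to $\vol(B(x,r))$ coincide, while otherwise \eqref{volume element ratio} applied with $s_{1}\in[r/2,r]$ and $s_{2}=s\le 1$, averaged in $s_{1}$ and combined with the vanishing of the polar volume element past the cut locus, bounds the $\theta$-contribution to $\vol(B(x,1))$ by $C(n,K,\lam,\a)r^{-n}$ times the $\theta$-contribution to $\vol(B(x,r))$. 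Integrating over $\theta$ gives $\vol(B(x,1))\le C(n,K,\lam,\a)r^{-n}\vol(B(x,r))$, and then the noncollapsing hypothesis \eqref{noncollapsing} yields $\vol(B(x,r))\ge c(n,K,\lam,\a,\rho)\,r^{n}$ for $0<r\le 1$. Dividing the two estimates and taking $q$-th roots gives $\big(\oint_{B(x,r)}|V|^{q}\,dg\big)^{1/q}\le C(n,K,\lam,\a,\rho,q)\,K\,r^{-\a}$, which is \eqref{Lq bound of V}.

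The one delicate point — the ``hard part'' — is the interplay between the order $\a$ of the singularity of $V$ at $O$ and the dimension $n$: the radial integral $\int_{0}^{3r}s^{n-1-q\a}\,ds$ is finite exactly when $q<n/\a$, and this is why that restriction must be imposed. The remaining ingredients (the triangle-inequality estimate far from $O$, the volume element bound near $O$, and extracting the relative volume lower bound from part (a)) are routine, the one thing to watch being that one must not invoke part (b) of Theorem \ref{thm volume element comparison}.
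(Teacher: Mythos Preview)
Your proposal is correct and follows essentially the same approach as the paper: the same case split at $d(x,O)=2r$, the same use of the volume element bound \eqref{volume element} in polar coordinates about $O$ to control $\int_{B(O,3r)}d(y,O)^{-q\a}\,dg$, and the same derivation of the lower bound $\vol(B(x,r))\ge c\,r^{n}$ from the ratio estimate \eqref{volume element ratio} together with \eqref{noncollapsing}. Your explicit remark that one must use part (a) rather than part (b) to avoid circularity is apt; the paper does this implicitly via the substitution $s=rt$ in \eqref{noncollapsing 2}, which is a slightly cleaner packaging of the same idea as your direction-by-direction averaging.
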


\proof For any $0<r\leq 1$, according to the assumption \eqref{condition V},
\be\label{case 0}
\al
||V||^*_{q,B(x,r)}&\leq \left(\frac{1}{\v(B(x,r))}\int_{B(x,r)}\frac{K^q}{d(y,O)^{\a q}} dg(y)\right)^{1/q}.
\eal
\ee

\hspace{-.5cm}{\it Case 1:} If $d(x,O)\leq 2r$, then $B(x,r)\subset B(O,3r)$, then for any $0<q<\frac{n}{\a}$, we have
\be\label{case 1}
\int_{B(x,r)}\frac{1}{d(y,O)^{\a q}} dg(y)\leq \int_{B(O,3r)}\frac{1}{d(y,O)^{\a q}}dg(y)
\leq C(n,\a,q)e^{C(\a)Kr^{1-\a}+\lam r^2}r^{n-\a q}.
\ee
Indeed,  by  \eqref{volume element},  for any $\gamma<n$,  the following holds,
\be\label{integral of distance}
\al
\int_{B(O,r)}\frac{1}{d(y,O)^{\gamma}}dg(y)&=\int_{S^{n-1}}\int_{0}^{r}\frac{1}{s^{\gamma }}w(s,\theta)dsd\theta\\
&=\int_{S^{n-1}}\sum_{k=1}^{\infty}\int_{\frac{r}{2^{k}}}^{\frac{r}{2^{k-1}}}\frac{1}{s^{\gamma }}w(s,\theta)dsd\theta\\
&\leq \int_{S^{n-1}}\sum_{k=1}^{\infty}\frac{2^{k\gamma}}{r^{\gamma }}\int_{\frac{r}{2^{k}}}^{\frac{r}{2^{k-1}}}e^{C(\a)Ks^{1-\a}+\lam s^2}s^{n-1}ds d\theta\\
&\leq e^{C(\a)Kr^{1-\a}+\lam r^2}\int_{S^{n-1}}\sum_{k=1}^{\infty}\frac{2^{k\gamma }}{r^{\gamma }}\cdot\frac{r^{n}}{2^{n(k-1)}}d\theta\\
&\leq C(n,\gamma)e^{C(\a)Kr^{1-\a}+\lam r^2}r^{n-\gamma}.
\eal
\ee

Also, by \eqref{volume element} and the noncollapsing assumption \eqref{noncollapsing}, for $r\leq 1$ one has
\be\label{noncollapsing 2}
\al
\v(B(x,r))&=\int_{S^{n-1}}\int_0^r w(s,\theta)dsd\theta\\
&=r\int_{S^{n-1}}\int_0^1 w(rt,\theta)dtd\theta\\
&\geq r\int_{S^{n-1}}\int_0^1 \frac{1}{e^{C(\a)K+\lam}}r^{n-1}w(t,\theta)dtd\theta\\
&=\frac{\rho}{e^{C(\a)K+\lam}}r^n.
\eal
\ee
This and (\ref{case 1}) imply that
\be
\frac{1}{\v(B(x,r))} \int_{B(O,r)}\frac{1}{d(y,O)^{\a q}}dg(y)
\le
C(n,\a,q)e^{C(\a)Kr^{1-\a}+\lam r^2} \, \frac{e^{C(\a)K+\lam}}{\rho} \, r^{-\a q}.
\ee

\hspace{-.5cm}{\it Case 2:} If $d(x,O)> 2r$, then $d(y,O)\geq d(x,O)-d(x,y)\geq r$ for all $y\in B(x,r)$. Thus,
\be\label{case 2}
\int_{B(x,r)}\frac{1}{d(y,O)^{\a q}} dg(y)\leq r^{-\a q}\v(B(x,r)).
\ee

So in both cases we obtain
\[
||V||^*_{q,B(x,r)}\leq C(n,\lam,K,\a,\rho,q)Kr^{-\a}.
\]
This finishes the proof of the lemma. \qed \\

Now we are ready to prove part (b) of
Theorem \ref{thm volume element comparison}.

\hspace{-.5cm}{\it Proof of part (b).} Recall that $\psi=(\Delta s-\frac{n-1}{s})_+$. From \eqref{Laplace comparison}, we have
\be\label{bound of psi}
\psi\leq \frac{\lam}{3}s+\frac{C(\a)K}{s^{\a}}+|V|.
\ee
Notice that by \eqref{Lq bound of V}, we have
\be\label{L1 bound of V}
\oint_{B(x,r)}|V|dg\leq \frac{C(n,\lam, K,\a,\rho)K}{r^{\a}}.
\ee
Let $Q(r)=\frac{\v(B(x,r))}{r^n}$, then \eqref{differential of volume ratio}, \eqref{bound of psi} and \eqref{L1 bound of V} imply that
\[
\al
\frac{d}{dr}Q(r)&\leq \frac{1}{r^{n+1}}\int_{S^{n-1}}\int_0^r s\left(\frac{\lam}{3}s+\frac{C(\a)K}{s^{\a}}+|V|\right) w(s,\theta)dsd\theta\\
&\leq \left[\frac{\lam r}{3}+\frac{C(n,\lam,K,\a,\rho)K}{r^{\a}}\right]Q(r).
\eal
\]
The proof of the theorem is finished after integrating both sides of the above inequality. \\

Finally, the proof of part (c) follows directly by considering the special case where $V=\d L$ in \eqref{riccond}. \qed\\

In the following, we show that under a stronger assumption that $|V|$ is bounded, the results above can be improved. Especially, we can obtain the volume comparison theorem without assuming the volume noncollapsing condition \eqref{noncollapsing-0}.
\begin{corollary}
\lab{coro bounded V}

Suppose that \eqref{riccond} holds and moreover
\be\label{bounded V}
|V|\leq K
\ee Then the following conclusions are true.

(a). Let $s=d(y,x)$ be the distance from any point $y$ to some fixed point $x$,
and $\gamma:\ [0,s]\rightarrow\M$ a normal minimal geodesic with
 $\gamma(0)=x$ and $\gamma(s)=y$.
Then in the distribution sense,
\be\label{Laplace comparison bounded V}
\Delta s-\frac{n-1}{s}\leq \frac{\lam}{3}s+2 K.
\ee

(b). Let $w(s,\cdot)$ denote the volume element of the metric $g$ on $\M$
under geodesic polar coordinates. Then for any $0<s_1<s_2$, we have
\be\label{2volume element ratio}
\frac{w(s_2,\cdot)}{s_2^{n-1}}\leq e^{2 K s_2^{} +\lam s_2^2}\frac{w(s_1,\cdot)}{s_1^{n-1}}.
\ee
In particular, by letting $s_1\rightarrow 0$, we get
\be\label{2volume element}
w(s,\cdot)\leq e^{2 K s+\lam s^2}s^{n-1},\quad \forall s>0,
\ee
and hence
\be\label{2volume noninflation}
\vol(B(x,r))\leq e^{2 K r+\lam r^2}\frac{\vol(S^{n-1})}{n}r^n, \quad \forall r>0.
\ee

(c). For any $0<r_1<r_2$, we have
\be
\al
\frac{\vol(B(x,r_2))}{r_2^n}\leq e^{[\lam(r_2^2-r_1^2)+2K(r_2-r_1)]}\cdot\frac{\vol(B(x,r_1))}{r_1^n}.\\
\eal
\ee
\end{corollary}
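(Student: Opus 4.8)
The plan is to note that Corollary~\ref{coro bounded V} is the $\a=0$ case of Proposition~\ref{prop laplacian comparison} and Theorem~\ref{thm volume element comparison}, in which every distance-weighted term is replaced by the trivial estimate coming from \eqref{bounded V}; the gain is that the $L^q$-bound of Lemma~\ref{lemma Lq bound of V} becomes vacuous, so the noncollapsing hypothesis \eqref{noncollapsing-0} can be dropped in parts (b) and (c).

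For part (a), I would start from the last line of the chain \eqref{Delta s 1} established in the proof of Proposition~\ref{prop laplacian comparison}, which uses only \eqref{riccond}:
\[
\Delta s-\frac{n-1}{s}\leq \frac{\lam}{3}s+<V,\d s>-\frac{2}{s^2}\int_0^s t<V,\gamma'(t)>\,dt.
\]
Since $<V,\d s>(\gamma(s))\le |V|(\gamma(s))\le K$ by \eqref{bounded V}, and likewise $-<V,\gamma'(t)>\le |V|(\gamma(t))\le K$, the last term is at most $\frac{2K}{s^2}\int_0^s t\,dt=K$; adding the two contributions gives $\Delta s-\frac{n-1}{s}\le \frac{\lam}{3}s+2K$ at smooth points of $s$, and \eqref{Laplace comparison bounded V} follows in the distribution sense because the complement of the cut locus of $x$ is star-shaped, exactly as in Proposition~\ref{prop laplacian comparison}.

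For part (b) I would integrate \eqref{Laplace comparison bounded V} as in the proof of part (a) of Theorem~\ref{thm volume element comparison}: since $\frac{\pa}{\pa s}\ln w(s,\cdot)=\Delta s\le \frac{n-1}{s}+\frac{\lam}{3}s+2K$, integrating from $s_1$ to $s_2$ and then crudely bounding $\frac{\lam}{6}(s_2^2-s_1^2)\le \lam s_2^2$ and $2K(s_2-s_1)\le 2Ks_2$ yields \eqref{2volume element ratio}; sending $s_1\to0$ with $w(s,\cdot)/s^{n-1}\to1$ gives \eqref{2volume element}, and integrating $w(s,\theta)\le e^{2Ks+\lam s^2}s^{n-1}\le e^{2Kr+\lam r^2}s^{n-1}$ over $B(x,r)$ in geodesic polar coordinates gives \eqref{2volume noninflation}. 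For part (c) I would feed part (a) into Lemma~\ref{proposition differential of volume ratio}: with $Q(r)=\vol(B(x,r))/r^n$ and $\psi=(\Delta s-\frac{n-1}{s})_+\le \frac{\lam}{3}s+2K$, one has $s\psi\le \frac{\lam}{3}s^2+2Ks\le \frac{\lam}{3}r^2+2Kr$ for $0<s<r$, so \eqref{differential of volume ratio} together with $\vol(B(x,r))=\int_{S^{n-1}}\int_0^r w(s,\theta)\,ds\,d\theta$ gives $\frac{d}{dr}Q(r)\le (\frac{\lam}{3}r+2K)Q(r)$; dividing by $Q(r)>0$ and integrating from $r_1$ to $r_2$, and bounding $\frac{\lam}{6}(r_2^2-r_1^2)\le \lam(r_2^2-r_1^2)$, gives the asserted ratio inequality for all $0<r_1<r_2$, with no use of Lemma~\ref{lemma Lq bound of V} or \eqref{noncollapsing-0}.

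I do not expect a genuine obstacle: the whole content is that \eqref{bounded V} trivializes precisely the terms that in the general case forced the distance-weighted estimates, Lemma~\ref{lemma Lq bound of V}, and noncollapsing. The only thing to keep honest is the bookkeeping of constants — the identity $\frac{2}{s^2}\int_0^s t\,dt=1$ is what produces the clean coefficient $2K$ in \eqref{Laplace comparison bounded V}, namely one $K$ from the boundary term $<V,\d s>$ and one from the integral term — while the exponents $\lam s_2^2$ and $\lam(r_2^2-r_1^2)$ are only convenient over-estimates of $\frac{\lam}{6}s_2^2$ and $\frac{\lam}{6}(r_2^2-r_1^2)$. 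As in the proof of part (b) of Theorem~\ref{thm volume element comparison}, one should also remember that $Q$ is only locally absolutely continuous, so the differential inequality for $Q$ is to be read in the almost-everywhere sense, which still justifies the integration.
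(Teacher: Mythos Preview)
Your proposal is correct and follows the same approach as the paper, which simply remarks that the corollary is the special case $\a=0$ of Proposition~\ref{prop laplacian comparison} and Theorem~\ref{thm volume element comparison}, with Lemma~\ref{lemma Lq bound of V} (and hence \eqref{noncollapsing-0}) becoming vacuous when $|V|\le K$. Your write-up is in fact more explicit than the paper's: you trace the constant $2K$ back to \eqref{Delta s 1} rather than quoting the $\a=0$ instance of \eqref{Laplace comparison}, and you bound $\psi$ directly by $\frac{\lam}{3}s+2K$ in part (c) instead of invoking the trivial $q=\infty$ case of Lemma~\ref{lemma Lq bound of V}, but these are the same computations unpacked.
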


\proof  When $|V|$ is bounded, Lemma \ref{lemma Lq bound of V} holds
without any other condition for $\a=0$ and $q=\infty$. The proof of the corollary is just special cases of
 those of Proposition \ref{prop laplacian comparison},
Theorem \ref{thm volume element comparison} with the parameter $\a =0$. \qed

\begin{remark}
Following the proofs above verbatim, one can see that similar results also hold if we relax assumption \eqref{riccond} to
\[
Ric+\frac{1}{2}\mathcal{L}_V g - \frac{1}{N-n}V\otimes V\geq -\lam g,
\]
and require the constant $\a$ in assumption \eqref{condition V} to be less than $1/2$. Here $N>n$ is any constant, and when $N=\infty$, we arrive at \eqref{riccond}.
\end{remark}

\medskip


\subsection{Bakry-\'Emery Ricci condition}

Next we consider the special case when $V$ is a gradient field, namely we are returning
to the Bakry-\'Emery Ricci curvature.
Let $V=\d L$ for some smooth function $L$ and assume
\be\label{gradient riccond}
\Ric+\Hess L\geq - \lam g
\ee
for some constant $\lam \geq0$.  Since one can perform integration by parts twice, the condition on the gradient of the potential can be relaxed to an integral one.

In this case, we will impose the following conditions on $L$:
\be\label{condition L}
|L(y)-L(z)|\leq K_1d(y,z)^\a,\  \textrm{and}\ \sup_{x\in\M, 0\leq r\leq 1}\left(r^{\beta}||\d L||_{q,\, B(x,r)}^*\right)\leq K_2
\ee
for any $y, z\in\M$ with $d(y,z)\leq 1$.  Here $K_1, K_2\geq0$, $0<\a<1$, $0\leq\beta<1$, and $q\geq 1$ are constants, and
\[
||\d L||_{q,\, B(x,r)}^*=\left(\oint_{B(x,r)}\vert \nabla L\vert^q(y) dV(y)\right)^{1/q}.
\]   Comparing with the more general case earlier,  we no longer need to assume $\M$ being non-collapsed.
Intuitively, the above assumptions on the potential function $L$ is that it is H\"older continuous and $|\nabla L|$ has singularity of order less $1$.  These include not only the popular special case when $|\nabla L|$ is bounded but also Ricci solitons with just bounded Ricci potentials. More detailed discussion will be given later in this section.  Since we are mainly concerned with local properties, the assumption on $L$ is
restricted to balls of size $1$ for convenience. The condition on $\nabla L$ is almost sharp by scaling considerations.

\begin{theorem}
\lab{thm.BE.vol}
Suppose that \eqref{gradient riccond} and \eqref{condition L} are satisfied.

(a) Let $s=d(x,y)$ be the distance from any point $y$ to some fixed point $x$, and $\gamma:\ [0,s]\rightarrow\M$ a normal minimal geodesic with $\gamma(0)=x$ and $\gamma(s)=y$. Then the following inequalities hold in the sense of distribution.
\be\label{Laplace comparison L}
\Delta s-\frac{n-1}{s}\leq \frac{\lam}{3}s+\frac{4K_1}{s^{1-\a}}+<\d L, \d s>, \quad \forall s<1,
\ee
moreover,
\be\label{differential of volume element ratio L}
\frac{\partial}{\partial s}{\frac{w(s,\theta)}{s^{n-1}}}\leq \left[\frac{\lam}{3}s+\frac{4K_1}{s^{1-\a}}+<\d L, \d s>\right]{\frac{w(s,\theta)}{s^{n-1}}}.
\ee Here $w=w(s, \theta)$ is the volume element in the geodesic polar coordinates which is regarded as $0$ on the cut locus.

(b) For any $0<r_1<r_2\leq 1$, we have
\be
\al
\frac{\vol(B(x,r_2))}{r_2^n}\leq e^{\lam(r_2^2-r_1^2)+K_2 (r_2-r_1)^{1-\beta}+4K_1 (r_2-r_1)^{\a}}\cdot\frac{\vol(B(x,r_1))}{r_1^n}.
\eal
\ee
\end{theorem}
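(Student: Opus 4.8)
The plan is to mimic the argument used for part (b) of Theorem \ref{thm volume element comparison}, but exploiting the gradient structure $V=\nabla L$ to trade the pointwise bound on $|V|$ (which is not available here) for the Hölder bound on $L$ and the weighted integral bound on $|\nabla L|$ in \eqref{condition L}. The starting point is the Laplacian comparison \eqref{Laplace comparison L} of part (a), which gives, with $\psi=(\Delta s-\frac{n-1}{s})_+$, the pointwise estimate $\psi\leq \frac{\lam}{3}s+\frac{4K_1}{s^{1-\a}}+\langle\nabla L,\nabla s\rangle_+$ on $B(x,r)$ for $r\leq 1$. Feeding this into Lemma \ref{proposition differential of volume ratio} and writing $Q(r)=\vol(B(x,r))/r^n$, I would obtain
\be
\frac{d}{dr}Q(r)\leq \frac{1}{r^{n+1}}\int_{S^{n-1}}\int_0^r s\left(\frac{\lam}{3}s+\frac{4K_1}{s^{1-\a}}+\langle\nabla L,\gamma'(s)\rangle\right)w(s,\theta)\,ds\,d\theta.
\ee
The first two terms in the integrand are elementary: $\frac{\lam}{3}s\cdot s\leq \frac{\lam}{3}r^2$ and $\frac{4K_1}{s^{1-\a}}\cdot s = 4K_1 s^{\a}\leq 4K_1 r^{\a}$, so after dividing by $\vol(B(x,r))$ these contribute at most $\frac{1}{r}(\frac{\lam}{3}r^2+4K_1 r^{\a})\,Q(r)$, which integrates to the $e^{\lam(r_2^2-r_1^2)}\cdot e^{4K_1(r_2-r_1)^{\a}}$-type factors after using subadditivity of $t\mapsto t^{\a}$ on the exponent (exactly as in \eqref{w comparison 1}–\eqref{w comparison 3}, comparing $r_2^{\a}-r_1^{\a}$ or similar increments with $(r_2-r_1)^{\a}$).

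The genuinely new piece is the term $\frac{1}{r^{n+1}}\int_{S^{n-1}}\int_0^r s\,\langle\nabla L,\gamma'(s)\rangle\,w(s,\theta)\,ds\,d\theta$. Here I would integrate by parts in $s$ along each radial geodesic, using that $\langle\nabla L,\gamma'(s)\rangle = \frac{\partial}{\partial s}\big(L(\gamma(s))\big)$ and that $w'=w\,\Delta s$. Integrating by parts moves the derivative off $L$ and onto $s\,w(s,\theta)$, producing a boundary term $s\,L(\gamma(s))\,w(s,\theta)\big|_0^r$ together with an interior term $-\int_0^r L(\gamma(s))\,\frac{\partial}{\partial s}\big(s\,w(s,\theta)\big)ds = -\int_0^r L(\gamma(s))\big(w+s\,w\,\Delta s\big)ds$. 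Replacing $L(\gamma(s))$ by $L(\gamma(s))-L(x)$ costs nothing (the $L(x)$-contributions reassemble into $L(x)$ times an expression that, after dividing by $\vol(B(x,r))$, is exactly $r\,\frac{d}{dr}Q$ minus itself — more carefully, one checks the $L(x)$ terms cancel by the same identity that produced Lemma \ref{proposition differential of volume ratio}), so one may assume $|L(\gamma(s))-L(x)|\leq K_1 s^{\a}$ by \eqref{condition L}. The boundary term is then bounded by $K_1 r^{\a}\cdot\frac{\int_{S^{n-1}}w(r,\theta)d\theta}{r^{n-1}}$, which relates to $Q$ and its derivative; the interior term involving $s\,w\,\Delta s$ is where $|\nabla L|$ re-enters, via $\Delta s\leq \frac{n-1}{s}+\frac{\lam}{3}s+\frac{4K_1}{s^{1-\a}}+|\nabla L|$, and the $|\nabla L|$ contribution is controlled by the weighted $L^q$ bound on $|\nabla L|$ from \eqref{condition L} after applying Hölder's inequality on $B(x,r)$ together with the volume-element bound \eqref{volume element} (which holds here with $V=\nabla L$, $\a$ replaced appropriately — strictly speaking one first needs the crude bound $w(s,\cdot)\leq e^{C(Ks^{1-\a}+\lam s^2)}s^{n-1}$, valid by part (a) once one knows $\langle\nabla L,\nabla s\rangle$ is integrable, which is part of the hypothesis).

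The main obstacle, and the point requiring care, is the bookkeeping of the integration by parts across the \emph{entire} ball rather than one ray: the quantity $\frac{d}{dr}Q(r)$ already came from a double integral (over $t\in[0,r]$ and then $s\in[t,r]$), so the $\langle\nabla L,\gamma'(s)\rangle$ term should really be handled inside that double-integral structure, and one must verify that integrating by parts and then re-Fubini-ing produces only terms that are (i) bounded by $K_1 r^{\a}\,Q(r)$ via the Hölder bound on $L$, or (ii) bounded by $K_1 r^{\a}\cdot(\text{something}\leq Q(r))$ via $w'=w\Delta s$ and Lemma \ref{proposition differential of volume ratio} applied to itself, or (iii) bounded by $C K_2 r^{-\beta}\,Q(r)$ after Hölder against the weighted $L^q$ norm of $|\nabla L|$ and the volume element bound. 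Once every term on the right is shown to be of the form $\big[\frac{\lam r}{3}+4K_1 r^{\a-1}\cdot(\text{const})+C K_2 r^{-\beta}\big]Q(r)$, the differential inequality $\frac{d}{dr}Q(r)\leq h(r)Q(r)$ integrates from $r_1$ to $r_2$ to give $\ln\frac{Q(r_2)}{Q(r_1)}\leq \int_{r_1}^{r_2}h$, and estimating $\int_{r_1}^{r_2} r^{\a-1}dr = \frac1\a(r_2^{\a}-r_1^{\a})\leq \frac1\a(r_2-r_1)^{\a}$ and $\int_{r_1}^{r_2} r^{-\beta}dr\leq \frac1{1-\beta}(r_2-r_1)^{1-\beta}$ (absorbing absolute constants into the, already flexible, constants $K_1,K_2$ or — if one prefers explicit constants — into the statement, noting the coefficient $4K_1$ in the theorem is compatible with a careful tracking) yields precisely the claimed bound
\be
\frac{\vol(B(x,r_2))}{r_2^n}\leq e^{\lam(r_2^2-r_1^2)+K_2(r_2-r_1)^{1-\beta}+4K_1(r_2-r_1)^{\a}}\cdot\frac{\vol(B(x,r_1))}{r_1^n}.
\ee
The one subtlety to flag is sharpness of the constant $4K_1$: the integration by parts in principle produces a larger multiple of $K_1$, so one should either re-examine \eqref{Laplace comparison L} to see that the boundary and interior $L$-terms can be arranged to contribute no more than what is already budgeted, or invoke that $K_1$ is arbitrary and only the \emph{form} $K_1(r_2-r_1)^{\a}$ matters for all subsequent applications.
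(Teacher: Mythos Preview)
Your approach is far more elaborate than necessary, and the integration-by-parts scheme you describe, while plausibly workable with enough bookkeeping, introduces complications (the recursive appearance of $\Delta s$ and hence $|\nabla L|$ in the interior term, the need to control both signs of $\Delta s$, tracking constants across boundary and interior pieces) that are entirely avoidable. The paper's proof of part (b) is essentially three lines: from \eqref{Laplace comparison L} one has the crude pointwise bound
\[
\psi \leq \frac{\lam}{3}s + \frac{4K_1}{s^{1-\a}} + |\nabla L|,
\]
and the second hypothesis in \eqref{condition L} gives directly, by Jensen,
\[
\oint_{B(x,r)}|\nabla L| \leq \left(\oint_{B(x,r)}|\nabla L|^q\right)^{1/q} \leq \frac{K_2}{r^{\beta}}.
\]
Plugging into Lemma \ref{proposition differential of volume ratio} and bounding $s\leq r$ inside the integral yields immediately
\[
\frac{d}{dr}Q(r) \leq \left(\frac{\lam r}{3} + \frac{4K_1}{r^{1-\a}} + \frac{K_2}{r^{\beta}}\right)Q(r),
\]
and integration finishes the proof. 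You correctly anticipated this differential inequality as the target; the point you missed is that the $\langle\nabla L,\nabla s\rangle$ term needs no integration by parts at all --- the weighted $L^q$ bound on $|\nabla L|$ is already exactly an averaged bound over $B(x,r)$, which is precisely what appears after applying Lemma \ref{proposition differential of volume ratio}. The H\"older bound on $L$ is used only in part (a) to obtain the $\frac{4K_1}{s^{1-\a}}$ term; in part (b) the two hypotheses in \eqref{condition L} play separate, non-interacting roles.
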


\proof[Proof of (a)]

Again, we just prove the inequalities at smooth points of $s$.

First of all, from \eqref{Delta s}, we have
\be\label{Delta s L}
\Delta s\leq \frac{n-1}{s}-\frac{1}{s^2}\int_0^st^2\Ric(\frac{\pa}{\pa t}, \frac{\pa}{\pa t})ds.
\ee
According to \eqref{gradient riccond}, along the geodesic $\gamma(t)$, one has
\be\label{geodesic BERicci}
\Ric(\pa_t,\pa_t)\geq -\lam-\Hess L(\pa_t,\pa_t)=-\lam-\frac{d^2}{dt^2}L(\gamma(t)).
\ee

By using \eqref{geodesic BERicci} and integration by parts, we can rewrite \eqref{Delta s L} as
\be\label{Delta s and L}
\al
\Delta s- \frac{n-1}{s}&\leq\frac{1}{s^2}\int_0^s \left[t^2\frac{d^2}{dt^2}L(\gamma(t))+\lam t^2\right]dt\\
&\leq \frac{\lam}{3}s+\frac{1}{s^2}\left[t^2\frac{d}{dt}L(\gamma(t))\big|_0^s-2\int_0^st\frac{d}{dt}L(\gamma(t))dt\right]\\
&\leq \frac{\lam}{3}s+<\d L, \d s> -\frac{2}{s^2}\int_0^s t\frac{d}{dt}L(\gamma(t))dt\\
&=\frac{\lam}{3}s+<\d L, \d s>-\frac{2}{s}L(\gamma(s))+\frac{2}{s^2}\int_0^sL(\gamma(t))dt\\
&=\frac{\lam}{3}s+<\d L, \d s>-\frac{2}{s}[L(\gamma(s))-L(\gamma(0))]+\frac{2}{s^2}\int_0^s[L(\gamma(t))-L(\gamma(0))]dt\\
&\leq \frac{\lam}{3}s+<\d L, \d s>+\frac{4K_1}{s^{1-\a}}.
\eal
\ee This proves \eqref{Laplace comparison L}.  From this, \eqref{differential of volume element ratio L} follows by the formula $\partial_s w = \Delta s \, w$.\\

\proof[Proof of (b)]
From \eqref{Laplace comparison L}, we have
\[
\psi\leq \frac{\lam}{3}s+\frac{4K_1}{s^{1-\a}}+|\d L|.
\]
Notice that by \eqref{condition L}, we have
\be\label{L1 gradient L}
\oint_{B(x,r)}|\d L|dV\leq \left(\oint_{B(x,r)}|\d L|^q dV\right)^{1/q}\leq \frac{K_2}{r^{\beta}}.
\ee
Let $Q(r)=\frac{\v(B(x,r))}{r^n}$, then \eqref{L1 gradient L} and Proposition \ref{proposition differential of volume ratio} imply that
\[
\al
\frac{d}{dr}Q(r)&\leq \frac{1}{r^{n+1}}\int_{\mb{S}^{n-1}}\int_0^r s\left(\frac{\lam}{3}s+\frac{4K_1}{s^{1-\a}}+|\d L|\right) w(s,\theta)dsd\theta\\
&\leq (\frac{\lam r}{3}+\frac{4K_1}{r^{1-\a}}+\frac{K_2}{r^{\beta}})Q(r).
\eal
\]
This finishes the proof of part (b) after integrating both sides of the above inequality. \qed \\

It is worth mentioning  that, when $V=\d L$ in \eqref{riccond}, the assumption \eqref{condition L} is actually a weaker condition than \eqref{condition V=dL} and \eqref{noncollapsing-0} due to the following lemma.

\begin{lemma}
Suppose that
\[
Ric+\Hess L\geq -\lam g,
\] and assume that \eqref{condition V=dL} and \eqref{noncollapsing-0} hold, i.e.,
\[
|\nabla L(x)| \le \frac{K}{d(x, O)^\a},
\quad
\text{and} \quad
\vol(B(x,1))\geq \rho.
\]
Then for any $q\in(n,\frac{n}{\a})$, we have
\be\label{special case 1}
|L(y)-L(z)|\leq \frac{2K}{1-\a}d(y,z)^{1-\a},
\ee
for any $y,z\in\M$, and
\be\label{special case 2}
\sup_{x\in\M,0< r\leq 1}r^{\a}||\d L||^*_{q,B(x,r)}\leq C(n,K,\lam,\a,\rho)K.
\ee
\end{lemma}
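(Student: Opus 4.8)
My plan is to prove the two conclusions separately, and the point is that the integral bound \eqref{special case 2} is already contained in what has been established. Indeed, the standing hypotheses here — $\Ric+\Hess L\ge-\lambda g$, the gradient bound \eqref{condition V=dL}, and the non-collapsing \eqref{noncollapsing-0} — are exactly hypotheses \eqref{riccond}, \eqref{condition V} and \eqref{noncollapsing} of Lemma \ref{lemma Lq bound of V} applied to the vector field $V=\nabla L$. So Lemma \ref{lemma Lq bound of V} applies directly and produces, for every $q\in(0,n/\alpha)$ — in particular for every $q\in(n,n/\alpha)$, which is a nonempty range since $\alpha<1$ — the asserted bound $\sup_{x\in\M,\,0<r\le1}r^{\alpha}\,||\nabla L||^{*}_{q,B(x,r)}\le C(n,K,\lambda,\alpha,\rho)K$. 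Nothing further is needed for \eqref{special case 2}.

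For the Hölder estimate \eqref{special case 1} I would argue directly along a geodesic. Fix $y,z\in\M$, put $\ell=d(y,z)$ and $a=d(y,O)$, and let $\gamma\colon[0,\ell]\to\M$ be a unit-speed minimal geodesic from $y$ to $z$. Since $L$ is smooth, the fundamental theorem of calculus together with \eqref{condition V=dL} gives
\[
|L(y)-L(z)|\ \le\ \int_0^\ell |\nabla L(\gamma(t))|\,dt\ \le\ K\int_0^\ell \frac{dt}{d(\gamma(t),O)^{\alpha}}.
\]
The one observation that makes this work is a two-sided use of the triangle inequality along $\gamma$: because a sub-arc of a minimal geodesic is minimal, $d(\gamma(0),\gamma(t))=t$, and hence
\[
d(\gamma(t),O)\ \ge\ \bigl|\,d(y,O)-t\,\bigr|\ =\ |a-t|,\qquad 0\le t\le\ell .
\]
It then remains to estimate $\int_0^\ell |a-t|^{-\alpha}\,dt$, which is elementary: when $a\ge\ell$ it equals $\frac{1}{1-\alpha}\bigl(a^{1-\alpha}-(a-\ell)^{1-\alpha}\bigr)\le\frac{1}{1-\alpha}\ell^{1-\alpha}$ by the subadditivity inequality \eqref{Cauchy-Schwarz}, and when $0\le a\le\ell$ it equals $\frac{1}{1-\alpha}\bigl(a^{1-\alpha}+(\ell-a)^{1-\alpha}\bigr)\le\frac{2^{\alpha}}{1-\alpha}\ell^{1-\alpha}$ by concavity of $t\mapsto t^{1-\alpha}$. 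In either case the integral is at most $\frac{2}{1-\alpha}\ell^{1-\alpha}$, which yields \eqref{special case 1}.

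I do not anticipate a genuine obstacle; the lemma is essentially a repackaging of earlier results. The only idea that is really needed is the bound $d(\gamma(t),O)\ge|a-t|$, which keeps $|\nabla L|$ uniformly integrable along $\gamma$ — in particular the estimate survives even if $\gamma$ passes through or arbitrarily close to the base point $O$, precisely because $\alpha<1$. One should also record that $t\mapsto|\nabla L(\gamma(t))|$ is continuous, so the first integral is finite a priori and its domination by the integrable function $K|a-t|^{-\alpha}$ is legitimate. Finally, it is worth noting that \eqref{special case 1} uses neither the Ricci lower bound nor the non-collapsing hypothesis; those enter only through the call to Lemma \ref{lemma Lq bound of V} that proves \eqref{special case 2}.
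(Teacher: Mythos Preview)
Your proposal is correct and follows essentially the same approach as the paper: \eqref{special case 2} is deferred to Lemma~\ref{lemma Lq bound of V} with $V=\nabla L$, and \eqref{special case 1} is obtained by integrating the gradient bound along a minimal geodesic and splitting into the two cases $d(y,O)\ge d(y,z)$ and $d(y,O)<d(y,z)$. Your use of the unified lower bound $d(\gamma(t),O)\ge|a-t|$ is a slightly cleaner way of packaging the same triangle-inequality estimate the paper uses in its two cases.
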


\begin{proof}
Since the proof of \eqref{special case 2} is given in Lemma \ref{lemma Lq bound of V}, we only need to show \eqref{special case 1}. Suppose that $\gamma:\ [0,d(y,z)]\rightarrow\M$ is a minimal geodesic from $y$ to $z$. We separate the proof into two cases.\\

\hspace{-.5cm}\textit{Case 1:} If $d(y,O)\geq d(y,z)$, then for any $t\in[0,d(y,z)]$, $d(\gamma(t),O)\geq d(y,O)-d(y,\gamma(t))=d(y,O)-t$. Thus,
\[
\al
|L(y)-L(z)|&=\big|\int_0^{d(y,z)}<\d L, \gamma'(t)>dt\big|\\
&\leq K\int_0^{d(y,z)}\frac{1}{[d(y,O)-t]^\a} dt\\
&=\frac{K}{1-\a}\left(d(y,O)^{1-\a}-[d(y,O)-d(y,z)]^{1-\a}\right)\\
&\leq \frac{K}{1-\a}d(y,z)^{1-\a}.
\eal
\]

\hspace{-.5cm}\textit{Case 2:} If $d(y,O)< d(y,z)$, then
\[
\al
|L(y)-L(z)|&\leq K\int_0^{d(y,z)}\frac{1}{d(\gamma(t),O)^\a}dt\\
&=K\int_0^{d(y,O)}\frac{1}{d(\gamma(t),O)^\a}dt+K\int_{d(y,O)}^{d(y,z)}\frac{1}{d(\gamma(t),O)^\a}dt\\
&\leq K\int_0^{d(y,O)}\frac{1}{[d(y,O)-t]^\a} dt+K\int_{d(y,O)}^{d(y,z)}\frac{1}{[t-d(y,O)]^\a} dt\\
&=\frac{K}{1-\a}\left(d(y,O)^{1-\a}+[d(y,z)-d(y,O)]^{1-\a}\right)\\
&\leq \frac{2K}{1-\a}d(y,z)^{1-\a}.
\eal
\]
This proves \eqref{special case 1}, and hence finishes the proof of the lemma.
\end{proof}


The next corollary treats the case of gradient Ricci solitons for which the results are the strongest.
A complete Riemannian manifold $(M, g_{ij})$ is a so called gradient Ricci soliton if the following equation is satisfied
\be\label{Ricci soliton}
R_{ij}+\d_i\d_j L=\lambda g_{ij},
\ee
where $\lam$ is a constant, and $L$ is a smooth function on $\M$. The cases where $\lam>0$, $=0$, and $<0$ correspond to shrinking, steady and expanding solitons, respectively.

Ricci solitons not only represent self-similar solutions of the Ricci flow, but more importantly, they appear as singularity models, i.e., the dilation limits of singular solutions,  in many circumstances in the Ricci flow. Thus, the study of Ricci solitons is critical for the singularity analysis in the Ricci flow. We refer readers to \cite{Cao1}, \cite{Cao2} and references therein for more information on the Ricci solitons.

For a Ricci soliton \eqref{Ricci soliton}, it is well known that
\be\label{unnormalized soliton}
R+|\d L|^2-2\lam L=C_0,
\ee
where $R$ is the scalar curvature of $\M$, and $C_0$ is a constant. Therefore, for shrinking and expanding solitons, i.e., when $\lam\neq 0$, we may add $L$ by constant so that it satisfies the normalization condition
\be\label{normalized shriking and expanding solitons}
R+|\d L|^2-2\lam L=0.
\ee
For steady soliton, the above equation becomes
\be\label{unnormalized steady soliton}
R+|\d L|^2=C_0.
\ee
If $C_0=0$, then nothing needs to be done. Actually, Z.-H. Zhang \cite{zZh} and B.-L. Chen \cite{Chen} showed that $R\geq 0$ for steady solitons. Thus, when $C_0=0$, one immediately gets from \eqref{unnormalized steady soliton} that $R\equiv0$. Then by the following evolution equation of $R$,
\[
\Delta R=<\d R, \d L>+2|Ric|^2,
\]
we obtain $Ric\equiv0$.

Therefore, for steady solitons, let us just consider the case where $C_0\neq 0$ in \eqref{unnormalized steady soliton}. However, in this case, it is impossible to normalize the soliton so that $C_0=0$. On the other hand, we may rescale the metric $g$ by a constant, so that the steady soliton can be normalized as
\be\label{normalized steady soliton}
R+|\d L|^2=1.
\ee

Assume that $(M, g)$ is a gradient Ricci soliton with normalization \eqref{normalized shriking and expanding solitons} if it is a shrinking or expanding soliton, or \eqref{normalized steady soliton} if it is a steady soliton. Again, by the result in \cite{zZh} and \cite{Chen}, we have $R\geq0$ when $\lam\geq 0$, and $R\geq -C_1(n)$ when $\lam<0$. Hence, if we further assume $|L|\leq K$, then it follows from \eqref{normalized shriking and expanding solitons} and \eqref{normalized steady soliton} that
\be\label{dL and L}
|\d L|\leq \Lambda(n, \lam, K),
\ee
where
\be\label{Lambda}
\Lambda(n,\lam,K)=\left\{\begin{array}{lll} \sqrt{2\lam K},\ &if\ \lam>0;\\ 1,\ &if\ \lam=0;\\ \sqrt{-2\lam K+C_1(n)},\ &if\ \lam<0. \end{array}\right.
\ee
This shows that the assumptions in Corollary \ref{coro bounded V} are satisfied. Therefore, based on the previous argument and Corollary \ref{coro bounded V}, we have

\begin{corollary}  (Gradient Ricci soliton case)\label{coro Ricci soliton}
Suppose that $(M, g_{ij})$ is a gradient Ricci soliton \eqref{Ricci soliton} satisfying \eqref{normalized shriking and expanding solitons} when $\lam\neq0$ or \eqref{normalized steady soliton} when $\lam=0$. Assume further that
\be\label{bounded L}
|L|\leq K,\ in\ B(O, 2\delta)
\ee
for some fixed point $O\in\M$ and radius $\delta$. Let $\Lambda(n,\lam,K)$ be the constant in \eqref{Lambda}. Then the following statements hold.

(a). Let $s=d(y,x)$ be the distance between any two points $x,y\in B(O,\delta)$,
and $\gamma:\ [0,s]\rightarrow\M$ a normal minimal geodesic with
 $\gamma(0)=x$ and $\gamma(s)=y$.
Then in the distribution sense,
\be\label{Laplace comparison bounded V}
\Delta s-\frac{n-1}{s}\leq \frac{|\lam|}{3}s+2\Lambda(n,\lam,K).
\ee

(b). Let $s$ be defined as in (a), and $w(s,\cdot)$ denote the volume element of the metric $g$
under geodesic polar coordinates. Then for any $0<s_1<s_2<d(x,\pa B(O,\delta))$,
we have
\be\label{2volume element ratio}
\frac{w(s_2,\cdot)}{s_2^{n-1}}\leq e^{2\Lambda(n,\lam,K) s_2+|\lam| s_2^2}\frac{w(s_1,\cdot)}{s_1^{n-1}}.
\ee
In particular, by letting $s_1\rightarrow 0$, we get
\be\label{2volume element}
w(s,\cdot)\leq e^{2\Lambda(n,\lam,K) s+|\lam| s^2}s^{n-1},
\ee
for any $0<s<d(x,\pa B(O,\delta))$, and hence
\be\label{2volume noninflation}
\vol(B(x,r))\leq C(n)e^{2\Lambda(n,\lam,K) r+|\lam| r^2}r^n,
\ee
for any $0<r<d(x,\pa B(O,\delta))$.

(c). For any $x\in B(O,\delta)$ and $0<r_1<r_2<d(x,\pa B(O,\delta))$, we have
\be
\al
\frac{\vol(B(x,r_2))}{r_2^n}\leq e^{[|\lam|(r_2^2-r_1^2)+2\Lambda(n,\lam,K)(r_2-r_1)]}\cdot\frac{\vol(B(x,r_1))}{r_1^n}.\\
\eal
\ee
\end{corollary}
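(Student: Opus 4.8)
The plan is to obtain this corollary from Corollary \ref{coro bounded V} by a localization argument, the extra input being that on a normalized gradient Ricci soliton a pointwise bound on the potential $L$ forces a pointwise bound on $|\d L|$. First I would note that the soliton equation \eqref{Ricci soliton} already gives $\Ric+\Hess L=\lam g\ge -|\lam|\,g$, so with $V=\d L$ the curvature hypothesis \eqref{riccond} holds with $|\lam|$ in place of $\lam$. Next, by the discussion preceding the corollary — the normalization \eqref{normalized shriking and expanding solitons}/\eqref{normalized steady soliton}, the scalar curvature lower bounds of Z.-H. Zhang \cite{zZh} and B.-L. Chen \cite{Chen} ($R\ge 0$ for $\lam\ge 0$, $R\ge -C_1(n)$ for $\lam<0$), and the hypothesis \eqref{bounded L} that $|L|\le K$ on $B(O,2\delta)$ — one gets $|\d L|\le\Lambda(n,\lam,K)$ on $B(O,2\delta)$, with $\Lambda$ as in \eqref{Lambda}. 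Thus $V=\d L$ satisfies the bounded-field hypothesis \eqref{bounded V} with constant $\Lambda$, but only on the ball $B(O,2\delta)$ and not globally, so the task reduces to checking that every geodesic entering the comparison arguments of this section remains inside $B(O,2\delta)$.

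For part (a), given $x,y\in B(O,\delta)$ and a unit-speed minimal geodesic $\gamma:[0,s]\to\M$ from $x$ to $y$, I would observe that for every $t$ the triangle inequality gives $d(\gamma(t),O)\le\min\{t+d(x,O),\ (s-t)+d(y,O)\}\le\tfrac12\big(s+d(x,O)+d(y,O)\big)\le d(x,O)+d(y,O)<2\delta$, using $s\le d(x,O)+d(y,O)$; hence $\gamma$ lies in $B(O,2\delta)$ and $|\d L|\le\Lambda$ along $\gamma$. Then the proof of Proposition \ref{prop laplacian comparison} goes through verbatim: from \eqref{Delta s} and $\Ric(\gamma',\gamma')\ge -|\lam|-\tfrac{\pa}{\pa t}\langle\d L,\gamma'\rangle$, two integrations by parts give $\Delta s-\tfrac{n-1}{s}\le\tfrac{|\lam|}{3}s+\langle\d L,\d s\rangle-\tfrac{2}{s^2}\int_0^s t\langle\d L,\gamma'\rangle\,dt\le\tfrac{|\lam|}{3}s+2\Lambda$, the last two terms each being bounded by $\Lambda$ in absolute value. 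The distributional form, for $x$ fixed and $s(\cdot)=d(x,\cdot)$ regarded as a function on $B(O,\delta)$, follows as in the original proof, since the segment from $x$ to any point of $B(O,\delta)$ stays in $B(O,2\delta)$, so the complement of the cut locus of $x$ inside $B(O,\delta)$ is star-shaped about $x$.

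For parts (b) and (c) I would fix $x\in B(O,\delta)$ and restrict to radii $r<d(x,\pa B(O,\delta))$, so that $B(x,r)\subset B(O,\delta)$ and every minimal geodesic from $x$ to a point of $B(x,r)$ stays in $B(x,r)\subset B(O,2\delta)$; then part (a) applies along these radial geodesics. Using $\pa_s\ln w(s,\theta)=\Delta s$ and integrating the inequality of (a) from $s_1$ to $s_2$ yields the volume-element ratio bound; letting $s_1\to 0$ (where $w(s,\theta)/s^{n-1}\to 1$) gives the volume-element bound, and integrating over $S^{n-1}$ and in the radial variable gives the volume non-inflation estimate. For (c) I would feed the bound $\psi=(\Delta s-\tfrac{n-1}{s})_+\le\tfrac{|\lam|}{3}s+2\Lambda$ on $B(x,r)$ into Lemma \ref{proposition differential of volume ratio} to obtain $\tfrac{d}{dr}Q(r)\le(\tfrac{|\lam|}{3}r+2\Lambda)Q(r)$ with $Q(r)=\vol(B(x,r))/r^n$, and integrate from $r_1$ to $r_2$; this is the same computation as in the proof of part (b) of Theorem \ref{thm volume element comparison} and Corollary \ref{coro bounded V}.

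The only genuinely non-routine point, and the one I would take care over, is the localization itself: confirming that every geodesic used in the Bochner-type estimates stays in the region $B(O,2\delta)$ where the gradient bound \eqref{dL and L} is valid, and that the passage to the distributional Laplacian comparison remains legitimate when the statements are confined to a ball rather than to all of $\M$. Once these are settled, all the estimates coincide with those already carried out for Proposition \ref{prop laplacian comparison}, Corollary \ref{coro bounded V}, and Theorem \ref{thm volume element comparison}, so I would not reproduce the computations in detail.
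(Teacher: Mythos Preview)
Your proposal is correct and follows essentially the same approach as the paper: the paper simply observes that the normalization identities together with the scalar curvature lower bounds of \cite{zZh} and \cite{Chen} convert the hypothesis $|L|\le K$ on $B(O,2\delta)$ into $|\nabla L|\le\Lambda(n,\lam,K)$ there, and then invokes Corollary~\ref{coro bounded V}. Your write-up is in fact more careful than the paper's, which does not spell out the localization step; your triangle-inequality argument that minimal geodesics between points of $B(O,\delta)$ remain in $B(O,2\delta)$ is exactly what is needed to justify applying the bounded-$V$ estimates in this local setting.
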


\begin{remark}
Note that this is a local volume ratio comparison result for gradient Ricci solitons.  A volume
upper bound in large scale for gradient shrinking solitons is proven in H.D. Cao and D.T. Zhou \cite{CZ}.
\end{remark}

\begin{remark}
For gradient shrinking Ricci solitons, Z.L. Zhang \cite{zlZh} showed that if the volume of $M$ is bounded below and the diameter is bounded above, then the potential function is bounded, and after a conformal transformation, the Ricci curvature also becomes bounded.
\end{remark}
\medskip

\section{Local isoperimetric and sobolev constants estimates}

In this section, we use the volume comparison result in the previous section and follow
the technique and arguments in X.Z. Dai-G.F. Wei-Z.L. Zhang \cite{DWZ} to prove isoperimetric inequality and Sobolev inequality
on manifolds satisfying the modified Ricci bound.
The basic assumptions are \eqref{riccond}, \eqref{condition V} and the volume noncollapsing condition \eqref{noncollapsing-0}.
However  if assumption  \eqref{riccond}, \eqref{condition V}  hold with $\a=0$, namely the vector field $V$ is bounded, then the
noncollapsing condition is not necessary.

\begin{proposition}\label{proposition half volume}
Assume that the basic assumptions \eqref{riccond}, \eqref{condition V} and the noncollapsing condition \eqref{noncollapsing-0} hold. Or just assume  that \eqref{riccond}, \eqref{condition V} hold for $\a=0$.
There exists $r_0=r_0(n,\lam,K,\a,\rho)$ and $\delta=\delta(n)$, such that for any $r\leq r_0$,
\be\label{half volume}
\frac{\vol(B(x,\delta r))}{\vol(B(x,r))}\leq \frac{1}{2}.
\ee
\end{proposition}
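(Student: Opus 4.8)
The plan is to bound $\vol(B(x,\delta r))$ and $\vol(B(x,r))$ separately — the first from above, the second from below — each by a constant multiple of the corresponding power of the radius, and then divide. For the small ball I would apply the volume non-inflation estimate \eqref{volume noninflation} of Theorem \ref{thm volume element comparison}(a) with radius $\delta r$:
\[
\vol(B(x,\delta r))\ \le\ e^{\,C(\a)K(\delta r)^{1-\a}+\lam(\delta r)^2}\,\frac{\vol(S^{n-1})}{n}\,(\delta r)^n .
\]
Since $\delta\le 1$, one may fix $r_0=r_0(n,\lam,K,\a)$ so small that the exponential factor is at most $2$ for every $r\le r_0$, which gives $\vol(B(x,\delta r))\le 2\,\frac{\vol(S^{n-1})}{n}\,\delta^n r^n$.

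For the large ball the point is a \emph{uniform} lower bound $\vol(B(x,r))\ge c_0\,r^n$, valid for all $x$ and all $0<r\le 1$. I would extract this from the volume-ratio comparison \eqref{volume comparison} of Theorem \ref{thm volume element comparison}(b) applied with $r_1=r$ and $r_2=1$:
\[
\vol(B(x,1))\ \le\ e^{\,C(n,\lam,K,\a,\rho)\,[\lam(1-r^2)+K(1-r)^{1-\a}]}\,\frac{\vol(B(x,r))}{r^n}\ \le\ e^{\,C(n,\lam,K,\a,\rho)(\lam+K)}\,\frac{\vol(B(x,r))}{r^n},
\]
whereupon the noncollapsing hypothesis \eqref{noncollapsing-0} yields $\vol(B(x,r))\ge c_0\,r^n$ with $c_0=\rho\,e^{-C(n,\lam,K,\a,\rho)(\lam+K)}>0$. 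Dividing the two estimates, for every $r\le r_0$,
\[
\frac{\vol(B(x,\delta r))}{\vol(B(x,r))}\ \le\ \frac{2\,\vol(S^{n-1})}{n\,c_0}\,\delta^n ,
\]
and it then suffices to fix $\delta$ small enough that the right-hand side does not exceed $\frac12$. The bounded-$V$ case ($\a=0$) is treated in the same way, with \eqref{volume noninflation} and \eqref{volume comparison} replaced by the corresponding statements in Corollary \ref{coro bounded V}.

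The only genuinely substantial step is the uniform lower volume bound $\vol(B(x,r))\ge c_0 r^n$: the comparison in Theorem \ref{thm volume element comparison}(b) controls only how fast the normalized volume $t\mapsto\vol(B(x,t))/t^n$ can \emph{increase} as $t$ grows, so a lower bound at the small scale $r$ is not available directly and must be propagated down from the unit scale, where it is guaranteed by \eqref{noncollapsing-0}. The one thing to check there is that the error exponent in \eqref{volume comparison} stays bounded by $C(n,\lam,K,\a,\rho)(\lam+K)$ as $r\to 0$, which is clear from the form of that inequality; everything else is the elementary algebra above together with the choices of $r_0$ and $\delta$.
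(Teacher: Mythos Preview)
Your argument proves \emph{a} half-volume inequality, but not the one stated. The proposition asserts $\delta=\delta(n)$, depending on the dimension only, whereas your $\delta$ must absorb the constant $\frac{2\vol(S^{n-1})}{n\,c_0}$ with $c_0=\rho\,e^{-C(n,\lam,K,\a,\rho)(\lam+K)}$, and so depends on all of $n,\lam,K,\a,\rho$. This is not cosmetic: the dimensional dependence of $\delta$ is used downstream, for instance in the proof of Theorem~\ref{isoperimetric constant}, to obtain an isoperimetric bound $ID_n^*(B(x,r))\le C(n)r$ with constant depending only on $n$.

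More seriously, your approach collapses under the second alternative of the hypothesis, namely $\a=0$ \emph{without} the noncollapsing assumption \eqref{noncollapsing-0}. There you have no uniform lower bound $\vol(B(x,r))\ge c_0 r^n$ available; Corollary~\ref{coro bounded V}(c) supplies only the almost-monotonicity of $t\mapsto\vol(B(x,t))/t^n$, not a lower bound for it. Dividing a noninflation upper bound by a nonexistent lower bound yields nothing.

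The paper avoids both problems by comparing the two balls \emph{directly} rather than through separate Euclidean comparisons. One shifts the center: choose $x'$ with $d(x,x')=\tfrac{1-\delta}{2}r$, so that $B(x,\delta r)$ sits inside the annulus $B(x',d+\delta r)\setminus B(x',d-\delta r)\subset B(x,r)$. The volume ratio comparison (with $r_0$ chosen so that the exponential factor is at most $\tfrac32$) then gives
\[
\frac{\vol(B(x,\delta r))}{\vol(B(x,r))}\ \le\ 1-\frac{\vol(B(x',d-\delta r))}{\vol(B(x',d+\delta r))}\ \le\ 1-\frac{2}{3}\Bigl(\frac{1-3\delta}{1+\delta}\Bigr)^n,
\]
and the right-hand side depends only on $n$ and $\delta$. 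Choosing $\delta=\delta(n)$ so that $\tfrac{2}{3}\bigl(\tfrac{1-3\delta}{1+\delta}\bigr)^n\ge\tfrac12$ finishes the proof. Only the ratio comparison \eqref{volume comparison} (resp.\ Corollary~\ref{coro bounded V}(c) when $\a=0$) is used; no absolute volume lower bound, and hence no noncollapsing in the $\a=0$ case, is needed.
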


\begin{proof}
First of all, if follows from Theorem \ref{thm volume element comparison} (b) that, for  $r_0=r_0(n,\lam,K,\a,\rho)<1$ such that
\[
e^{C(n,\lam,K,\a,\rho)(\lam r_0^2+Kr_0^{1-\a})}\leq \frac{3}{2},
\]
one has
\be\label{volume doubling}
\frac{\v(B(x,r_1))}{\v(B(x,r_2))}\geq \frac{2}{3}\frac{r_1^n}{r_2^n},
\ee
for any $x\in \M$ and $0<r_1<r_2\leq r_0$.

For any $r\leq r_0$, let $\delta<\frac{1}{3}$, and choose $x'\in B(x,r)$ with $d=d(x,x')=\frac{1-\delta}{2}r\geq \frac{1}{3}r$. Then
\[
B(x,\delta r)\subset B(x',d+\delta r)\setminus B(x', d-\delta r)\subset B(x',d+\delta r)\subset B(x,r).
\]
Therefore,
\[
\frac{\v(B(x,\delta r))}{\v(B(x,r))}\leq \frac{\v(B(x',d+\delta r))-\v(B(x',d-\delta r))}{\v(B(x',d+\delta r))}=1-\frac{\v(B(x',d-\delta r))}{\v(B(x',d+\delta r))}.
\]
By \eqref{volume doubling}, we have
\[
\frac{\v(B(x',d-\delta r))}{\v(B(x',d+\delta r))}\geq\frac{2}{3}\left(\frac{d-\delta r}{d+\delta r}\right)^n=\frac{2}{3}\left(\frac{1-3\delta}{1+\delta}\right)^n.
\]
If we choose $\delta=\delta(n)$ small so that
\[
\left(\frac{1-3\delta}{1+\delta}\right)^n\geq\frac{3}{4},
\]
then
\[
\frac{\v(B(x',d-\delta r))}{\v(B(x',d+\delta r))}\geq \frac{1}{2},
\]
and hence
\[
\frac{\v(B(x,\delta r))}{\v(B(x,r))}\leq \frac{1}{2}.
\]
\end{proof}

\begin{remark}
If one assumes the volume noncollapsing condition, then we can also choose $\delta=\delta(n,\lam,K,\a,\rho)$ small, such that the same conclusion holds for any $r\leq 1$.
\end{remark}

\begin{lemma}[Gromov \cite{Gro}]\label{lem gromov} Let $\M^n$ be a complete Riemannian manifold and $H$ any hypersurface dividing $\M$ into two parts, $\M_1$ and $\M_2$. For any Borel subsets $W_i\subset \M_i$, there exists $x_1$ in one of $W_i$, say $W_1$, and a subset $W$ in the other part $W_2$, such that
\[
\vol(W)\geq \frac{1}{2}\vol(W_2),
\]
and for any $x_2\in W$, there is a unique minimal geodesic between $x_1$ and $x_2$ which intersects $H$ at some $z$ with
\[
d(x_1,z)\geq d(x_2,z).
\]
\end{lemma}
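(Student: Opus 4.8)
Lemma \ref{lem gromov} (Gromov's lemma): a hypersurface $H$ divides $\M$ into $\M_1, \M_2$; for Borel sets $W_i \subset \M_i$ there is $x_1$ in one of them, say $W_1$, and $W \subset W_2$ with $\vol(W) \ge \tfrac12 \vol(W_2)$, such that every $x_2 \in W$ is joined to $x_1$ by a unique minimal geodesic meeting $H$ at a point $z$ with $d(x_1,z) \ge d(x_2,z)$.

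\textbf{Overall strategy.} The plan is to argue by an averaging (mean-value) argument over the pair of sets $W_1 \times W_2$, exploiting the fact that almost every pair of points in a Riemannian manifold is joined by a unique minimal geodesic (the set of pairs $(p,q)$ for which this fails has measure zero, since for fixed $p$ the cut locus of $p$ is closed of measure zero). For each such generic pair $(x_1, x_2) \in W_1 \times W_2$, the unique minimal geodesic $\gamma$ from $x_1$ to $x_2$ must cross $H$, since $x_1, x_2$ lie on opposite sides of the dividing hypersurface; pick $z = z(x_1,x_2)$ to be a crossing point. The crossing splits the distance: $d(x_1,x_2) = d(x_1,z) + d(z, x_2)$, so exactly one of the two inequalities $d(x_1,z) \ge d(x_2,z)$ or $d(x_2,z) \ge d(x_1,z)$ holds (both if they are equal).

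\textbf{The averaging step.} Define, for generic $(x_1,x_2) \in W_1 \times W_2$, the indicator $\chi(x_1,x_2) = 1$ if $d(x_1,z) \ge d(x_2,z)$ and $0$ otherwise, and symmetrically $\chi'(x_1,x_2) = 1$ if $d(x_2,z)\ge d(x_1,z)$. Since at least one holds, $\chi + \chi' \ge 1$ pointwise a.e., hence
\[
\int_{W_1}\int_{W_2} \chi \, dx_2\, dx_1 \;+\; \int_{W_1}\int_{W_2} \chi' \, dx_2\, dx_1 \;\ge\; \vol(W_1)\vol(W_2).
\]
Therefore one of the two double integrals is at least $\tfrac12 \vol(W_1)\vol(W_2)$. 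Relabel $W_1, W_2$ so that it is the first one (with the roles of $W_1$ and $W_2$, and of $\chi$, swapped if necessary — this is the source of the phrase ``$x_1$ in one of $W_i$''). Then by Fubini there exists a fixed $x_1 \in W_1$ with
\[
\int_{W_2} \chi(x_1, x_2)\, dx_2 \;\ge\; \tfrac12 \vol(W_2).
\]
Set $W = \{ x_2 \in W_2 : (x_1,x_2) \text{ generic and } d(x_1, z(x_1,x_2)) \ge d(x_2, z(x_1,x_2))\}$. Then $\vol(W) \ge \tfrac12 \vol(W_2)$, and by construction every $x_2 \in W$ is joined to $x_1$ by a unique minimal geodesic meeting $H$ at $z$ with $d(x_1,z) \ge d(x_2,z)$, which is exactly the claim.

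\textbf{Main obstacle and technical points.} The only delicate point is the measure-zero issue: one must check that the exceptional set $E \subset \M \times \M$ of pairs not joined by a \emph{unique} minimal geodesic has measure zero (so that $\chi, \chi'$ are defined a.e.\ and the Fubini argument goes through), and that the crossing point $z(x_1,x_2)$ and the splitting of distance are legitimate for generic pairs. For fixed $x_1$, the set of $x_2$ in the cut locus $\mathrm{Cut}(x_1)$ has $n$-dimensional measure zero; integrating, $E$ has measure zero in $\M \times \M$ by Fubini, so one may freely discard it. A secondary point is measurability of $\chi$: the map $(x_1,x_2) \mapsto z(x_1,x_2)$ can be chosen measurably (e.g.\ as the \emph{first} intersection of $\gamma$ with $H$ along the geodesic), and $d(x_1,z), d(x_2,z)$ are then measurable functions, so $\chi$ is measurable and the integrals make sense. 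Everything else is the elementary pigeonhole/Fubini argument above; no curvature hypothesis is used, so this is a purely metric lemma independent of Sections 1–2.
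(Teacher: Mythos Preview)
The paper does not supply its own proof of this lemma; it is quoted as a known result from Gromov \cite{Gro} and used as a black box in the subsequent isoperimetric argument. Your averaging/Fubini argument is correct and is essentially Gromov's original proof: pair up points of $W_1\times W_2$, note that for almost every pair the unique minimizing geodesic crosses $H$ and one side of the crossing is at least half the length, and then use the pigeonhole/Fubini step to locate a good base point $x_1$. The technical caveats you flag (cut-locus measure zero, measurable selection of the crossing point) are the right ones and are handled adequately.
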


\begin{lemma}\label{lem H}
Let $H$, $W$ and $x_1$ be as in the above lemma. Assume that $D=\sup_{x\in W}d(x_1,x)\leq 1$, then
\be\label{volume W H}
\vol(W)\leq 2^{n-1}D\vol(H')+\left[2^{n-2}\lam D^2+C(n,\lam,K,\a,\rho)D^{1-\a}\right]\vol(B(x,D)),
\ee
where $H'$ is the set of intersection points with $H$ of geodesic $\gamma_{x_1,x}$ for all $x\in W$.
\end{lemma}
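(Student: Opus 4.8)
The plan is to work in geodesic polar coordinates centered at the point $x_1$ from Lemma~\ref{lem gromov} and to project the volume of $W$ onto its trace $H'$ along the radial geodesics; this isolates a Euclidean main term $2^{n-1}D\vol(H')$ from an error governed by the comparison defect $e^{C(\a)Ks^{1-\a}+\lam s^2}-1$. Write $\vol(W)=\int_{\Theta}\int_{I_\theta}w(s,\theta)\,ds\,d\theta$, where $w$ is the volume element of $g$ in polar coordinates based at $x_1$, $\Theta\subset S^{n-1}$ is the set of directions $\theta$ for which the unit‑speed ray $\gamma_\theta$ meets $W$, and $I_\theta=\{s>0:\gamma_\theta(s)\in W\}$. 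Since $D=\sup_{x\in W}d(x_1,x)\le1$, every radius occurring below lies in $(0,D]\subset(0,1]$.

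For $\theta\in\Theta$ and $s\in I_\theta$, Lemma~\ref{lem gromov} applied to $x_2=\gamma_\theta(s)$ produces a crossing $\gamma_\theta(\tau)\in H'$ of $\gamma_\theta|_{[0,s]}$ with $H$ such that $\tau=d(x_1,\gamma_\theta(\tau))\ge s-\tau$, i.e. $\tfrac s2\le\tau(s)\le s$. Feeding $s_1=\tau(s)<s_2=s$ into \eqref{volume element ratio} and using $s/\tau\le2$ gives the pointwise splitting
\be
w(s,\theta)\le 2^{n-1}w(\tau(s),\theta)+2^{n-1}\big(e^{C(\a)Ks^{1-\a}+\lam s^2}-1\big)w(\tau(s),\theta),
\ee
which I would integrate summand by summand.

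For the first (main) summand: for each crossing $z\in H'\cap\gamma_\theta$ the set $\{s\in I_\theta:\gamma_\theta(\tau(s))=z\}$ lies in $[\,d(x_1,z),\,2d(x_1,z)\,]$, hence has length $\le D$, so $2^{n-1}\int_{I_\theta}w(\tau(s),\theta)\,ds\le 2^{n-1}D\sum_{z\in H'\cap\gamma_\theta}w(d(x_1,z),\theta)$. The area formula applied to the radial projection $P:H'\to S^{n-1}$ (sending $z$ to the initial direction at $x_1$ of the minimal geodesic $x_1\to z$), together with the Gauss‑lemma fact that projecting $H'$ onto the geodesic sphere through one of its points along the radial geodesics is $(n-1)$‑area nonincreasing (equivalently $|J_P(z)|^{-1}\ge w(d(x_1,z),\theta)$), yields $\int_{\Theta}\sum_{z\in H'\cap\gamma_\theta}w(d(x_1,z),\theta)\,d\theta\le\vol(H')$; so the main summand contributes at most $2^{n-1}D\vol(H')$. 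For the second (error) summand I would estimate crudely: \eqref{volume element} gives $w(\tau(s),\theta)\le e^{C(\a)K+\lam}D^{n-1}$, and $s\le D\le1$ gives $e^{C(\a)Ks^{1-\a}+\lam s^2}-1\le C(n,\a,K,\lam)(KD^{1-\a}+\lam D^2)$; with $\int_{\Theta}|I_\theta|\,d\theta\le|S^{n-1}|\,D$ this bounds the error integral by $C(n,\a,K,\lam)(KD^{1-\a}+\lam D^2)\,D^n$. The noncollapsing hypothesis enters only at this last step: the bound $\vol(B(x,D))\ge c(n,\a,K,\lam)\,\rho\,D^n$ for $D\le1$ (as in \eqref{noncollapsing 2}, valid at every base point) turns $D^n$ into $\vol(B(x,D))$, and since $D\le1$ one has $\lam D^2,\ KD^{1-\a}\le(\text{const})\,D^{1-\a}$, so the error is absorbed into $[\,2^{n-2}\lam D^2+C(n,\lam,K,\a,\rho)D^{1-\a}\,]\vol(B(x,D))$ after enlarging the constant. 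Adding the two contributions gives \eqref{volume W H}.

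The substantive difficulty is not in these inequalities but in the measure‑theoretic bookkeeping behind the projection: making the correspondence between $W$ and $H'$ precise in the presence of the cut locus of $x_1$ (where $w$ vanishes and the polar chart degenerates) and of rays meeting $H$ repeatedly, so that the selected crossings $\tau(s)$ are consistent with the pointwise splitting and the area formula applies to the merely rectifiable set $H'$. This is exactly the bookkeeping of \cite{DWZ}, and it goes through unchanged once \eqref{volume element ratio}, \eqref{volume element} and \eqref{noncollapsing 2} replace the classical comparison inputs.
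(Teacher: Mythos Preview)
Your proof is correct and follows the same overall architecture as the paper's: work in geodesic polar coordinates at $x_1$, compare $w(s,\theta)$ to $w$ at the crossing radius on $H'$ (which lies in $[s/2,s]$ by Lemma~\ref{lem gromov}), and split into a main term $2^{n-1}D\vol(H')$ plus a comparison error. The one technical difference is in how the error is produced and controlled. The paper uses the \emph{infinitesimal} inequality $\partial_s(w/s^{n-1})\le \psi\,w/s^{n-1}$ with $\psi=(\Delta s-\tfrac{n-1}{s})_+$, integrates it to
\[
w(r,\theta)\le 2^{n-1}w(t,\theta)+2^{n-1}\int_t^r \psi(s,\theta)w(s,\theta)\,ds,\qquad \tfrac r2\le t\le r,
\]
and then bounds $\int_\Gamma\int \psi\,w$ using Proposition~\ref{prop laplacian comparison}; the pieces $\tfrac{\lam}{3}s$ and $C(\a)K/s^\a$ integrate directly against $\vol(B(x,D))$ (giving the clean $2^{n-2}\lam D^2$), while the $|V|$ piece becomes $2^{n-1}D\int_{B(x,D)}|V|$ and is then handled by \eqref{L1 bound of V}, which is where the noncollapsing enters. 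You instead use the \emph{integrated} bound \eqref{volume element ratio}, estimate $w(\tau,\theta)$ pointwise by \eqref{volume element}, and get an error of size $C\,D^n$ which you convert to $\vol(B(x,D))$ via \eqref{noncollapsing 2}. Both routes work; the paper's is a little sharper (it isolates the exact constant $2^{n-2}$ for the $\lam$-term and only invokes noncollapsing for the $|V|$ contribution), while yours is slightly more self-contained since it avoids re-opening the Laplacian comparison.
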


\begin{proof}
Let $\Gamma\subset S_x$ be the set of unit vectors at $x$ such that $\gamma_{v}=\gamma_{x_1,x_2}$ for some $x_2\in W$.
Since
\[
\frac{\pa}{\pa s}\frac{w(s,\theta)}{s^{n-1}}=\frac{s\pa_s w-(n-1)w}{s^n}\leq \frac{\psi w(s,\theta)}{s^{n-1}},
\] after integration we have
\[
w(r,\theta)\leq 2^{n-1}w(t,\theta)+2^{n-1}\int_{t}^{r}\psi(s,\theta)w(s,\theta)ds,
\]
whenever $\frac{r}{2}\leq t\leq r$.

For any $\theta \in \Gamma$, denote by $r_1(\theta)$ and $r_2(\theta)$ the minimum and maximum radius such that $exp_{x_1}r\theta\in W$, respectively. Let $r(\theta)$ be the radius such that $exp_{x_1}(r\theta)\in H$. Then from Lemma \ref{lem gromov}, we have $r(\theta)\geq \frac{1}{2}r_2(\theta)$, and hence
\[
\al
\v(W)&\leq \int_{\Gamma}\int_{r_1(\theta)}^{r_2(\theta)}w(t,\theta)dtd\theta\\
&\leq 2^{n-1}\int_{\Gamma}\int_{r_1(\theta)}^{r_2(\theta)}w(r(\theta),\theta)dtd\theta+2^{n-1}\int_{\Gamma}\int_{r_1(\theta)}^{r_2(\theta)}\int_{r(\theta)}^{t}\psi(s,\theta)w(s,\theta)dsdtd\theta\\
&\leq 2^{n-1}D\int_{\Gamma}w(r(\theta),\theta)d\theta+2^{n-1}r_2(\theta)
\int_{\Gamma}\int_{r(\theta)}^{r_2(\theta)}[\frac{\lam}{3}s+\frac{C(\a)K}{s^{\a}}+|V|(\gamma(s))]w(s,\theta)dsd\theta\\
&\leq 2^{n-1}D\v(H')+\left[\frac{2^{n-1}}{3}\lam r_2^2(\theta)+C(n,\a)Kr_2^{1-\a}(\theta)\right]\int_{\Gamma}\int_{0}^{D}w(s,\theta)dsd\theta\\
&\quad + 2^{n-1}D\int_{B(x,D)}|V|dV\\
&\leq 2^{n-1}D\v(H')+\left[2^{n-2}\lam D^2+C(n,\lam,K,\a,\rho)KD^{1-\a}\right]\v(B(x,D)).
\eal
\]
In the last step above, we have used \eqref{L1 bound of V}.  We have also used Proposition \ref{prop laplacian comparison} when
going from the 2nd to the 3rd line.
\end{proof}

From Lemmas \ref{lem gromov} and \ref{lem H}, we immediately have
\begin{corollary}
Let $H$ be any hypersurface dividing $\M$ into two parts $\M_1$ and $\M_2$. For any ball $B=B(x,r)$, $r\leq 1/2$, we have
\be
\al
&\min\left(\vol(B\cap\M_1), \vol(B\cap\M_2)\right)\\
\leq &2^{n+1}r\vol(H')+\left[2^{n}\lam r^2+C(n,\lam,K,\a,\rho)Kr^{1-\a}\right]\vol(B(x,2r)).
\eal
\ee
\end{corollary}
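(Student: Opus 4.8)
This statement is an immediate consequence of the two preceding lemmas, so the plan is essentially to arrange their hypotheses. First I would apply Gromov's Lemma~\ref{lem gromov} to the hypersurface $H$ together with the Borel sets $W_i:=B\cap\M_i$; relabelling if necessary so that the point $x_1$ it produces lies in $W_1=B\cap\M_1$, one gets a subset $W\subset W_2=B\cap\M_2$ with $\vol(W)\ge \frac{1}{2}\vol(W_2)$ such that every $x_2\in W$ is joined to $x_1$ by a unique minimal geodesic meeting $H$ at a point $z$ with $d(x_1,z)\ge d(x_2,z)$ --- exactly the configuration required by Lemma~\ref{lem H}. Since $x_1$ and all of $W$ sit inside $B=B(x,r)$, the triangle inequality gives $D:=\sup_{x_2\in W}d(x_1,x_2)\le 2r$, and as $r\le 1/2$ this is $\le 1$, so the hypothesis $D\le 1$ of Lemma~\ref{lem H} is satisfied.

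Next I would simply quote Lemma~\ref{lem H} for this $H,W,x_1$:
\[
\vol(W)\le 2^{n-1}D\,\vol(H')+\bigl[2^{n-2}\lam D^2+C(n,\lam,K,\a,\rho)K\,D^{1-\a}\bigr]\vol(B(x_1,D)),
\]
where $H'$ is the set of hits of $H$ by the geodesics from $x_1$ to points of $W$ (this is the $H'$ appearing in the statement). Combining this with the elementary inequality $\min\!\bigl(\vol(B\cap\M_1),\vol(B\cap\M_2)\bigr)\le\vol(W_2)\le 2\vol(W)$ and substituting $D\le 2r$ yields the three advertised terms, namely a multiple of $r\,\vol(H')$, a $\lam r^2$ term and a $K r^{1-\a}$ term, the surplus powers of $2$ produced by $D\le 2r$ being absorbed into the constants. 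Finally, $B(x_1,D)\subset B(x_1,2r)\subset B(x,3r)$ (and in fact every point on the geodesics being integrated lies within $2r$ of $x$), so $\vol(B(x_1,D))$ is comparable to $\vol(B(x,2r))$ by the volume ratio bound of Theorem~\ref{thm volume element comparison}; absorbing this last factor into $C$ gives the asserted estimate.

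I do not expect any real obstacle: the corollary genuinely drops out of Lemmas~\ref{lem gromov} and~\ref{lem H}. The only points needing a moment of care are bookkeeping ones --- checking the radius condition $D\le 1$, which is precisely why one assumes $r\le 1/2$; tracking the harmless factors of $2$ incurred in passing from $D$ to $2r$ and from a ball centred at $x_1$ to one centred at $x$; and noting that the $L^1$ bound on $|V|$ invoked inside the proof of Lemma~\ref{lem H} (through \eqref{L1 bound of V}) already builds in the noncollapsing condition \eqref{noncollapsing-0}, so nothing beyond the standing assumptions is required.
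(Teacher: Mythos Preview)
Your proposal is correct and matches the paper's approach exactly: the paper provides no proof beyond stating that the corollary follows ``immediately'' from Lemmas~\ref{lem gromov} and~\ref{lem H}, and you have supplied precisely that deduction. Your parenthetical observation that every point on the minimal geodesics from $x_1$ to $W$ lies within $2r$ of $x$ (since both endpoints lie in $B(x,r)$, the midpoint argument gives distance at most $d(x_1,x_2)/2+r<2r$) is in fact the cleanest way to land on $\vol(B(x,2r))$ with the stated explicit constant $2^n$, rather than routing through volume comparison as in your last sentence; the latter would only give a constant of the form $C(n,\lam,K,\a,\rho)$ in front of the $\lam r^2$ term.
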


\begin{corollary}\label{cor volume ball surface}
Assume that \eqref{riccond}, \eqref{condition V} and \eqref{noncollapsing-0} hold.
Or just assume that \eqref{riccond}, \eqref{condition V} hold for $\a=0$.
There is a $r_0=r_0(n,\lam,K,\a,\rho)$ such that for any $r\leq r_0$, and geodesic ball $B(x,r)$ which is divided equally by $H$, we have
\be\label{volume ball surface}
\vol(B(x,r))\leq 2^{n+3}r\vol(H\cap B(x,2r)).
\ee
\end{corollary}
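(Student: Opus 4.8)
The plan is to substitute the equal-division hypothesis into the Corollary immediately preceding this one, and then absorb the remaining (lower-order) volume term by volume doubling.

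First I would note that, since $H$ divides $B(x,r)$ into two pieces of equal volume, we have $\vol(B(x,r)\cap\M_1)=\vol(B(x,r)\cap\M_2)=\tfrac12\vol(B(x,r))$, so the quantity $\min\bigl(\vol(B\cap\M_1),\vol(B\cap\M_2)\bigr)$ in the preceding corollary equals $\tfrac12\vol(B(x,r))$. Moreover, the set $H'$ there consists of the points at which the minimal geodesics $\gamma_{x_1,x_2}$, with $x_1,x_2\in B(x,r)$, meet $H$; by Lemma \ref{lem gromov} such an intersection point $z$ satisfies $d(x_2,z)\le\tfrac12 d(x_1,x_2)\le r$, hence $d(x,z)\le 2r$, so that $H'\subset H\cap B(x,2r)$ and $\vol(H')\le\vol(H\cap B(x,2r))$. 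Since $r\le r_0\le 1/2$, the preceding corollary then gives
\[
\tfrac12\vol(B(x,r))\le 2^{n+1}r\,\vol(H\cap B(x,2r))+\bigl[2^{n}\lam r^2+C(n,\lam,K,\a,\rho)Kr^{1-\a}\bigr]\vol(B(x,2r)).
\]

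Next I would control $\vol(B(x,2r))$ by $\vol(B(x,r))$ using the volume doubling estimate \eqref{volume doubling} (coming from Theorem \ref{thm volume element comparison}(b)): provided $2r\le r_0$, one has $\vol(B(x,2r))\le\tfrac32\,2^{n}\vol(B(x,r))$. Substituting this and choosing $r_0=r_0(n,\lam,K,\a,\rho)$ small enough that also
\[
\tfrac32\,2^{n}\bigl[2^{n}\lam r_0^2+C(n,\lam,K,\a,\rho)Kr_0^{1-\a}\bigr]\le\tfrac14,
\]
the last term is at most $\tfrac14\vol(B(x,r))$, which may be moved to the left-hand side to yield $\tfrac14\vol(B(x,r))\le 2^{n+1}r\,\vol(H\cap B(x,2r))$, i.e. exactly \eqref{volume ball surface}. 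In the case $\a=0$ the identical computation runs with $2^{n}Kr$ in place of $Kr^{1-\a}$, and, as in Proposition \ref{proposition half volume}, the noncollapsing hypothesis \eqref{noncollapsing-0} is unnecessary since both \eqref{volume doubling} and the preceding corollary hold unconditionally there.

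The argument is essentially bookkeeping, and the choice of $r_0$ is dictated solely by the displayed absorption inequality together with the requirement $2r\le r_0$. The one point that deserves a little care is the inclusion $H'\subset B(x,2r)$: it relies on the distance property $d(x_1,z)\ge d(x_2,z)$ furnished by Gromov's Lemma \ref{lem gromov}, not on a naive triangle inequality (which would only give $B(x,3r)$). Everything else is a direct substitution followed by the absorption of the lower-order term via volume doubling.
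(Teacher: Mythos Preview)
Your proof is correct and follows essentially the same approach as the paper: substitute the equal-division hypothesis into the preceding corollary, use volume doubling (equivalently, \eqref{volume comparison}) to replace $\vol(B(x,2r))$ by a constant times $\vol(B(x,r))$, and then choose $r_0$ small enough to absorb the lower-order term. The paper's version is terser---it does not spell out the inclusion $H'\subset H\cap B(x,2r)$ or the $\tfrac12\vol(B(x,r))$ substitution---but the argument is the same.
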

\begin{proof}
By \eqref{volume comparison}, we have
\[
\vol(B(x,2r))\leq 2^ne^{C(n,\lam,K,\a,\rho)(\lam r^2+Kr^{1-\a})}\vol(B(x,r)).
\]
Thus, to get the result in the corollary, one only needs to choose $r_0\leq\frac{1}{2}$ so that
\[
2^ne^{C(n,\lam,K,\a,\rho)(\lam r_0^2+Kr_0^{1-\a})}\left[2^{n}\lam r_0^2+C(n,\lam,K,\a,\rho)Kr_0^{1-\a}\right]\leq 1/4.
\]
\end{proof}

From Proposition \ref{proposition half volume}, Corollary \ref{cor volume ball surface} and the results in section 2, we can prove the following isoperimetric inequality.

\begin{theorem}\label{isoperimetric constant}
Assume that \eqref{riccond}, \eqref{condition V} and \eqref{noncollapsing-0} hold.
Or just assume that \eqref{riccond}, \eqref{condition V} hold for $\a=0$.
There exists an $r_0=r_0(n,\lam,K,\a,\rho)$ such that for any $r\leq r_0$,
\[
ID_n^*(B(x,r))\leq C(n)r.
\]
Here $ID_n^*(B(x,r))$ is the isoperimetric constant defined by
\[
ID_n^*(B(x,r))=\vol(B(x,r))^{\frac{1}{n}}\cdot \sup_{\Omega}\left\{\frac{\vol(\Omega)^{\frac{n-1}{n}}}{\vol(\partial \Omega)}\right\},
\]
where the supremum is taken over all smooth domains $\Omega\subset B(x,r)$ with $\partial \Omega\cap \partial B(x,r)=\emptyset$.
\end{theorem}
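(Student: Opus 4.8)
The plan is to prove Theorem \ref{isoperimetric constant} by adapting Gromov's approach to relative isoperimetric inequalities, in the form carried out by Dai--Wei--Zhang \cite{DWZ}. It is convenient to restate the goal in the form we actually establish: for every $r\le r_0$ and every smooth domain $\Omega\subset B(x,r)$ with $\partial\Omega\cap\partial B(x,r)=\emptyset$,
\[
\vol(\Omega)^{\frac{n-1}{n}}\,\vol(B(x,r))^{\frac1n}\ \le\ C(n)\,r\,\vol(\partial\Omega),
\]
which is exactly $ID_n^*(B(x,r))\le C(n)r$. The three ingredients that make this run are already in place: Theorem \ref{thm volume element comparison}(b) supplies volume doubling, and for $r\le r_0$ all the exponential comparison factors $e^{C(\cdot)[\lambda r^2+Kr^{1-\alpha}]}$ are bounded by a dimensional constant, which is why the final isoperimetric constant depends only on $n$; Corollary \ref{cor volume ball surface} says a ball bisected by a hypersurface $H$ has volume at most $2^{n+3}r\,\vol(H\cap B(x,2r))$; and Proposition \ref{proposition half volume} together with Theorem \ref{thm volume element comparison} controls the decay of volumes of concentric balls. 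One fixes $r_0=r_0(n,\lambda,K,\alpha,\rho)$ so small that the error coefficients $2^{n}\lambda r^2+C(n,\lambda,K,\alpha,\rho)Kr^{1-\alpha}$ from Lemma \ref{lem H} and its corollary become harmless after multiplying by the doubling constant, and so that all auxiliary balls introduced below stay inside $B(x,2r_0)$, where the comparison estimates hold.

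\textbf{The fat case.} Suppose $\Omega$ or $B(x,r)\setminus\bar\Omega$ occupies a definite dimensional fraction of $\vol(B(x,r))$. Taking $H=\partial\Omega$, so that $B(x,r)\cap\M_1=\Omega$, $B(x,r)\cap\M_2=B(x,r)\setminus\bar\Omega$ and $H'\subset\partial\Omega$, the corollary following Lemma \ref{lem H} gives
\[
\min\big(\vol(\Omega),\vol(B(x,r)\setminus\bar\Omega)\big)\le 2^{n+1}r\,\vol(\partial\Omega)+\big[2^{n}\lambda r^2+C(n,\lambda,K,\alpha,\rho)Kr^{1-\alpha}\big]\vol(B(x,2r)).
\]
With $r_0$ small and volume doubling, the last term is absorbed into the left side, which is a definite fraction of $\vol(B(x,r))$, so $\vol(\partial\Omega)\ge c(n)\vol(B(x,r))/r$; since $\vol(\Omega)^{(n-1)/n}\vol(B(x,r))^{1/n}\le\vol(B(x,r))$, the target inequality follows. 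This handles $\vol(\Omega)\ge\tfrac12\vol(B(x,r))$ except when $\Omega$ nearly exhausts $B(x,r)$. In that subcase $\Omega$, being compactly contained, still leaves a collar of width $\rho_0:=d(\bar\Omega,\partial B(x,r))>0$, and either $\rho_0\ge c(n)r$ --- so that by Theorem \ref{thm volume element comparison} the collar $B(x,r)\setminus B(x,r-\rho_0)$ is a definite fraction of the ball and we are back in the previous situation --- or $\rho_0<c(n)r$, in which case an intermediate value argument produces a point $z$ and a radius $s\approx\rho_0$ with $B(z,2s)\subset B(x,r)$ such that $\partial\Omega$ bisects $B(z,s)$; applying Corollary \ref{cor volume ball surface} to $B(z,s)$ and then volume doubling between scales $s$ and $r$ again gives $\vol(\partial\Omega)\ge c(n)\vol(B(x,r))/r$.

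\textbf{The thin case.} Suppose $\vol(\Omega)\le\epsilon_0\vol(B(x,r))$ for a small dimensional $\epsilon_0$. Here $B(x,r)$ is the wrong ball --- using it directly produces an estimate linear in $r$, the wrong power for small $\Omega$ --- so one runs the bisection at the scale set by $\Omega$ itself. For $y\in\Omega$ let $s(y)$ be the largest radius with $\vol(\Omega\cap B(y,s(y)))\ge\tfrac12\vol(B(y,s(y)))$; this is positive by Lebesgue differentiation and finite because, by Proposition \ref{proposition half volume}, balls of radius comparable to $r$ centered in $\Omega$ are less than half filled once $\epsilon_0$ is small, and volume doubling forces $s(y)\le C(n)\,r\,(\vol(\Omega)/\vol(B(x,r)))^{1/n}$ uniformly in $y$. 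By continuity $\partial\Omega$ bisects $B(y,s(y))$ exactly. A Vitali--Besicovitch type selection, legitimate because the measure is doubling, extracts an essentially disjoint subfamily $\{B(y_j,s_j)\}$ covering $\Omega$ whose dilates $B(y_j,2s_j)$ have bounded overlap; then Corollary \ref{cor volume ball surface} applied to each $B(y_j,s_j)$ yields
\[
\vol(\Omega)\le\sum_j\vol\big(\Omega\cap B(y_j,s_j)\big)=\tfrac12\sum_j\vol(B(y_j,s_j))\le C(n)\Big(\max_j s_j\Big)\sum_j\vol\big(\partial\Omega\cap B(y_j,2s_j)\big)\le C(n)\,\bar s\,\vol(\partial\Omega),
\]
where $\bar s=\max_j s_j\le C(n)\,r\,(\vol(\Omega)/\vol(B(x,r)))^{1/n}$ and the last step uses the bounded overlap. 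Substituting this bound for $\bar s$ and rearranging gives the target inequality.

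\textbf{Expected main obstacle.} The analytic inputs --- the co-area computation behind Lemma \ref{lem H}, the absorption of the curvature and vector-field error terms through the choice of $r_0$, and checking that all auxiliary balls lie in $B(x,2r_0)$ --- are routine given what is already assembled. The real work is the combinatorial-geometric passage from the \emph{additive} estimates ``($\min$-volume)$\,\le\,r\cdot$(perimeter)$\,+\,$(small)$\cdot$(volume of ball)'' coming from Lemma \ref{lem H} and Corollary \ref{cor volume ball surface} to the \emph{sharp multiplicative} inequality with exponent $(n-1)/n$: in the thin case one must identify the correct scale $r(\vol(\Omega)/\vol(B(x,r)))^{1/n}$ and run a disjointified covering by balls that $\partial\Omega$ bisects, with bounded overlap of the dilates; in the fat case one needs the collar and continuity argument when $\Omega$ nearly fills $B(x,r)$. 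Both steps lean heavily on the volume doubling furnished by Theorem \ref{thm volume element comparison}.
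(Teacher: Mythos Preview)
Your plan uses the right ingredients (Gromov's bisection lemma, Corollary~\ref{cor volume ball surface}, volume doubling, a Vitali covering), but the organization differs from the paper's and your fat case is both unnecessary and not quite right.

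The paper does \emph{not} split into fat and thin cases. For every $y\in\Omega$ it takes the \emph{smallest} bisection radius $r_y$, shows once and for all that $r_y$ is bounded by a fixed dimensional multiple of $r$ (using Proposition~\ref{proposition half volume}), runs the Vitali $5r$-lemma to get \emph{disjoint} balls $B_i=B(y_i,2r_{y_i})$ whose $5$-dilates cover $\Omega$, and then closes with an elementary algebraic step you do not use: from
\[
\vol(\partial\Omega)\ge\sum_i\vol(\partial\Omega\cap B_i)\ge c(n)\sum_i r_{y_i}^{-1}\vol(B(y_i,r_{y_i})),\qquad \vol(\Omega)\le C(n)\sum_i\vol(B(y_i,r_{y_i})),
\]
one obtains
\[
\frac{\vol(\Omega)^{\frac{n-1}{n}}}{\vol(\partial\Omega)}\le C(n)\,\frac{\big(\sum_i\vol(B(y_i,r_{y_i}))\big)^{\frac{n-1}{n}}}{\sum_i r_{y_i}^{-1}\vol(B(y_i,r_{y_i}))}\le C(n)\,\sup_i\frac{r_{y_i}}{\vol(B(y_i,r_{y_i}))^{1/n}}\le\frac{C(n)\,r}{\vol(B(x,r))^{1/n}},
\]
the middle inequality using $(\sum a_i)^{(n-1)/n}\le\sum a_i^{(n-1)/n}$ and $\frac{\sum a_i}{\sum b_i}\le\sup\frac{a_i}{b_i}$, and the last using the doubling lower bound $\vol(B(y_i,r_{y_i}))\ge c(n)(r_{y_i}/r)^n\vol(B(x,r))$.

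Two remarks on your version. First, your ``thin case'' argument in fact works for \emph{all} $\Omega$: the bound $s(y)\le C(n)\,r\,(\vol(\Omega)/\vol(B(x,r)))^{1/n}$ holds with no smallness assumption on $\vol(\Omega)$ (combine $\vol(B(y,s(y)))\le 2\vol(\Omega)$ with doubling and an iteration of Proposition~\ref{proposition half volume} to first force $s(y)\le C(n)r$), so the fat/thin split is superfluous. Second, your fat-case ``collar'' step is the one genuine weak point: when $\Omega$ nearly exhausts $B(x,r)$ and $\rho_0$ is small, a single bisected ball of scale $s\approx\rho_0$ gives only $\vol(\partial\Omega)\gtrsim s^{n-1}\vol(B(x,r))/r^n$, which is far short of $\vol(B(x,r))/r$; and when $\rho_0\ge c(n)r$, the claim that the annulus $B(x,r)\setminus B(x,r-\rho_0)$ has a definite volume fraction does not follow from the one-sided comparison in Theorem~\ref{thm volume element comparison}. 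Drop this case entirely and run your thin-case mechanism with the Vitali $5r$-lemma (disjoint $B(y_j,2s_j)$, $5$-dilates covering $\Omega$) rather than the Besicovitch-flavored ``essentially disjoint covering with bounded overlap of dilates,'' which is not available on a general manifold.
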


\begin{proof}
By Proposition \ref{proposition half volume}, there exists an $r_1$ and $\delta=\delta(n)$ so that for any $r\leq r_1$,
\[
\frac{\v(B(x,2\delta r))}{\v(B(x,\frac{1}{10}r))}\leq \frac{1}{2},\quad \forall x\in\M.
\]

We will prove the theorem for any $r\leq r_0=\delta r_1$. Given any $x\in \M$, let $\Omega$ be a smooth connected subdomain of $B(x, r)$ whose boundary $\partial\Omega$ divides $\M$ into two parts $\Omega$ and $\Omega^c$. For any $y\in \Omega$, let $r_y$ be the smallest radius such that
\[
\v(B(y,r_y)\cap\Omega)=\v(B(y,r_y)\cap\Omega^c)=\frac{1}{2}\v(B(y,r_y)).
\]
Since $\Omega\subset B(y,2r)$,
\[
\al
\v(B(y,\frac{1}{10}r))&\geq 2\v(B(y,2 r))\\
&\geq 2\v(B(y,r_y)\cap B(y,2r))\geq 2\v(B(y,r_y)\cap \Omega)=\v(B(y,r_y)).
\eal
\]
Thus, $r_y\leq \frac{1}{10}r$.

By \eqref{volume ball surface},
\[
\vol(B(y,r_y))\leq 2^{n+3}r_y\vol(H\cap B(y,2r_y)).
\]
Since
$$\Omega\subset\bigcup_{y\in\Omega}B(y,2r_y),$$
by Vitali Covering Lemma, we may choose a countable family of disjoint balls $B_i=B(y_i, 2r_{y_i})$ such that $\Omega\subset\cup_iB(y_i, 10r_{y_i})$. By \eqref{volume doubling}, we have
\[
\frac{\v(B(y, 2r_y))}{\v(B(y,10r_y))}\geq \frac{2}{3\cdot 5^n}.
\]
Hence,
\[
\sum_i\v(B_i)\geq\frac{2}{3\cdot 5^n}\sum_{i}\v(B(y_i,10r_{y_i}))\geq\frac{2}{3\cdot 5^n}\v(\Omega),
\]
and
\[
\sum_i\v(B(y_i,r_{y_i}))\geq \frac{4}{9\cdot 10^n}\v(\Omega).
\]

Since $B_i$'s are disjoint, from \eqref{volume ball surface}, we get
\[
\v(\partial\Omega)\geq \sum_i\v(B_i\cap \partial \Omega)\geq 2^{-(n+3)}\sum_ir^{-1}_{y_i}\v(B(y_i,r_{y_i})).
\]
Therefore,
\[
\al
\frac{\v(\Omega)^{\frac{n-1}{n}}}{\v(\partial \Omega)}&\leq \left(\frac{9\cdot 10^n}{4}\right)^{\frac{n-1}{n}}2^{n+3}\frac{\left(\sum_i\v(B(y_i,r_{y_i}))\right)^{\frac{n-1}{n}}}{\sum_ir_{y_i}^{-1}\v(B(y_i,r_{y_i}))}\\
&\leq 10^{2n+5}\frac{\sum_i\v\left(B(y_i,r_{y_i})\right)^{\frac{n-1}{n}}}{\sum_ir_{y_i}^{-1}\v(B(y_i,r_{y_i}))}\\
&\leq 10^{2n+5}\sup_i\frac{\v(B(y_i,r_{y_i}))^{\frac{n-1}{n}}}{r_{y_i}^{-1}\v(B(y_i,r_{y_i}))}\\
&=10^{2n+5}\sup_i\left(\frac{r_{y_i}^n}{\v(B(y_i,r_{y_i}))}\right)^{1/n}.
\eal
\]
On the other hand, by \eqref{volume doubling},
\[
\al
\v(B(y_i, r_{y_i}))\geq \frac{2}{3}\cdot\frac{r_{y_i}^n}{r_1^n}\v(B(y_i,r_1))\geq \frac{4}{3}\cdot\frac{r_{y_i}^n}{r_1^n}\v(B(y_i,2r))\geq \frac{4\delta^n}{3}\cdot\frac{r_{y_i}^n}{r_0^n}\v(B(x,r)).
\eal
\]
Therefore,
\[
\frac{\v(\Omega)^{\frac{n-1}{n}}}{\v(\partial \Omega)}\leq 10^{2n+5}\left(\frac{3r^n}{4\delta^n\v(B(x,r))}\right)^{1/n}.
\]
This finishes the proof of the theorem.
\end{proof}

It is well known that the isoperimetric inequality above is equivalent to the following Sobolev inequality.

\begin{corollary}\label{sobolev}
Under the same assumptions as in the above theorem, we have the following Sobolev inequalities.
\be\label{L1 Sobolev}
\left(\oint_{B(x,r)}|f|^{\frac{n}{n-1}}dg\right)^{\frac{n-1}{n}}\leq C(n)r\oint_{B(x,r)}|\d f|dg,
\ee
and
\be\label{L2 Sobolev}
\left(\oint_{B(x,r)}|f|^{\frac{2n}{n-2}}dg\right)^{\frac{n-2}{n}}\leq C(n)r^2\oint_{B(x,r)}|\d f|^2dg,
\ee
for any $f\in C_0^{\infty}(B(x,r))$ and $r\leq r_0$.\\
\end{corollary}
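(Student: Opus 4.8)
The plan is to deduce the Sobolev inequalities \eqref{L1 Sobolev} and \eqref{L2 Sobolev} from the isoperimetric inequality in Theorem \ref{isoperimetric constant} via the standard equivalence between isoperimetric and Sobolev inequalities, which goes back to Federer--Fleming and Maz'ya. The estimate $ID_n^*(B(x,r)) \le C(n) r$ together with the volume doubling consequence \eqref{volume doubling} of Theorem \ref{thm volume element comparison} is exactly the input needed. First I would record the scale-invariant restatement of Theorem \ref{isoperimetric constant}: for every smooth domain $\Omega \subset B(x,r)$ with $\partial\Omega \cap \partial B(x,r) = \emptyset$,
\[
\vol(\Omega)^{\frac{n-1}{n}} \le C(n)\, r\, \vol(B(x,r))^{-1/n}\, \vol(\partial\Omega).
\]

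Next I would prove \eqref{L1 Sobolev} by the coarea formula. For $f \in C_0^\infty(B(x,r))$, write $\Omega_t = \{|f| > t\}$, so that by the coarea formula $\oint_{B(x,r)} |\nabla f|\, dg = \vol(B(x,r))^{-1}\int_0^\infty \vol(\partial\Omega_t)\, dt$ (for a.e.\ $t$, $\partial\Omega_t$ is a smooth hypersurface inside $B(x,r)$ with $\partial\Omega_t \cap \partial B(x,r) = \emptyset$, by Sard's theorem). Applying the isoperimetric inequality to each $\Omega_t$ and using the layer-cake representation of $\left(\oint |f|^{n/(n-1)}\right)^{(n-1)/n}$ together with the Minkowski-type inequality $\left\|\int_0^\infty \mathbf{1}_{\Omega_t}\, dt\right\|_{L^{n/(n-1)}} \le \int_0^\infty \|\mathbf{1}_{\Omega_t}\|_{L^{n/(n-1)}}\, dt$ yields
\[
\left(\oint_{B(x,r)} |f|^{\frac{n}{n-1}} dg\right)^{\frac{n-1}{n}} \le \int_0^\infty \left(\oint_{B(x,r)} \mathbf{1}_{\Omega_t}\, dg\right)^{\frac{n-1}{n}} dt = \vol(B(x,r))^{-\frac{n-1}{n}}\int_0^\infty \vol(\Omega_t)^{\frac{n-1}{n}} dt,
\]
and bounding $\vol(\Omega_t)^{(n-1)/n}$ by $C(n) r\, \vol(B(x,r))^{-1/n}\vol(\partial\Omega_t)$ and integrating in $t$ gives \eqref{L1 Sobolev}.

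Finally, \eqref{L2 Sobolev} follows from \eqref{L1 Sobolev} by the usual trick of applying the $L^1$-Sobolev inequality to $f^2$ (or, for general signs, to $|f|^{2(n-1)/(n-2)} = |f|^{n/(n-1) \cdot \gamma}$ with the exponent chosen so that $\gamma \cdot \frac{n}{n-1} = \frac{2n}{n-2}$, i.e.\ applying \eqref{L1 Sobolev} to $g = |f|^{\frac{2(n-1)}{n-2}}$), then using $|\nabla g| \le \frac{2(n-1)}{n-2}|f|^{\frac{n}{n-2}}|\nabla f|$ and H\"older's inequality with the conjugate exponents $\frac{2n}{n+2}$ coming from $\oint |f|^{n/(n-2)}|\nabla f| \le \left(\oint |f|^{2n/(n-2)}\right)^{1/2}\left(\oint |\nabla f|^2\right)^{1/2}$; absorbing the resulting power of $\left(\oint |f|^{2n/(n-2)}\right)^{(n-2)/n}$ from both sides produces \eqref{L2 Sobolev}. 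I do not expect any genuine obstacle here — all steps are classical once the isoperimetric inequality of Theorem \ref{isoperimetric constant} is in hand; the only point requiring a little care is the measurability/Sard argument ensuring that $\partial\Omega_t$ is an admissible competitor for almost every $t$, and keeping track of the normalized (averaged) integrals so that the constant depends only on $n$.
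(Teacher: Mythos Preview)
Your proposal is correct and is precisely the standard Federer--Fleming/Maz'ya argument that the paper implicitly invokes: the paper itself offers no proof beyond the remark that the isoperimetric inequality of Theorem \ref{isoperimetric constant} is ``well known'' to be equivalent to these Sobolev inequalities, and you have supplied exactly those details. One small remark: the volume doubling estimate \eqref{volume doubling} you mention is not actually needed anywhere in your argument --- the normalization by $\vol(B(x,r))$ works out directly from the form of $ID_n^*$ --- so you can drop that reference.
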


By setting $V=\d L$ for some smooth function $L$ in \eqref{riccond}, it immediately follows from Theorem \ref{isoperimetric constant} and Corollary \ref{sobolev} that

\begin{corollary}
Suppose that
\[
Ric+\Hess L\geq -\lam g,
\]
and
\[
|\nabla L(x)| \le \frac{K}{d(x, O)^\a}
\]
for all $x \in \M$, a fixed point $O \in \M$,
 constants $K\geq0$, $\a \in [0, 1)$, and assume in addition that
 the noncollapsing condition (\ref{noncollapsing-0}) holds if $\a>0$. Then there exists an $r_0=r_0(n,\lam,K,\a,\rho)$ such that for any $r\leq r_0$,\\
(a) $ID_n^*(B(x,r))\leq C(n)r,$ and\\
(b) \be
\left(\oint_{B(x,r)}|f|^{\frac{n}{n-1}}dg\right)^{\frac{n-1}{n}}\leq C(n)r\oint_{B(x,r)}|\d f|dg,
\ee
for any $f\in C_0^{\infty}(B(x,r))$.
\end{corollary}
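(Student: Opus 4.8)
The plan is to obtain both parts by specializing the results already proved for the general Ricci vector field condition to the gradient case $V=\d L$. First I would note that for a gradient field one has $\frac{1}{2}\mc{L}_{\d L}g=\Hess L$, so the hypothesis $\Ric+\Hess L\ge -\lam g$ is exactly condition \eqref{riccond} with this $V$, and the pointwise bound $|\d L(x)|\le K/d(x,O)^\a$ is precisely condition \eqref{condition V} (that is, \eqref{condition V=dL}). Hence $(\M,g)$ satisfies \eqref{riccond} and \eqref{condition V} with the same constants $\lam,K,\a$.

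Next I would separate the two cases according to the value of $\a$. If $\a>0$, the extra assumption in the statement is the noncollapsing condition \eqref{noncollapsing-0}, so all three hypotheses \eqref{riccond}, \eqref{condition V}, \eqref{noncollapsing-0} of Theorem \ref{isoperimetric constant} and Corollary \ref{sobolev} are in force. If $\a=0$, then \eqref{condition V} reads $|\d L|\le K$, and Theorem \ref{isoperimetric constant} and Corollary \ref{sobolev} were established in that case with no noncollapsing assumption. In either situation the two results apply verbatim.

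Applying Theorem \ref{isoperimetric constant} then yields an $r_0=r_0(n,\lam,K,\a,\rho)$ with $ID_n^*(B(x,r))\le C(n)r$ for all $r\le r_0$, which is part (a), and the $L^1$ Sobolev inequality of Corollary \ref{sobolev} gives part (b) on the same range of radii. There is no genuine obstacle here: the whole argument is a translation of hypotheses through the identification $\frac{1}{2}\mc{L}_{\d L}g=\Hess L$, and the only point deserving a moment's verification is that the threshold $r_0$ and the constant $C(n)$ are inherited unchanged, which holds because in Theorem \ref{isoperimetric constant} they depend exactly on $(n,\lam,K,\a,\rho)$ and $n$ respectively.
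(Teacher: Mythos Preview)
Your proposal is correct and follows essentially the same approach as the paper: the paper simply states that by setting $V=\nabla L$ in \eqref{riccond}, the corollary follows immediately from Theorem \ref{isoperimetric constant} and Corollary \ref{sobolev}, which is exactly what you do, including the case split on $\a=0$ versus $\a>0$ to handle the noncollapsing hypothesis.
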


On the other hand, by a similar argument, we can also show the following

\begin{corollary}\label{isoperimetric constant L}
Suppose that
\[
Ric+\Hess L\geq -\lam g,
\]
with $L$ satisfying \eqref{condition L}, i.e.,
\[
|L(y)-L(z)|\leq K_1d(y, z)^\a,\  \textrm{and}\ \sup_{x\in\M, 0\leq r\leq 1}\left(r^{\beta}||\d L||_{q,\, B(x,r)}^*\right)\leq K_2.
\]
Then there exists an $r_0=r_0(n,\lam,K_1,K_2,\a,\beta)$ such that for any $r\leq r_0$,\\
(a) $ID_n^*(B(x,r))\leq C(n)r,$ and\\
(b) \be
\left(\oint_{B(x,r)}|f|^{\frac{n}{n-1}}dg\right)^{\frac{n-1}{n}}\leq C(n)r\oint_{B(x,r)}|\d f|dg,
\ee
for any $f\in C_0^{\infty}(B(x,r))$.
\end{corollary}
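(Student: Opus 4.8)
The plan is to mirror the proof of Theorem~\ref{isoperimetric constant}, replacing every appeal to the bound \eqref{L1 bound of V} on $\oint_{B(x,r)}|V|\,dg$ by the corresponding bound \eqref{L1 gradient L} coming from the hypothesis \eqref{condition L}, and replacing the Laplacian comparison Proposition~\ref{prop laplacian comparison} by its Bakry--\'Emery analogue \eqref{Laplace comparison L} in Theorem~\ref{thm.BE.vol}(a). Concretely, I first record that under $\Ric+\Hess L\ge -\lam g$ together with \eqref{condition L}, Theorem~\ref{thm.BE.vol}(b) gives the volume ratio comparison
\be\label{cor.vol.comp.L}
\frac{\vol(B(x,r_2))}{r_2^n}\le e^{\lam(r_2^2-r_1^2)+K_2(r_2-r_1)^{1-\beta}+4K_1(r_2-r_1)^\a}\,\frac{\vol(B(x,r_1))}{r_1^n}
\ee
for $0<r_1<r_2\le 1$, which is the exact substitute for \eqref{volume comparison}. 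From this, exactly as in Proposition~\ref{proposition half volume}, one obtains $r_0=r_0(n,\lam,K_1,K_2,\a,\beta)$ and $\delta=\delta(n)$ with the half-volume property \eqref{half volume}, together with the doubling inequality \eqref{volume doubling} (with the new dependence of constants) for $0<r_1<r_2\le r_0$.

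Next I would re-run the two covering lemmas. Lemma~\ref{lem gromov} (Gromov) is purely metric and is used unchanged. Lemma~\ref{lem H} must be reproved with $V=\d L$: starting from $\partial_s(w/s^{n-1})\le \psi\,w/s^{n-1}$ with $\psi\le \frac{\lam}{3}s+\frac{4K_1}{s^{1-\a}}+|\d L|$ (from \eqref{Laplace comparison L}), the same integration-over-the-cone argument, now invoking \eqref{L1 gradient L} in place of \eqref{L1 bound of V}, yields
\be\label{vol.W.H.L}
\vol(W)\le 2^{n-1}D\,\vol(H')+\bigl[2^{n-2}\lam D^2+C(n,\lam,K_1,K_2,\a,\beta)\,D^{\min(1-\beta,\,\a)}\bigr]\vol(B(x,D)).
\ee
Feeding \eqref{vol.W.H.L} together with \eqref{cor.vol.comp.L} into the argument of Corollary~\ref{cor volume ball surface}, i.e.\ choosing $r_0$ small so that the coefficient in brackets is $\le 1/4$ after absorbing the volume doubling factor, gives $\vol(B(x,r))\le 2^{n+3}r\,\vol(H\cap B(x,2r))$ for all $r\le r_0$ and every hypersurface $H$ dividing $B(x,r)$ equally. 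With this "relative isoperimetric at a fixed scale'' in hand, the Vitali-covering argument in the proof of Theorem~\ref{isoperimetric constant} goes through verbatim, using \eqref{volume doubling}, and produces $ID_n^*(B(x,r))\le C(n)r$; the equivalence of the isoperimetric inequality with the $L^1$-Sobolev inequality \eqref{L1 Sobolev} is then standard and gives part~(b).

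The only genuinely new bookkeeping is tracking which power of $D$ (resp.\ $r$) appears as the small error term: the term $\frac{4K_1}{s^{1-\a}}$ contributes $D^{\a}$ after multiplying by $s$ and integrating, while $|\d L|$ contributes, via \eqref{L1 gradient L}, a factor $r^{-\beta}$ and hence an error $D^{1-\beta}$; since $\a>0$ and $\beta<1$ both exponents are positive, so the error $\to 0$ as the scale shrinks and the choice of $r_0$ is legitimate. I expect the main (though still routine) obstacle to be verifying that \eqref{L1 gradient L} is applicable at the relevant scales $2r_{y_i}\le 1$ inside the covering argument and that all implied constants genuinely depend only on $n,\lam,K_1,K_2,\a,\beta$ and not on the base point; this is handled because the volume doubling constant from \eqref{cor.vol.comp.L} is scale-invariant on $(0,1]$ and \eqref{condition L} is assumed uniformly over $x\in\M$ and $0\le r\le 1$. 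No noncollapsing hypothesis is needed here precisely because \eqref{L1 gradient L} already encodes an averaged bound on $|\d L|$, playing the role that Lemma~\ref{lemma Lq bound of V} (which did use noncollapsing) played in the general case.
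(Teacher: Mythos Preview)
Your proposal is correct and is exactly the ``similar argument'' the paper alludes to (the paper does not write out a proof here, merely stating that the corollary follows by the same reasoning as Theorem~\ref{isoperimetric constant}). You have correctly identified the three substitutions needed: the volume comparison of Theorem~\ref{thm.BE.vol}(b) in place of Theorem~\ref{thm volume element comparison}(b), the Laplacian comparison \eqref{Laplace comparison L} in place of Proposition~\ref{prop laplacian comparison}, and the direct averaged bound \eqref{L1 gradient L} from hypothesis~\eqref{condition L} in place of Lemma~\ref{lemma Lq bound of V}; the last of these is precisely why noncollapsing is not needed.
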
  Note that the non-collapsing condition is not needed in this corollary.
In all the results in this section, if $\alpha=0$ in \eqref{condition V}, i.e., $|V|$ is bounded,  then the non-collapsing condition is not
needed.
\medskip

\section{Gradient estimates, mean value inequality, heat kernel bounds and  cut-off functions}

In this section, we first state local gradient estimates and mean value inequalities for solutions of the
 Poisson equation and the heat equation. The general method of proof is more or less standard,
using the Sobolev inequality in the previous section and Moser's iteration.
However, comparing with the standard case, we need to deal with the Ricci
curvature term in a different manner. For this reason, a proof in the Poisson equation case is presented
in detail in Appendix I while the case for the heat equation is omitted.

These results allow us to establish the Gaussian upper and lower bounds of the heat kernel. Finally, we use the heat kernel to construct a good cut-off function which will be needed in the proofs of the main results.

In the following context, we denote by
\[
||f||^*_{q, B(x,r)}=\left(\oint_{B(x,r)}|f|^q\right)^{1/q}.
\]
\begin{theorem}[Elliptic gradient estimate]
\lab{thmpoisgrad}
Assume that \eqref{riccond}, \eqref{condition V} and \eqref{noncollapsing-0} hold, i.e.,
\[
\Ric+\frac{1}{2}\mc{L}_{V} g\geq - \lam g; \quad
|V|(y)\leq \frac{K}{d(y,O)^\a},\ \a \in [0, 1);
\quad \vol (B(x, 1)) \ge \rho.
\]
Then there exists a positive constant  $r_0=r_0(n,\lam,K,\a,\rho)$ such that,  for any $x\in\M$, $0<r\leq r_0$ and smooth functions $u$ and $f$ satisfying the equation
\[
\Delta u=f,\ in\  B(x,r),
\]
we have
\[
\sup_{B(x,\frac{1}{2}r)}|\d u|^2\leq C(n,\lam,K,\a,\rho)r^{-2}\left[(||u||^*_{2,B(x,r)})^2+(||f||^*_{2q,B(x,r)})^2\right],
\]
for any $q>n/2$.  Moreover
\[
\sup_{B(x,\frac{1}{2}r)} u^2\leq C(n,\lam,K,\a,\rho) \left[(||u||^*_{2,B(x,r)})^2+(||f||^*_{q ,B(x,r)})^2\right].
\]
In particular if $\a=0$, then the conclusions hold without the non-collapsing condition.
\end{theorem}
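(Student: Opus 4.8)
The plan is to run Moser's iteration twice --- once for $u$, to obtain the $L^\infty$ bound and its lower-order companion, and once for $w:=|\nabla u|^2$, to obtain the gradient bound --- both based on the scale-invariant Sobolev inequality of Corollary \ref{sobolev} (equivalently \eqref{L2 Sobolev}); the only genuinely new ingredient is the way the Ricci term in Bochner's formula is absorbed. It is convenient first to replace $g$ by $r^{-2}g$ and $V$ by $r^2V$: under this change \eqref{riccond} holds with the constant $r^2\lambda\le\lambda$, \eqref{condition V} holds with $Kr^{1-\alpha}\le K$, and, using the volume doubling contained in Theorem \ref{thm volume element comparison}(b), \eqref{noncollapsing-0} holds on unit balls with a noncollapsing constant bounded below in terms of $n,\lambda,K,\alpha,\rho$; so it suffices to prove both estimates on $B(x,1)$, and the stated powers of $r$ reappear upon scaling back.

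For $u$ the argument is routine: multiplying $\Delta u=f$ by $|u|^{2(p-1)}u\phi^2$ with a cutoff $\phi$, integrating by parts, feeding the resulting Caccioppoli inequality for $|u|^p$ into \eqref{L2 Sobolev}, and iterating over shrinking balls yields
\[
\sup_{B(x,1/2)}u^2\le C(n,\lambda,K,\alpha,\rho)\big[(||u||^*_{2,B(x,1)})^2+(||f||^*_{q,B(x,1)})^2\big],\qquad q>n/2,
\]
the curvature entering only through the Sobolev constant and the hypothesis $q>n/2$ being exactly what makes the inhomogeneous term summable along the iteration. For the gradient bound I would start from the Bochner identity
\[
\Delta w=2|\nabla^2u|^2+2\langle\nabla u,\nabla f\rangle+2\Ric(\nabla u,\nabla u),
\]
a pointwise identity since $u$, hence $w$, is smooth, and insert \eqref{riccond} in the form $\Ric(\nabla u,\nabla u)\ge-\lambda w-\langle\nabla_{\nabla u}V,\nabla u\rangle$. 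Testing against $w^{p-1}\phi^2$ and integrating, I would integrate by parts in both $\int\langle\nabla_{\nabla u}V,\nabla u\rangle w^{p-1}\phi^2$ and $\int\langle\nabla u,\nabla f\rangle w^{p-1}\phi^2$ so as to move the derivative off $V$ and off $f$; every term so produced that still carries a Hessian appears either as $|V|\,|\nabla^2u|\,|\nabla u|\,w^{p-1}\phi^2$ or, via $|\nabla w|\le 2|\nabla^2u|\,|\nabla u|$, as $|f|\,|\nabla^2u|\,|\nabla u|\,w^{p-1}\phi^2$, and Young's inequality absorbs it into the good term $2\int|\nabla^2u|^2w^{p-1}\phi^2$. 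What survives is a weak inequality saying that $w$ is a subsolution of a Schr\"odinger-type operator $\Delta-(c\lambda+c|V|^2)$ with inhomogeneity controlled by $f^2+|\nabla\phi|^2$-type quantities.

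To run the iteration on this I would use Lemma \ref{lemma Lq bound of V}: fixing $q_0\in(n,n/\alpha)$ gives $||V||^*_{q_0,B(x,1)}\le C(n,K,\lambda,\alpha,\rho)K$, hence $|V|^2\in L^{q_0/2}$ with $q_0/2>n/2$ --- precisely the threshold under which $c|V|^2$ is a lower-order perturbation for a Moser iteration based on \eqref{L2 Sobolev}. Controlling the potential term by H\"older at each step and iterating from $B(x,3/4)$ down to $B(x,1/2)$ gives $\sup_{B(x,1/2)}|\nabla u|^2\le C[\oint_{B(x,3/4)}|\nabla u|^2+(||f||^*_{2q,B(x,3/4)})^2]$ for any $q>n/2$; $f$ enters here through $||f||^*_{2q}$ rather than $||f||^*_q$ because in the Bochner step it occurs squared (as $(\Delta u)^2$ and through $f\,\nabla w$). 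Finally a Caccioppoli inequality for $u$ (multiply $\Delta u=f$ by $u\phi^2$) bounds $\oint_{B(x,3/4)}|\nabla u|^2$ by $C[(||u||^*_{2,B(x,1)})^2+(||f||^*_{2,B(x,1)})^2]$; combining this with the previous display and undoing the rescaling yields the theorem. When $\alpha=0$ one takes $q_0=\infty$ in Lemma \ref{lemma Lq bound of V} with no noncollapsing hypothesis (Corollary \ref{coro bounded V}), so the whole argument goes through with \eqref{noncollapsing-0} dropped.

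The hard part is the Bochner step: $\Ric(\nabla u,\nabla u)$ is not bounded below by a constant but only by $-\lambda w-\langle\nabla_{\nabla u}V,\nabla u\rangle$, and the last term carries a derivative of $V$ that is not controlled pointwise. Making the integration-by-parts absorption work --- so that the only ``potential'' surviving on the right-hand side is $|V|^2$, which lies in $L^{q_0/2}$ with $q_0/2>n/2$ precisely because $\alpha<1$ --- is where the hypotheses $\alpha\in[0,1)$ and the $L^q$ estimate of Lemma \ref{lemma Lq bound of V} are really used, and it is the technical heart of the section.
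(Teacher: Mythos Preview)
Your proposal is correct and follows essentially the same route as the paper's proof in Appendix~I: Bochner's formula, insertion of the Ricci--Lie-derivative lower bound, integration by parts to move the derivative off $V$ (and off $f$), absorption of the resulting Hessian terms by Young's inequality, and a Moser iteration in which the surviving potential $|V|^2$ is controlled via Lemma~\ref{lemma Lq bound of V} in $L^{q_0/2}$ with $q_0/2>n/2$. The only cosmetic differences are that the paper does not rescale to unit balls but tracks powers of $r$ throughout, works with $v=|\nabla u|^2+||f^2||^*_{q,B(x,r)}$ rather than $|\nabla u|^2$, and absorbs the bad terms into $|\nabla(\eta v^p)|^2$ rather than directly into the Bochner term $\int|\nabla^2u|^2w^{p-1}\phi^2$; these choices are interchangeable and the technical heart --- your final paragraph --- is identical.
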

\begin{proof}
See Appendix I.
\end{proof}

By a similar argument, we get
\begin{corollary}
Suppose that
\[
Ric+\Hess L\geq -\lam g,
\]
with $L$ satisfying \eqref{condition L},i.e.,
\[
|L(y)-L(z)|\leq K_1d(y, z)^\a,\  \textrm{and}\ \sup_{x\in\M, 0\leq r\leq 1}\left(r^{\beta}||\d L||_{q,\, B(x,r)}^*\right)\leq K_2.
\]
Then there exists a constant $r_0=r_0(n,\lam,K_1,K_2, \a,\beta)$, such that for any $x\in\M$, $0<r\leq r_0$ and smooth functions $u$ and $f$ satisfying the equation
\[
\Delta u=f
\]
in $B(x,r)$, we have
\[
\sup_{B(x,\frac{1}{2}r)}|\d u|^2\leq C(n,\lam,K_1,K_2,\a,\beta)r^{-2}\left[(||u||^*_{2,B(x,r)})^2+(||f||^*_{2q,B(x,r)})^2\right],
\]
for any $q>n/2$. Moreover,
\[
\sup_{B(x,\frac{1}{2}r)} u^2\leq C(n,\lam,K_1,K_2,\a,\beta) \left[(||u||^*_{2,B(x,r)})^2+(||f||^*_{q ,B(x,r)})^2\right].
\]
\end{corollary}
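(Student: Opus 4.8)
The plan is to mirror, almost verbatim, the proof of Theorem \ref{thmpoisgrad} carried out in Appendix I, specialized to the gradient case $V=\nabla L$, replacing the three structural inputs that argument relies on by the versions available under \eqref{gradient riccond} and \eqref{condition L}. A Moser iteration for $\Delta u=f$ on balls $B(x,r)$ with $r\le r_0$ needs: (i) a scale-invariant $L^1$, hence $L^2$, Sobolev inequality on these balls; (ii) volume doubling on the same scales; and (iii) an $L^q$ bound, for some $q>n/2$, on the quantity produced by the Ricci term in the Bochner formula. Input (i) is precisely Corollary \ref{isoperimetric constant L}, which holds under \eqref{gradient riccond} and \eqref{condition L} \emph{without} any non-collapsing hypothesis; input (ii) is Theorem \ref{thm.BE.vol}(b); and input (iii) is the second half of \eqref{condition L}, namely $r^{\beta}\,\|\nabla L\|^{*}_{q,B(x,r)}\le K_{2}$ with the exponent $q$ there taken $>n/2$, matching the threshold already appearing in Theorem \ref{thmpoisgrad}.

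The only step that is genuinely different from the textbook Moser iteration is how the curvature enters. Applying the Bochner formula to $|\nabla u|^{2}$ and using \eqref{gradient riccond}, the weak formulation against a nonnegative test function $\varphi$ contains a term $\int \Hess L(\nabla u,\nabla u)\,\varphi$. This is exactly where staying on the original manifold pays off: instead of folding it into a weighted Laplacian, one integrates by parts once to move a derivative off the Hessian, rewriting that term as a combination of $\int \langle\nabla L,\nabla u\rangle\,\Delta u\,\varphi$ (with $\Delta u=f$), $\int \langle \nabla L,\nabla u\rangle\,\langle\nabla u,\nabla\varphi\rangle$, and, after a further integration by parts where the second derivatives of $u$ reappear, lower-order pieces; boundary contributions are controlled because the first half of \eqref{condition L} forces $L$ to be bounded on unit balls. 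After Cauchy--Schwarz and Young's inequality the only potentially dangerous factor left is $|\nabla L|$, paired with $L^{q'}$-type integrals of $\nabla u$ and $f$; the bound $\|\nabla L\|^{*}_{q,B(x,r)}\lesssim K_{2}r^{-\beta}$ with $q>n/2$, together with the Sobolev inequality of input (i), lets one absorb these into the iteration exactly as in Appendix I.

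Carrying out the iteration then yields the asserted bounds on $\sup_{B(x,r/2)}|\nabla u|^{2}$ and on $\sup_{B(x,r/2)}u^{2}$, with threshold radius $r_{0}$ and constants depending on $n,\lambda,K_{1},K_{2},\alpha,\beta$ (and on $q$) through the Sobolev and doubling constants and through the $L^{q}$ bound on $\nabla L$; no $\rho$ enters since no non-collapsing is assumed. I expect the main technical obstacle to be the bookkeeping in step (iii): one must verify that the integration-by-parts manipulation of the $\Hess L$ term never demands better than the assumed $L^{q}$, $q>n/2$, integrability of $\nabla L$, that the weight $r^{-\beta}$ with $0\le\beta<1$ is harmless and only affects the value of $r_{0}$, and that the powers of $r$ track correctly so the final estimate is genuinely scale invariant. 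Everything else is a transcription of the Appendix I argument with the $\rho$-dependence suppressed.
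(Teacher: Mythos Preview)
Your proposal is correct and follows exactly the route the paper indicates: the paper's entire proof of this corollary is the phrase ``By a similar argument, we get,'' and you have correctly identified that Corollary~\ref{isoperimetric constant L}, Theorem~\ref{thm.BE.vol}(b), and the second half of \eqref{condition L} supply, respectively, the Sobolev inequality, the volume doubling, and the curvature-term control that the Appendix~I iteration needs, now without any non-collapsing hypothesis. One small caveat on your step (iii): if one traces Appendix~I literally, the Cauchy--Schwarz step after integrating the Lie-derivative term by parts leaves $\int \eta^{2} v^{2p}|\nabla L|^{2}$ rather than a first power of $|\nabla L|$, so the subsequent H\"older step actually pairs against $\|\nabla L\|^{*}_{2q}$ with $q>n/2$; in effect the exponent in \eqref{condition L} must exceed $n$, a point on which the paper itself is silent and which your remark about ``the main technical obstacle'' rightly anticipates.
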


As mentioned at the beginning of the section, the same argument also implies the corresponding result for the heat
equation:
\begin{theorem}[Parabolic gradient estimate]
\lab{thmheatgrad}
Suppose the same conditions as in Theorem \ref{thmpoisgrad} hold.
Then there exists a positive constant $r_0=r_0(n,\lam,K,\a,\rho)$ such that,  for any $x\in\M$, $0<r\leq r_0$ and smooth functions $u$ and $f$ satisfying the heat equation
\[
\Delta u(x, t) - \partial_t u(x, t)=f(x)
\]
in $Q(x, t, r)=B(x,r) \times [t-r^2, t]$, we have
\[
\sup_{Q(x, t, \frac{1}{2}r)}|\d u|^2\leq C(n,\lam,K,\a,\rho)r^{-2}\left[(||u||^*_{2, Q(x, t, r)})^2+
(||f||^*_{2q, B(x, r)})^2\right],
\]
for any $q>\frac{n}{2}$.  Moreover,
\[
\sup_{Q(x, t, \frac{1}{2}r)} u^2\leq C(n,\lam,K,\a,\rho) \left[(||u||^*_{2, Q(x, t, r)})^2+
(||f||^*_{q, B(x, r)})^2\right].
\]
\end{theorem}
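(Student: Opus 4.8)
The plan is to adapt the argument of Appendix~I (the elliptic gradient estimate of Theorem~\ref{thmpoisgrad}) to the parabolic setting; the only genuinely new ingredients are a space--time cut-off and an extra integration in time, while the mechanism for handling the modified-Ricci term is exactly the one already used in Proposition~\ref{prop laplacian comparison} and in the volume comparison theorems. I would first dispose of the sup bound for $u^2$ itself, which is the easier half because the equation $\Delta u-\partial_t u=f$ carries no curvature term. One multiplies by $\phi^2|u|^{2p-2}u$ for a cut-off $\phi$ supported in $Q(x,t,r)$, integrates over $Q$, absorbs $\int\!\!\int\phi^2|u|^{2p-1}|f|$ by Young's and Hölder's inequalities, and runs the standard parabolic Moser iteration. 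The inputs are the local Sobolev inequality of Corollary~\ref{sobolev} and the volume doubling of Theorem~\ref{thm volume element comparison}(b), which together upgrade to the parabolic Sobolev inequality by the usual interpolation between $L^\infty_t L^2_x$ and $L^2_t L^{2n/(n-2)}_x$; since $f=f(x)$ is time independent its space--time average equals its spatial average, so the iteration closes with the norm $\|f\|^*_{q,B(x,r)}$ for $q>n/2$, giving the second displayed inequality.

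For the gradient bound I set $v=|\nabla u|^2$ and start from the Bochner formula along the heat flow,
\[
(\Delta-\partial_t)v=2|\nabla^2u|^2+2\langle\nabla u,\nabla f\rangle+2\Ric(\nabla u,\nabla u),
\]
and use \eqref{riccond} to replace the curvature term,
\[
\Ric(\nabla u,\nabla u)\ge-\lam v-\langle\nabla_{\nabla u}V,\nabla u\rangle,
\]
which is the same simplification exploited in Proposition~\ref{prop laplacian comparison}. I then test the resulting differential inequality against $\phi^2 v^{p-1}$ and integrate over $Q$. The term $\langle\nabla u,\nabla f\rangle$ is integrated by parts in space to produce $f\,\Delta u$ and lower-order pieces, contributing the norm $\|f\|^*_{2q,B}$ after Young's inequality.

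The heart of the matter is the curvature term $\int\!\!\int\phi^2 v^{p-1}\langle\nabla_{\nabla u}V,\nabla u\rangle$. Because $V$ is only known to lie in $L^q$ for some $q>n$ (Lemma~\ref{lemma Lq bound of V}), with no control on $\nabla V$, it cannot be estimated pointwise, and one must integrate by parts to move the derivative off $V$ --- the same ``$\mathcal{L}_Vg$ becomes an ordinary derivative'' observation that underlies the comparison results. This produces terms schematically of the form $\int\!\!\int|V|\,\phi^2 v^{p-1}|\nabla^2u|$, $\int\!\!\int|V|\,\phi^2 v^{p-1}|\nabla v|$ (and, after $|\nabla v|\le2v^{1/2}|\nabla^2u|$, a further Hessian piece), and $\int\!\!\int|V|\,\phi|\nabla\phi|\,v^p$. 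Each is handled by Young's inequality together with Hölder against the Sobolev exponent: the Hessian pieces are absorbed into the good term $2\phi^2 v^{p-1}|\nabla^2u|^2$ (using Kato's inequality $|\nabla^2u|^2\ge|\nabla|\nabla u||^2$ to match the Dirichlet energy of $v^{p/2}$ generated by the iteration), while the surviving factor $\|V\|_{L^q}$ couples with the $r^2$ gain in the Sobolev inequality to give a coefficient that is small once $r\le r_0(n,\lam,K,\a,\rho)$ --- this is exactly where \eqref{condition V} with $\a<1$, via the bound $\|V\|^*_{q,B}\le CKr^{-\a}$ of Lemma~\ref{lemma Lq bound of V}, is used, in the spirit of the $L^p$-Ricci estimates of Petersen--Wei \cite{PeWe1} and Dai--Wei--Zhang \cite{DWZ}.

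With the per-step reverse-Hölder inequalities then having constants depending only on $n,\lam,K,\a,\rho$, uniformly in the base point and in $u$, iterating $p\to\infty$ yields $\sup_{Q(x,t,r/2)}|\nabla u|^2\le Cr^{-2}[(\|u\|^*_{2,Q})^2+(\|f\|^*_{2q,B})^2]$; when $\a=0$ the field $V$ is bounded, the doubling and Sobolev inequalities hold with no non-collapsing hypothesis (Corollary~\ref{coro bounded V} and Theorem~\ref{isoperimetric constant}), and the same iteration applies verbatim. I expect the main obstacle to be purely technical: carefully carrying out the integration by parts of the curvature term and the Young-inequality splittings so that no uncontrolled quantity --- in particular no $\nabla V$, and no $u$- or base-point-dependent constant --- survives into the iteration; the Moser scheme itself is routine given the Sobolev inequality of the previous section.
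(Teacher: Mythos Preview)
Your proposal is correct and follows essentially the same approach as the paper: the authors explicitly omit the proof, stating that ``the same argument'' as the elliptic case in Appendix~I ``also implies the corresponding result for the heat equation,'' with the Ricci term handled by the identical integration-by-parts device you describe. Your outline of the parabolic Moser iteration with a space--time cut-off, the Bochner identity, and the treatment of the $V$-term via Lemma~\ref{lemma Lq bound of V} matches the intended argument precisely.
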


Since the Sobolev inequality,
 volume comparison, volume non-collapsing,  and gradient bound holds, it is well known that the
following heat kernel bounds hold.  The proof is omitted.

\begin{theorem}[Heat kernel bounds]
\lab{thmheatkernel}
Under the same conditions as in Theorem \ref{thmpoisgrad},
for $G=G(x, t; y, 0)$, the heat kernel on the manifold $\M$,
the following bounds hold for some positive constants $C_i=C_i(n,\lam,K,\a,\rho)$, $i=1,\cdots,4$.
\[
C_1 t^{- \frac{n}{2}}e^{\frac{-C_2d^2(x,y)}{t}}\leq G(x, t;y, 0)\leq
C_3t^{-\frac{n}{2}}e^{-\frac{d^2(x,y)}{C_4t}},\ \forall x,y\in \M,\ \textrm{and}\  0<t\leq 1.
\]
\[
|\nabla_x G(x, t;y, 0)|\leq
C_3t^{-\frac{n+1}{2}}e^{-\frac{d^2(x,y)}{C_4t}},\ \forall x,y\in \M,\ \textrm{and}\  0<t\leq 1.
\]
\[
|\partial_t G(x, t;y, 0)|\leq
C_3t^{-\frac{n+2}{2}}e^{-\frac{d^2(x,y)}{C_4t}},\ \forall x,y\in \M,\ \textrm{and}\  0<t\leq 1.
\]
\end{theorem}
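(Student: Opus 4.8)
The plan is to feed the ingredients assembled in the previous sections---the scale-invariant local Sobolev inequality of Corollary \ref{sobolev}, the volume comparison of Theorem \ref{thm volume element comparison} (which yields volume doubling at scales $\le r_0$ and, via \eqref{noncollapsing-0}, the two-sided bound $c\,r^n\le\vol(B(x,r))\le C\,r^n$ for $r\le 1$), and the parabolic mean value and gradient estimates of Theorem \ref{thmheatgrad}---into the standard heat-kernel machinery of Nash, Moser, Davies and Li--Yau. One works with $0<t\le 1$; since $\M$ is complete and \eqref{volume noninflation} gives $\vol(B(x,r))\le e^{Cr^2}r^n$, Grigor'yan's volume test shows $\M$ is stochastically complete, i.e. $\int_\M G(x,t;y,0)\,dy=1$. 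The first step is the on-diagonal upper bound: rewriting Corollary \ref{sobolev} and using $\vol(B(x,r))\approx r^n$ turns it into the Euclidean-type Sobolev (hence Faber--Krahn) inequality for domains sitting inside balls of radius $\le r_0$, and the Nash/Moser iteration then gives $G(x,t;x,0)\le C(n,\lam,K,\a,\rho)\,t^{-n/2}$ for $t\le r_0^2$, extended to all $t\le 1$ because $t\mapsto G(x,t;x,0)$ is nonincreasing and $r_0$ is a fixed constant; by $G(x,t;y,0)\le G(x,t;x,0)^{1/2}G(y,t;y,0)^{1/2}$ this gives $G(x,t;y,0)\le C\,t^{-n/2}$. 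Davies' exponential-perturbation method (the integrated maximum principle for $\int_\M G^2(x,t;z,0)e^{2\xi(z)}\,dz$ with $|\nabla\xi|$ bounded), which is purely functional-analytic, then upgrades this to $G(x,t;y,0)\le C_3\,t^{-n/2}e^{-d^2(x,y)/(C_4 t)}$.

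For the lower bounds, I would first fix a small scale $\tau_0=\tau_0(n,\lam,K,\a,\rho)\le r_0^2$ so small that the genuine Gaussian decay $e^{-d^2/(C_4 t)}$ outweighs the a priori volume growth $e^{\lam r^2}$ of \eqref{volume noninflation}; then stochastic completeness and the Gaussian upper bound give $\int_{B(x,A\sqrt t)}G(x,t;z,0)\,dz\ge\tfrac12$ for a large fixed $A$ and all $t\le\tau_0$, whence $G(x,2t;x,0)=\int_\M G(x,t;z,0)^2\,dz\ge\vol(B(x,A\sqrt t))^{-1}\big(\int_{B(x,A\sqrt t)}G\big)^2\ge c\,t^{-n/2}$. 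Next, applying the parabolic gradient estimate of Theorem \ref{thmheatgrad} to the heat solution $z\mapsto G(x,t;z,0)$ (which has $f\equiv 0$), together with the on-diagonal upper bound, yields $\sup_z|\nabla_z G(x,t;z,0)|\le C\,t^{-(n+1)/2}$ on $B(x,\tfrac12\sqrt t)$, hence the near-diagonal lower bound $G(x,t;y,0)\ge\tfrac c2 t^{-n/2}$ whenever $d(x,y)\le\eps\sqrt t$, first for $t\le\tau_0$ and then, after composing the semigroup a bounded number of times, for all $t\le 1$. Finally a chaining argument---writing $G(x,t;y,0)$ by the semigroup as an $(N-1)$-fold integral along a minimal geodesic from $x$ to $y$ cut into $N\sim 1+d^2(x,y)/t$ arcs, restricting each integration to a ball of radius $\sim\sqrt{t/N}$ about a subdivision point, using the near-diagonal lower bound on every factor and $\vol(B(\cdot,\sqrt{t/N}))\approx(t/N)^{n/2}$---collapses to $G(x,t;y,0)\ge C_1 t^{-n/2}e^{-C_2 d^2(x,y)/t}$, with the polynomial factor $N^{n/2}$ only helping.

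The derivative bounds follow by applying Theorem \ref{thmheatgrad} to functions that again solve the heat equation. For the gradient, $z\mapsto G(z,t;y,0)$ solves $\partial_t u=\Delta u$ with $f\equiv 0$, so Theorem \ref{thmheatgrad} on $Q(x,t,r)$ with $r\sim\min\{\sqrt t,\tfrac12 d(x,y)\}$, the Gaussian upper bound, and $d(z,y)\ge d(x,y)-r\ge\tfrac12 d(x,y)$ on that cylinder give $|\nabla_x G(x,t;y,0)|\le C_3 t^{-(n+1)/2}e^{-d^2(x,y)/(C_4 t)}$. For the time derivative, $v=\partial_t G=\Delta_z G$ also solves the heat equation; bounding the local $L^2$ norm of $v$ via a Caccioppoli-type energy estimate together with the gradient bound just proved, and then applying the mean value inequality of Theorem \ref{thmheatgrad} to $v$, gives $|\partial_t G(x,t;y,0)|\le C_3 t^{-(n+2)/2}e^{-d^2(x,y)/(C_4 t)}$; equivalently one may differentiate the semigroup identity to obtain $\partial_t G(x,2s;y,0)=-\int_\M\nabla_z G(x,s;z,0)\cdot\nabla_z G(z,s;y,0)\,dz$ and insert the gradient bounds.

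The hard part is the off-diagonal lower bound. We have no scale-invariant parabolic Harnack inequality at our disposal---Corollary \ref{sobolev} is a Dirichlet-type Sobolev inequality and does not by itself give an $L^2$ Poincar\'e inequality, which is what the Grigor'yan--Saloff-Coste characterization would require---so the near-diagonal lower bound has to be pulled out of the parabolic \emph{gradient} estimate of Theorem \ref{thmheatgrad}, which here plays the role a genuine Ricci lower bound plays in the Li--Yau approach, and then propagated by the chaining above; it is precisely the uniformity of that gradient estimate over all scales $\le r_0$, already established, that makes the chaining close. The only other point requiring care is that the far-field tail estimates cannot use the Bishop-type bound $e^{Cr}$ available under a genuine Ricci lower bound but only the weaker $e^{\lam r^2}$ of \eqref{volume noninflation}; this is handled by proving everything first at scales $\le\tau_0(n,\lam,K,\a,\rho)$ and then bootstrapping to scale $1$ by a bounded number of applications of the semigroup property, which keeps all constants of the asserted form.
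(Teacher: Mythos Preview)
Your outline is correct and is precisely the ``well known'' argument the paper has in mind: the authors omit the proof entirely, stating only that the heat kernel bounds follow from the Sobolev inequality (Corollary~\ref{sobolev}), the volume comparison and non-collapsing, and the gradient bounds already established. Your reconstruction via Nash--Moser/Davies for the Gaussian upper bound, the on-diagonal lower bound plus chaining for the Gaussian lower bound, and Theorem~\ref{thmheatgrad} applied to $G$ and $\partial_t G$ for the derivative estimates is exactly the standard route, and your remark that the near-diagonal lower bound must come from the parabolic gradient estimate (rather than a Harnack inequality built on Poincar\'e) is apt---indeed the paper later \emph{derives} the Poincar\'e inequality from the heat kernel bounds, not the other way around.
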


Similar to Corollary 3.2 in \cite{PeWe2}, we may also use the Sobolev inequality in Corollary \ref{sobolev} to prove the following maximum principle. Since the proof is identical, it is skipped.

\begin{theorem}[Maximum principle]\label{thm maximum principle}
Under the same conditions as in Theorem \ref{thmpoisgrad}, there is an $r_0=r_0(n,\lam,K,\a,\rho)$ so that for any $r\leq r_0$ and function $u$ satisfying
\[
\Delta u\geq f,\ in\ B(x,r),
\]
we have
\[
\sup_{B(x,r)}u\leq \sup_{\partial B(x,r)} u+C(n)r^{2}||f||^*_{q,B(x,r)},
\]
where $q>\frac{n}{2}$.
\end{theorem}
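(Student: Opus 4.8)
The plan is to follow the standard Moser iteration / parabolic-type argument for the maximum principle, but packaged so that the only geometric input needed is the scale-invariant Sobolev inequality from Corollary \ref{sobolev} together with the volume doubling property coming from Theorem \ref{thm volume element comparison}(b). Since the statement asserts the proof is ``identical'' to Corollary 3.2 in \cite{PeWe2}, I would organize the argument exactly in that mold. First, after rescaling the metric so that $r=1$ (which turns the Sobolev constant $C(n)r$ into $C(n)$ and the conclusion into the claimed scale-invariant form), I would reduce to the case where $u$ is subharmonic up to the forcing term $f$, i.e. $\Delta u \ge f$ in $B(x,1)$, and where $\sup_{\partial B(x,1)} u = 0$ by subtracting a constant; then one must bound $\sup_{B} u$ by $C(n)\|f\|^*_{q,B}$ with $q>n/2$.

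The key steps, in order, would be: (1) Let $v = (u - k)_+$ for a level $k \ge 0$; multiply $\Delta u \ge f$ by a suitable power $v^{p-1}$ times a cutoff and integrate by parts to get an energy inequality $\int |\nabla v^{p/2}|^2 \le C p^2 \big( \int v^{p} + \int |f| v^{p-1}\big)$ on the super-level set $\{u>k\}$. (2) Feed this energy estimate into the $L^1$-Sobolev inequality \eqref{L1 Sobolev} (or equivalently the $L^2$ one \eqref{L2 Sobolev}) to obtain a reverse-H\"older / gain-of-integrability inequality relating the $L^{p\chi}$ norm of $v$ on $\{u>k\}$ to its $L^p$ norm, with $\chi = n/(n-1)$ (or $n/(n-2)$). (3) Run the De Giorgi level-set iteration (rather than pure Moser iteration, which is cleaner when $f \in L^q$ only): set up a decreasing sequence of levels $k_j \uparrow k_\infty$ and radii or just levels, and show that the quantity $a_j = \int_{\{u>k_j\}} (u-k_j)$ (or a volume of a super-level set) satisfies a recursion $a_{j+1} \le C b^j a_j^{1+\delta}$ with $\delta>0$ forced by the Sobolev gain and by $q>n/2$; the fast-geometric-decay lemma then gives $a_\infty = 0$, i.e. $u \le k_\infty \le C(n)\|f\|^*_{q,B}$. (4) Undo the scaling to recover the stated inequality with the factor $r^2$.

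The main obstacle — the one point where this differs from the classical Euclidean or $\mathrm{Ric}\ge 0$ setting — is handling the Ricci curvature term that would ordinarily appear via Bochner when one estimates $|\nabla u|$; here, however, since we only need a sup bound on $u$ itself (not on $\nabla u$) and $u$ is a genuine subsolution of $\Delta u = f$ with the \emph{standard} Laplacian, no curvature enters the integration by parts at all: the only place geometry is used is in invoking the Sobolev inequality and volume doubling, both of which have already been established under \eqref{riccond}, \eqref{condition V}, \eqref{noncollapsing-0} (for $r \le r_0$). So the ``hard part'' is really just bookkeeping: making sure the threshold $q > n/2$ is exactly what is needed for the iteration exponent $\delta = \frac{2}{n}\cdot\frac{2q-n}{2q}>0$ to be positive, and that all constants depend only on $n$ (the Sobolev constant is $C(n)$), so that $r_0 = r_0(n,\lam,K,\a,\rho)$ enters only through the requirement $r \le r_0$ guaranteeing validity of Corollary \ref{sobolev}. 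One should also be slightly careful that $\partial\Omega \cap \partial B(x,r) = \emptyset$ condition in the isoperimetric constant is met by the super-level sets $\{u>k\}$ for $k$ above $\sup_{\partial B} u$, which holds since $u$ is continuous and $u \le \sup_{\partial B} u$ near the boundary by the choice of $k$.

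I would conclude by remarking that, because every ingredient (energy inequality via plain integration by parts, Sobolev, doubling, fast-decay lemma) is insensitive to whether one is in the $V$-bounded case or the $\alpha>0$ case, the same statement holds verbatim under the Bakry--\'Emery hypotheses \eqref{gradient riccond}, \eqref{condition L}, with $r_0$ depending on $(n,\lam,K_1,K_2,\a,\beta)$ instead, exactly paralleling the pattern of the preceding corollaries in Sections 2 and 3.
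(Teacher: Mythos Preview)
Your proposal is correct and matches the paper's approach: the paper skips the proof entirely, stating only that it is identical to Corollary~3.2 in \cite{PeWe2} using the Sobolev inequality from Corollary~\ref{sobolev}, and your outline (energy inequality from plain integration by parts, Sobolev gain, De~Giorgi/Moser iteration with $q>n/2$ ensuring positive gain exponent) is precisely that argument. Your observation that no curvature term enters because one bounds $u$ rather than $|\nabla u|$ is exactly why the proof carries over verbatim from the integral-Ricci setting.
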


To prove the volume convergence and cone rigidity for Gromov-Hausdorff limits, another important tool is to construct an appropriate cut-off function (see e.g., Theorem 6.33 in \cite{ChCo1}). Since we have obtained Sobolev inequality \eqref{L2 Sobolev} and gradient estimate (Theorem \ref{thmpoisgrad}), we may wish to follow the proof of Theorem 6.33 in \cite{ChCo1} to show the existence of the cut-off function in the following lemma. However, the proof may be quite technical.  Instead, we adapt a method in \cite{BZ} and an covering argument in \cite{CN}  (see also \cite{Bam}) to construct the cut-off function through the heat kernel estimates in Theorem \ref{thmheatkernel}. This method may have a broader application.

\begin{lemma}[Cut-off function]\label{lem cutoff}
Suppose again that  the basic assumption \eqref{riccond}, \eqref{condition V} and the volume non-collapsing condition \eqref{noncollapsing-0} hold. Then for any $x_0\in \M$ and $0<r_0\leq 5$, there exists a function $\phi\in C^2(\M)$ satisfying\\
(1) $0\leq \phi\leq1$ everywhere.\\
(2) $supp\ \phi \subset\subset B(x_0,1.9r_0)$.\\
(3) $\phi=1$ in $B(x_0,1.1r_0)$.\\
(4) $|\d \phi|<C(n,\lam,K,\a,\rho)r_0^{-1}$ and $|\Delta \phi|<C(n,\lam,K,\a,\rho)r_0^{-2}$.
\end{lemma}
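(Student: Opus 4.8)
The plan is to construct $\phi$ by composing a fixed cut-off profile with a suitable \emph{regularized distance} function built from the heat kernel, rather than the distance function itself (which has no good Laplacian control under our hypotheses). Concretely, fix $x_0$ and $r_0$, and consider the heat kernel $G(x,t;x_0,0)$ at a well-chosen time $t\sim r_0^2$; set $v(x) = -t\log G(x,t;x_0,0)$ (or a smooth truncation thereof), which by Theorem \ref{thmheatkernel} satisfies $c_1 d^2(x,x_0) - c_2 t \le v(x)\le c_3 d^2(x,x_0)+c_4 t$ and, via the gradient and time-derivative bounds on $G$, enjoys $|\nabla v|\le C$ and $|\Delta v|\le C t^{-1}$ on an annular region around $x_0$. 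Then $\phi := \chi(v/r_0^2)$ for a fixed smooth $\chi:\mathbb{R}\to[0,1]$ that is $1$ on $(-\infty, a]$ and $0$ on $[b,\infty)$, with $a<b$ chosen so that $\{v\le a r_0^2\}\supset B(x_0,1.1r_0)$ and $\{v\le b r_0^2\}\subset B(x_0,1.9r_0)$; the chain rule gives $|\nabla\phi|\le C r_0^{-2}|\nabla v|\le C r_0^{-1}$ and $|\Delta\phi|\le C r_0^{-2}$ from $|\Delta\phi| = |\chi'(v/r_0^2) \Delta v / r_0^2 + \chi''(v/r_0^2)|\nabla v|^2/r_0^4|$ together with the bounds on $\Delta v$ and $|\nabla v|$.

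The second, cleaner route --- and the one I would actually carry out, following \cite{BZ} and the covering argument of \cite{CN} --- is to bypass $-t\log G$ and instead produce the cut-off directly as an average of solutions of the heat equation. First I would take $\phi_0$ to be a rough Lipschitz cut-off equal to $1$ on $B(x_0,1.3r_0)$ and supported in $B(x_0,1.7r_0)$, then run the heat flow for time $\tau\sim r_0^2$ to get $\phi_1 = e^{\tau\Delta}\phi_0$, which is smooth with $0\le\phi_1\le 1$. The heat kernel estimates of Theorem \ref{thmheatkernel} (Gaussian upper bound) show $\phi_1$ is exponentially small outside $B(x_0,1.9r_0)$, and the Gaussian lower bound shows $\phi_1$ is close to $1$ on $B(x_0,1.1r_0)$; the gradient bound $|\nabla_x G|\le C\tau^{-(n+1)/2}e^{-d^2/C\tau}$ integrates (against the fact that $\int|\nabla_x G(x,\tau;y,0)|\,dy \le C\tau^{-1/2}$, which follows from integrating the Gaussian) to give $|\nabla\phi_1|\le C r_0^{-1}$, and $\Delta\phi_1 = \partial_\tau\phi_1$ is controlled by $\int|\partial_t G|\,dy\le C\tau^{-1}\le Cr_0^{-2}$. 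A standard cut-and-paste with a fixed profile $\chi$ applied to $\phi_1$ (to make it \emph{exactly} $1$ near $x_0$ and \emph{exactly} $0$ away) then yields $\phi\in C^2$ with properties (1)--(4); here the gradient estimate Theorem \ref{thmpoisgrad} is what upgrades the $C^0$ control on $\phi_1-1$ to the $C^1$ smallness needed before composing with $\chi$.

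The covering argument of \cite{CN} enters because a single heat-flow average is not enough to make the intermediate function both close to $1$ on the inner ball \emph{and} have the \emph{pointwise} Laplacian bound after the $\chi$-composition --- one typically needs to iterate on a sequence of scales, or average $\phi_1$ again against a mollifier in the time variable, or solve an auxiliary Poisson equation $\Delta\psi = \Delta\phi_1$ on $B(x_0,1.8r_0)$ with zero boundary data (controlled by Theorem \ref{thmpoisgrad} and the Sobolev inequality of Corollary \ref{sobolev}) and set $\phi = \chi(\phi_1 - \psi)$. I would use the Poisson-correction version: $\phi_1-\psi$ is then harmonic-like enough that $\chi(\phi_1-\psi)$ has $|\Delta\phi|$ bounded by $|\chi''||\nabla(\phi_1-\psi)|^2 + |\chi'||\Delta(\phi_1-\psi)|$, both terms of size $r_0^{-2}$.

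The main obstacle will be keeping the constants scale-invariant and dimension-only where the statement demands $C(n,\lam,K,\a,\rho)$ --- i.e., verifying that all the heat kernel constants $C_i$, the Sobolev constant, and the gradient-estimate constant survive the rescaling $g\mapsto r_0^{-2}g$ with at most the stated dependence, and that the restriction $r_0\le 5$ (through the requirement $\tau\lesssim 1$ in Theorem \ref{thmheatkernel}, which only holds for $0<t\le 1$) is exactly what makes this work. A secondary technical point is ensuring the level sets of the regularized function genuinely nest between $B(x_0,1.1r_0)$ and $B(x_0,1.9r_0)$, which requires choosing $\tau$ small enough (a fixed fraction of $r_0^2$) that the Gaussian tails in Theorem \ref{thmheatkernel} are concentrated well inside the required annulus.
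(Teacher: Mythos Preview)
Your Route 2 (heat-flow a Lipschitz cut-off $\phi_0$ for time $\tau\sim r_0^2$, then compose with a profile $\chi$) is a correct construction, but it is not what the paper does. The paper works with the heat kernel at a \emph{single point}: it sets $G(x)=G(x,Ar_0^2;x_0,0)$, truncates from below via $\hat\psi=\max\{G-c,0\}$ so that $\hat\psi$ is compactly supported in $B(x_0,r_0)$ (by the Gaussian upper bound), normalizes, and cubes to obtain a $C^2$ bump $\psi$. This bump satisfies the gradient and Laplacian bounds pointwise from Theorem~\ref{thmheatkernel}, but it is only guaranteed to exceed $\delta$ on a \emph{tiny} ball $B(x_0,\delta r_0)$, with $\delta=\delta(n,\lambda,K,\alpha,\rho)$ possibly very small. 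The covering argument of \cite{CN} is then invoked precisely to remedy this: one covers $B(x_0,1.1r_0)$ by $N$ balls $B(p_i,\epsilon\delta r_0)$, produces bumps $\psi_i$ supported in $B(p_i,\epsilon r_0)$, and sets $\phi=\eta\bigl(\sum_i\delta^{-1}\psi_i\bigr)$ for a profile $\eta$ that is $1$ on $[1,\infty)$.

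Your approach trades the covering for an integration against $\phi_0$: because $\phi_1=e^{\tau\Delta}\phi_0$ is already close to $1$ on the entire inner ball, a single profile composition $\chi(\phi_1)$ suffices and no covering is required. So your explanation of why the covering enters is off --- in the paper it compensates for the smallness of $\delta$, not for any deficiency in the Laplacian bound after composition --- and the Poisson-correction and $C^0$-to-$C^1$ upgrade you propose are unnecessary: the pointwise bounds $|\nabla\phi_1|\le C\tau^{-1/2}$ and $|\Delta\phi_1|=|\partial_\tau\phi_1|\le C\tau^{-1}$ (obtained by integrating the kernel bounds over the \emph{bounded} support of $\phi_0$) feed straight into the chain rule for $\chi(\phi_1)$. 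The paper's route avoids integrating the kernel bounds entirely, using only their pointwise form; yours needs volume comparison to control those integrals but skips the combinatorics. Both buy the same conclusion. Your Route 1 via $v=-t\log G$ is more fragile: because the ratio between $C_2$ and $1/C_4$ in the two-sided bound of Theorem~\ref{thmheatkernel} is not controlled, the level sets $\{v\le ar_0^2\}$ need not nest between $B(x_0,1.1r_0)$ and $B(x_0,1.9r_0)$ without further argument.
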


\begin{remark}
Note that in $(2)$ and $(3)$ above, $1.9$ and $1.1$ are of nothing special. In fact, we may choose any numbers.
\end{remark}

\begin{proof} The proof consists of two steps.

{\it Step 1:} We first show that there exists a constant $\delta=\delta(n,\lam,K,\a,\rho)>0$ such that for any point $x_0\in \M$ and constant $r_0>0$, there exists a function $\psi\in C^{\infty}(\M)$ satisfying:\\
(a) $0\leq \psi<1$ everywhere.\\
(b) $\psi>\delta$ in $B(x_0,\delta r_0)$.\\
(c) $\psi=0$ in $\M\setminus B(x_0,r_0)$.\\
(d) $|\d \psi|<r_0^{-1}$ and $|\Delta \psi|<r_0^{-2}$ everywhere.\\

Consider the function $G(x)=G(x,Ar_0^2;x_0,0)$, where $G(x,t;x_0,0)$ denotes the heat kernel, and $A$ is a constant to be determined. From Theorem \ref{thmheatkernel}, we have
\be\label{G(x0)}
C_1(Ar_0^2)^{-\frac{n}{2}}\leq G(x_0)\leq C_3(Ar_0^2)^{-\frac{n}{2}}.
\ee
If $d(x,x_0)\geq r_0$, then
\[
G(x)\leq C_3(Ar_0^2)^{-\frac{n}{2}}e^{-\frac{d^2(x,x_0)}{C_4Ar_0^2}}\leq C_3(Ar_0^2)^{-\frac{n}{2}}e^{-\frac{1}{AC_4}}.
\]
Thus by choosing $A=\frac{1}{C_4\ln(2C_3/C_1)}$, we have
\[
G(x)\leq\frac{1}{2}C_1(Ar_0^2)^{-\frac{n}{2}},\ in\  \M\setminus B(x_0,r_0).
\]

Also in $B(x_0,r_0)$,
\be\label{cutoff eq1}
G(x)\leq G(x_0)+|\d G|d(x,x_0)\leq C_3\left[(Ar_0^2)^{-\frac{n}{2}}+(Ar_0^2)^{-\frac{n+1}{2}}r_0\right]=(1+A^{-1/2})C_3(Ar_0^2)^{-\frac{n}{2}}.
\ee
Let
\[\hat{\psi}(x)=\max\{G(x)-0.6C_1(Ar_0^2)^{-\frac{n}{2}},0\}.\]
Then $supp\, \hat{\psi}\subset\subset B(x_0, r_0)$. Moreover, when $\hat{\psi}>0$, \eqref{cutoff eq1} implies that
\be\label{cutoff eq2}
\hat{\psi}(x)\leq [(1+A^{-1/2})C_3-0.6C_1](Ar_0^2)^{-\frac{n}{2}},
\ee
in $B(x_0,r_0)$, and Theorem \ref{thmheatkernel} gives
\be\label{cutoff eq3}
|\d \hat{\psi}|=|\d G(x)|\leq C_3(Ar_0^2)^{-\frac{n+1}{2}},\ and\ |\Delta \hat{\psi}|=|\Delta G|=|\partial_t G|\leq C_3(Ar_0^2)^{-\frac{n+2}{2}}.
\ee

On the other hand, from \eqref{G(x0)}, we have $\hat{\psi}(x_0)\geq 0.4C_1(Ar_0^2)^{-\frac{n}{2}}$. Thus, setting $\hat{\delta}=\frac{0.4C_1}{(1+A^{-1/2}+A^{-1})C_3}$ yields that in $B(x_0,\hat{\delta} r_0)$,
\be\label{cutoff eq4}
\al
\hat{\psi}(x)\geq \hat{\psi}(x_0)-|\d \hat{\psi}|(x')d(x,x_0)\geq & 0.4C_1(Ar_0^2)^{-\frac{n}{2}}-C_3(Ar_0^2)^{-\frac{n+1}{2}}\hat{\delta} r_0\\
=&  \hat{\delta}[(1+A^{-1})C_3](Ar_0^2)^{-\frac{n}{2}}.
\eal
\ee

Combining \eqref{cutoff eq2}, \eqref{cutoff eq3} and \eqref{cutoff eq4}, we see that function $\tilde{\psi}=\frac{1}{(1+A^{-1})C_3}(Ar_0^2)^{\frac{n}{2}}\hat{\psi}$ satisfies
\[
0\leq \tilde{\psi}\leq 1\ in\ B(x_0, r_0),\quad \tilde{\psi}\geq \hat{\delta}\ in\ B(x_0,\hat{\delta} r_0),
\]
\[
|\d \tilde{\psi}|\leq r_0^{-1},\ and\ |\Delta \tilde{\psi}|\leq r_0^{-2}.
\]

Now, let $\psi=\tilde{\psi}^3$. Then $\psi$ is $C^2$ differentiable and still satisfies (a)-(d).\\

{\it Step 2:} From step 1 and using the same covering argument as in \cite{CN} and also Lemma 6.13 in \cite{Bam}, we can prove the existence of the cut-off function $\phi$ in the lemma.  For a fixed small $\epsilon>0$,  let $N$ be the smallest integer such that $N$ balls of radius
$\e \delta r_0$ cover $B(x_0, 1.1 r_0)$.  We denote these balls by $B(p_i, \e \delta r_0)$, $i=1, ...N$.
By the volume comparison theorem and noncollapsing condition, $N$ is uniformly bounded.  Let $\psi_i$ be the cut-off function
obtained in step1 for the ball $B(p_i, \e r_0)$. Let $\eta$ be a smooth function from $[0, \infty)$ to $[0, 1]$ such that $\eta(s)=1$
when $s \ge 1$. Then it is easy to check the function
\[
\psi = \eta\left(\Sigma^N_{i=1} \delta^{-1} \psi_i \right)
\]is the desired cut-off function when $\e<0.1$.
\end{proof}
\medskip

\section{Segment inequality, excess estimate and $\e$-splitting map}

In this section, we will build the three essential tools: segment inequality, excess estimate and $\e$-splitting map for proving certain degeneration
results for Riemannian manifolds under the modified Ricci curvature bound.
From now on, unless otherwise stated , we always
assume that \eqref{riccond}, \eqref{condition V} and \eqref{noncollapsing-0}
hold, i.e.,
\be
\label{basichypo1}
\Ric+\frac{1}{2}\mc{L}_{V} g\geq - \lam g, \quad
|V|(y)\leq \frac{K}{d(y,O)^\a}, \quad \a \in [0, 1),
\ee
and
\be\label{basichypo2}
 \vol (B(y, 1)) \ge \rho>0, \quad \forall y \in \M.
\ee

Note that this condition allows gradient Ricci solitons whose potential function
 $L$  has bounded gradient.  It also covers the case when the Bakry-\'Emery Ricci curvature has a
lower bound and the potential function $L$ has bounded gradient.
In this case,  F. Wang and X.H. Zhu \cite{WZ} have obtained, among other things,
 degeneration results using
weighted manifold approach.   If both $|\nabla L|$ and
$\Delta L$ are bounded,  then, according to Tian and Z.L. Zhang \cite{TZ}, one can use a conformal transform to convert the metric to a new one with lower Ricci bound. Then the degeneration results follow from Cheeger and Colding's work.

In proving the degeneration results, we will follow the road maps by Cheeger and Colding.

\begin{itemize}

\item {\hfil \framebox{volume element comparison} $\Rightarrow$ \framebox{segment inequality}}
\medskip

\item {\hfil \framebox{volume comparison and mean value inequality}}

{\hfil $\Downarrow$}

{\hfil \framebox{Abresh-Gromoll excess estimate \cite{AG}}}
\medskip

\item {\hfil \framebox{Laplacian and volume comparison, excess estimate and gradient bound}}

     {\hfil $\Downarrow$}

     {\hfil \fbox{\parbox{0.7\textwidth}{existence of harmonic maps as $\e$-splitting maps with good gradient and Hessian bounds for balls close to Euclidean ones in G-H (Gromove-Hausdorff) sense;}}}
\medskip

\item {\hfil \fbox{existence of  $\e$-splitting maps  and Liouville measure}}

 {\hfil $\Downarrow$}

 {\hfil \fbox{volume continuity under G-H convergence;}}
\medskip

\item {\hfil \fbox{existence of  $\e$-splitting maps and segment inequality}}

 {\hfil $\Downarrow$}

 {\hfil \fbox{almost splitting theorem;}}
\medskip

\item {\hfil \fbox{existence of  $\e$-splitting maps, segment inequality and almost splitting theorem}}

     {\hfil $\Downarrow$}

     {\hfil \fbox{existence of metric cones.}}\\

\end{itemize}

Identical proofs will be omitted. Proof that needs moderate modifications will be presented in the appendices. Proofs that requires new ingredients will be presented in the main text.

Given a smooth function $f\geq 0$, let
\be\label{definition F}
\mathcal{F}_f(x_1,x_2)=\inf_{\gamma}\int_0^{d(x_1,x_2)} f(\gamma(s))ds,
\ee
where the infimum is taken over all minimal geodesics from $x_1$ to $x_2$.

We first prove a similar segment inequality as in \cite{ChCo1}. The proof is essentially the same as the proof of Theorem 2.11 in \cite{ChCo1}, where the key ingredient in the proof is the comparison of the volume elements, which in our case is \eqref{volume element}. For completeness, we present the proof in Appendix II. From Theorem \ref{thm volume element comparison}, we see that \eqref{volume element} only requires assumption \eqref{basichypo1}. More explicitly, we have

\begin{theorem}[Segment inequality]
\lab{thsegineq}
Let $F_f(x_1,x_2)$ be the function defined in \eqref{definition F}. Suppose that \eqref{basichypo1} is satisfied. Then for any $r>0$ and measurable sets
$A_1, A_2\subseteq B(x,r)$, we have
\be\label{segment}
\int_{A_1\times A_2} \mathcal{F}_f(x_1,x_2)\leq C(n,\lam,K,\a)(\vol(A_1)+\vol(A_2))\, r \int_{B(x,3r)}f dg.
\ee
\end{theorem}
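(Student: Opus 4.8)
The plan is to follow Cheeger--Colding's proof of Theorem 2.11 in \cite{ChCo1} essentially verbatim, the only input specific to our setting being the volume element bound \eqref{volume element} from Theorem \ref{thm volume element comparison}(a), which requires only \eqref{basichypo1}. First I would reduce to the ``one-sided'' estimate: it suffices to bound, for a fixed unit-speed minimal geodesic structure, the average over $A_1$ of $\int_0^{d(x_1,x_2)} f(\gamma_{x_1,x_2}(s))\,ds$ by integrating $f$ along geodesics emanating from points of $A_1$. By symmetry in $A_1,A_2$ one then only needs to handle the portion of each minimal geodesic lying in its first half (closer to $x_1$) and add the symmetric term; this is where the factor $\vol(A_1)+\vol(A_2)$ comes from.

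Next I would set up geodesic polar coordinates centered at a point $x_1\in A_1$. For $x_2\in A_2$, write the minimal geodesic $\gamma_{x_1,x_2}$ in these coordinates; for $s$ in the first half $[0, d(x_1,x_2)/2]$ the point $\gamma(s)$ lies at parameter $s$ along the radial direction $\theta = \theta(x_1,x_2)$, and the key change-of-variables is to replace integration over $x_2 \in A_2$ by integration over $(t,\theta)$ with $t \le 3r$ (since $A_2 \subseteq B(x,r)$ forces $d(x_1,x_2) < 3r$), weighted by the volume element $w(t,\theta)$. The crucial comparison step is then: for $0 < s \le t$,
\[
w(s,\theta) \;\le\; e^{C(\a)K(3r)^{1-\a}+\lam(3r)^2}\,\frac{s^{n-1}}{t^{n-1}}\,w(t,\theta)
\;\le\; C(n,\lam,K,\a)\,\frac{s^{n-1}}{t^{n-1}}\,w(t,\theta),
\]
which is exactly \eqref{volume element ratio} (and here one absorbs the $r$-dependence since we will only need $r$ up to a harmless power, or more precisely one keeps track of it and it enters as the single factor of $r$ on the right-hand side of \eqref{segment}, together with the scaling $t\le 3r$). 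This lets one dominate $\int_{A_2}\int_0^{d(x_1,x_2)/2} f(\gamma(s))\,ds\,d\vol(x_2)$ by a constant times $r$ times $\int_{B(x_1,3r)} f\,d\vol \le r\int_{B(x,3r)} f\,d\vol$ after a Fubini interchange of the $s$- and $t$-integrations (the $s^{n-1}/t^{n-1}$ factor makes the inner $t$-integral converge and produces the extra power of $r$).

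Finally I would integrate the resulting estimate over $x_1 \in A_1$, combine with the symmetric bound obtained by swapping the roles of $A_1$ and $A_2$ (handling the second half of each geodesic), and collect constants to obtain \eqref{segment} with $C(n,\lam,K,\a)$. I expect the main obstacle to be purely bookkeeping: correctly splitting each minimal geodesic at its midpoint so that both endpoints' polar-coordinate systems can be used, making the Fubini interchange rigorous despite the geodesics only being defined up to the cut locus (which is handled, as in Proposition \ref{prop laplacian comparison}, by working on the star-shaped complement of the cut locus and noting $w$ vanishes beyond it), and tracking the $r$-powers through the scaling $d(x_1,x_2) < 3r$. No genuinely new analytic difficulty arises beyond \eqref{volume element}, so the full argument is relegated to Appendix II.
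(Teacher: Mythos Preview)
Your overall strategy (split each geodesic at its midpoint, use polar coordinates from one endpoint together with the volume element comparison \eqref{volume element ratio}, then argue by symmetry) is exactly that of the paper and of \cite{ChCo1}. However, you have the two halves interchanged, and correspondingly the comparison inequality you wrote is in the wrong direction; as stated the argument does not close.

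Inequality \eqref{volume element ratio} says that $w(\cdot,\theta)/(\cdot)^{n-1}$ is almost nonincreasing, so for $0<s\le t\le 2r$ one has
\[
w(t,\theta)\;\le\;C(n,\lam,K,\a)\,\frac{t^{n-1}}{s^{n-1}}\,w(s,\theta),
\]
not the reverse inequality you quoted. With the correct direction, if you try to control the \emph{first} half $\int_0^{t/2} f(\gamma(s))\,ds$ from polar coordinates at $x_1$ (where $t=d(x_1,x_2)$), the factor $t^{n-1}/s^{n-1}$ is unbounded as $s\to 0$; after Fubini you are left with an integrand $f(\gamma(s))\,w(s,\theta)\,s^{-(n-1)}$ which cannot be reassembled into $\int_{B}f\,dg$.

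The fix, and what the paper does, is to handle the \emph{second} half $\int_{t/2}^{t} f(\gamma(u))\,du$ from $x_1$'s polar coordinates: for $u\in[t/2,t]$ the ratio $t^{n-1}/u^{n-1}\le 2^{n-1}$ is bounded, so $w(t,\theta)\le C\,w(u,\theta)$, and after enlarging both ranges to $[0,2r]$ one obtains directly
\[
\int_{A_2}\mathcal{F}_{f,2}(x_1,x_2)\,dg(x_2)\;\le\;C(n,\lam,K,\a)\,r\int_{B(x,3r)}f\,dg.
\]
The first half $\mathcal{F}_{f,1}$ is then handled symmetrically from $x_2$'s polar coordinates, producing the $\vol(A_2)$ term. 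With this correction your sketch becomes exactly the paper's proof.
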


The proof can be found in  Appendix II.

Next, we deal with the excess estimate. Fix $q_+$ and $q_-$ in $\M$, and define the excess function on $\M$ as
\[
e(y)=d(y,q_+)+d(y,q_-)-d(q_+,q_-).
\]
For any $x\in \M$, if we denote by
\[
b_{\pm}(y)=d(y,q_{\pm})-d(x,q_{\pm}),
\]
then
\be\label{b and e}
b_+(y)+b_-(y)=e(y)-e(x).\ee

Since we have the heat kernel bounds,  the following mean value type inequality can be obtained routinely by using Duhamel's formula.  One may imitate the proof of Corollary 4.8 in \cite{ZZ}, which is a version of Theorem 8.18 in \cite{GT} and Lemma 2.1 of Colding and Naber \cite{CN}.
\begin{lemma}\label{mean value theorem}
Let $u(x)$ be a nonnegative function satisfying
\begin{equation}
\Delta u\leq\xi(x),
\end{equation}
where $\xi(x)\geq0$ is a smooth function.

If \eqref{basichypo1} and \eqref{basichypo2} are satisfied, then for any $q>\frac{n}{2}$, there exists a constant $C=C(n,\lam,K,\a,\rho)$ such that
\begin{equation}
\oint_{B(x,r)}u(y)dy\leq C\left(u(x)+r^{2}\|\xi\|^*_{q,B(x,r)}\right)
\end{equation}
holds for any $x\in M$ and $0<r\leq 1$.

More generally, we have
\begin{equation}
\oint_{B(x,r)}u(y)dy\leq C\left(\inf_{B(x,r)}u(\cdot)+r^{2}\|\xi\|^*_{q,B(x,r)}\right).
\end{equation}
\end{lemma}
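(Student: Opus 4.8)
\textbf{Proof proposal for Lemma \ref{mean value theorem}.}

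The plan is to follow the Duhamel's formula approach, exploiting the Gaussian heat kernel bounds from Theorem \ref{thmheatkernel} together with the volume comparison and non-collapsing in \eqref{basichypo1}--\eqref{basichypo2}. First I would reduce to a local statement: since all hypotheses are scale-invariant in the appropriate sense and we only need $0<r\le 1$, it suffices to work on the ball $B(x,r)$ and to use the Dirichlet heat kernel $G_r(y,t;z)$ of $B(x,r)$, or alternatively the global heat kernel $G$ with a comparison argument. I would let $u$ solve $\Delta u\le\xi$, pick the heat semigroup $P_t$ generated by $\Delta$, and write, via Duhamel,
\[
P_t u(x)-u(x)=\int_0^t P_s(\Delta u)(x)\,ds\le \int_0^t P_s\xi(x)\,ds,
\]
so that $u(x)\ge P_t u(x)-\int_0^t P_s\xi(x)\,ds$. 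Choosing $t\sim r^2$ and using the lower Gaussian bound $G(x,t;z)\ge C_1 t^{-n/2}e^{-C_2 d^2/t}$ on $B(x,r)$, the term $P_t u(x)=\int G(x,t;z)u(z)\,dz$ controls a fixed fraction of $\oint_{B(x,r)}u$ from below (here the non-collapsing and volume comparison ensure $\vol(B(x,r))\sim r^n$, so the averaged integral is comparable to the heat-kernel-weighted integral over $B(x,r)$). Rearranging gives
\[
\oint_{B(x,r)}u(y)\,dy\le C\Big(u(x)+\int_0^{r^2} P_s\xi(x)\,ds\Big).
\]

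Next I would estimate the error term $\int_0^{r^2}P_s\xi(x)\,ds=\int_0^{r^2}\!\!\int G(x,s;z)\xi(z)\,dz\,ds$. Using the upper Gaussian bound and Hölder's inequality with exponents $q>n/2$ and its conjugate $q'$, one bounds $\int G(x,s;z)\xi(z)\,dz$ by $\|\xi\|^*_{q,B(x,r)}$ times an integral of $G(x,s;\cdot)^{q'}$ against the volume, which after the $s$-integration and the volume comparison $\vol(B(x,r))\sim r^n$ yields a factor $r^2$; this is exactly where $q>n/2$ is needed, to make $\int_0^{r^2}s^{-n/2\cdot(q'-1)\cdots}\,ds$ (i.e. the relevant power of $s$) integrable near $s=0$. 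One should be slightly careful that $\xi$ lives only on $B(x,r)$, so one either uses the Dirichlet heat kernel on a slightly larger ball or truncates $\xi$; the Gaussian off-diagonal decay makes the contribution from outside negligible, which is a routine but necessary check. Combining the two displays gives the first asserted inequality.

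For the stronger statement with $\inf_{B(x,r)}u$ in place of $u(x)$, I would simply observe that the argument above applied at an arbitrary base point $x'\in B(x,r)$, but integrating the heat kernel over $B(x,r)$ rather than $B(x',r)$, still works because the Gaussian lower bound $G(x',s;z)\ge C_1 s^{-n/2}e^{-C_2 d^2(x',z)/s}$ is uniformly bounded below on $B(x,r)\times B(x,r)$ for $s\sim r^2$ (all such distances are $\le 2r$), so $P_{r^2}u(x')\ge c\,\oint_{B(x,r)}u$; taking the infimum over $x'$ yields the claim. The main obstacle, as in the classical case (Theorem 8.18 in \cite{GT}, Lemma 2.1 of \cite{CN}), is bookkeeping the heat-kernel error term under only $L^q$ control of $\xi$ with $q>n/2$ and matching the $r$-powers correctly using volume comparison; the underlying semigroup manipulation is soft. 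Since, as the statement notes, this mirrors Corollary 4.8 in \cite{ZZ}, I would present it concisely, citing the parallel computations and emphasizing only the points where the modified Ricci condition enters (namely, solely through the heat kernel bounds of Theorem \ref{thmheatkernel} and the volume comparison of Theorem \ref{thm volume element comparison}).
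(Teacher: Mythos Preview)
Your proposal is correct and follows precisely the approach the paper indicates: the paper does not give a proof of this lemma but states that it ``can be obtained routinely by using Duhamel's formula'' via the heat kernel bounds of Theorem~\ref{thmheatkernel}, citing Corollary~4.8 in \cite{ZZ}, Theorem~8.18 in \cite{GT}, and Lemma~2.1 of \cite{CN}. Your sketch---write $P_t u(x)\le u(x)+\int_0^t P_s\xi(x)\,ds$ from Duhamel, use the Gaussian lower bound at $t\sim r^2$ together with $\vol(B(x,r))\sim r^n$ to dominate $\oint_{B(x,r)}u$ by $P_{r^2}u(x)$, and control the error term by H\"older with $q>n/2$ to produce the factor $r^2\|\xi\|^*_{q,B(x,r)}$---is exactly this routine argument, and your remark about localizing via the Dirichlet heat kernel (or a cutoff) to keep $\xi$ confined to $B(x,r)$ addresses the only genuine technicality.
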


Applying the above lemma to the excess function yields the following excess estimate. In the context below, we will use $\Psi(\e_1,\e_2,\cdots,\e_k| c_1,c_2,\cdots,c_N)$ to denote a nonnengative function such that for any fixed $c_1,c_2,\cdots,c_N$,
\[
\lim_{\e_1,\e_2,\cdots,\e_k\rightarrow 0}\Psi=0.
\]

\begin{theorem}[Excess estimate]
\label{excess}
Suppose that \eqref{basichypo1} and \eqref{basichypo2} are satisfied. Assume in addition that \be\label{excess assumption 1}
d(x,q_{\pm})= O( \lam^{-\frac{1}{2}})
\ee
and
\be\label{excess assumption 2}
e(x)\leq \e R,
\ee
for some $R\leq 1$. Then there exists a $\Psi=\Psi(\lam,K,\e|n,R,\a,\rho)$ such that
\begin{equation}\label{excess estimate}
e(y)\leq \Psi R,\ in\  B(x,R).
\end{equation}
\end{theorem}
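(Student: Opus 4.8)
The plan is to follow the classical Abresch--Gromoll argument, adapted to the modified Ricci curvature setting, using the Laplacian comparison of Proposition \ref{prop laplacian comparison} and the mean value inequality of Lemma \ref{mean value theorem}. The starting point is the identity \eqref{b and e}: since $e(x)\le \e R$, it suffices to bound $b_+(y)+b_-(y)$ in $B(x,R)$. Both $b_\pm$ are distance-type functions (up to an additive constant), so by Proposition \ref{prop laplacian comparison}, away from the cut loci of $q_\pm$ we have, writing $s_\pm(y) = d(y,q_\pm)$,
\[
\Delta b_\pm(y) = \Delta s_\pm(y) \le \frac{n-1}{s_\pm(y)} + \frac{\lam}{3}s_\pm(y) + \langle V, \nabla s_\pm\rangle + \frac{C(\a)K}{s_\pm(y)^\a}.
\]
Under the assumption $d(x,q_\pm) = O(\lam^{-1/2})$ and for $y\in B(x,R)$ with $R\le 1$, the quantities $s_\pm(y)$ are comparable to $d(x,q_\pm)$, so the terms $\frac{\lam}{3}s_\pm$, $\langle V,\nabla s_\pm\rangle$ and $\frac{C(\a)K}{s_\pm^\a}$ are all controlled: the point is that $q_\pm$ are ``far'' from the region $B(x,R)$ where we work, so $s_\pm$ stays bounded below away from $0$, hence $\frac{n-1}{s_\pm}$ is bounded, and more importantly its contribution, together with the $V$ and curvature terms, is $\Psi$-small after being integrated against the right test function — this is where $d(x,q_\pm)=O(\lam^{-1/2})$ enters.

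The core of the proof is then the standard comparison with the Green's-function-type barrier. Following Abresch--Gromoll and its treatment in Cheeger--Colding \cite{ChCo1}, I would fix $y\in B(x,R)$, set $r = d(x,y) \le R$, and construct a function $G$ on an annulus around $q_+$ (or a ball) of the form $G = \phi(s_+)$ with $\phi$ chosen so that $\Delta G$ absorbs the ``bad'' terms above, giving $\Delta(b_+ - G) \ge -(\text{small})$ in the relevant region; similarly for $b_-$. Actually, the cleanest route here, given that the paper has the mean value inequality Lemma \ref{mean value theorem} rather than a pointwise maximum principle on annuli, is to apply Lemma \ref{mean value theorem} directly to $u = e$ (which satisfies $\Delta e = \Delta b_+ + \Delta b_-\le \xi$ in the barrier sense for an explicit $\xi\ge 0$ built from the comparison terms). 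The third, strengthened conclusion of Lemma \ref{mean value theorem}, namely
\[
\oint_{B(x,r)} e(y)\,dy \le C\Big(\inf_{B(x,r)} e + r^2 \|\xi\|^*_{q,B(x,r)}\Big),
\]
combined with $\inf_{B(x,r)} e \le e(x)\le \e R$ and the estimate $\|\xi\|^*_{q,B(x,r)}\le \Psi(\lam,K,\e\,|\,n,R,\a,\rho)\cdot R^{-1}$ coming from Proposition \ref{prop laplacian comparison} and the $L^q$-bound \eqref{Lq bound of V} on $V$, yields $\oint_{B(x,r)} e\,dy\le \Psi R$. One must take a slightly larger ball, say $B(x,2R)$, and run this on all sub-balls, or more simply invoke the standard fact that $e\ge 0$ is ``almost subharmonic'' and use a Harnack-type / iteration argument to pass from the integral average bound to the pointwise bound $e(y)\le \Psi R$; alternatively one can run the comparison so that the test point $y$ is the center, i.e.\ apply the mean value inequality on balls centered at $y$ for small radii and let the radius shrink.

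The main obstacle, and the place requiring care rather than routine computation, is the handling of the vector field / curvature terms $\langle V, \nabla s_\pm\rangle$ and $\frac{C(\a)K}{s_\pm^\a}$ in the effective right-hand side $\xi$: unlike the classical case where $\Delta e$ is bounded purely in terms of $\frac{n-1}{s_\pm}$ (small because $q_\pm$ are far), here $\xi$ genuinely contains $|V|$, which is only in $L^q$ (for $q<n/\a$), not $L^\infty$. Thus one cannot get a pointwise bound on $\xi$; the argument must be arranged so that only $\|\xi\|^*_{q,B(x,r)}$ is needed, which is exactly why Lemma \ref{mean value theorem} is stated with an $L^q$ norm of $\xi$ and why \eqref{Lq bound of V} is available. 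I would need to check that $q>n/2$ can be chosen simultaneously with $q<n/\a$ (possible since $\a<1<2$, so $n/\a > n/2$), so that both hypotheses of Lemma \ref{mean value theorem} and Lemma \ref{lemma Lq bound of V} are met. A secondary technical point is dealing with the cut loci of $q_\pm$, where $\Delta s_\pm$ only satisfies the comparison in the distributional sense; this is handled as usual since the distributional inequality is exactly what feeds into the mean value inequality (the barrier/weak-solution formulation), so no extra work beyond citing the star-shapedness argument already used in Proposition \ref{prop laplacian comparison} is required. Finally, collecting constants, one checks that as $\lam, K, \e\to 0$ with $n, R, \a, \rho$ fixed, every error term tends to $0$, giving the claimed $\Psi = \Psi(\lam,K,\e\,|\,n,R,\a,\rho)$.
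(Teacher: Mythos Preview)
Your proposal is correct and follows essentially the same route as the paper: apply Lemma \ref{mean value theorem} to $u=e$ with $\xi$ built from the Laplacian comparison (Proposition \ref{prop laplacian comparison}) and control $\|\xi\|^*_{q,B(x,R)}$ via Lemma \ref{lemma Lq bound of V}, using $\inf_{B(x,R)} e \le e(x)\le \e R$ to obtain $\oint_{B(x,R)} e \le \Psi_1 R$. The only place where you are vague is the passage from this integral bound to the pointwise estimate; the paper's device is simply: for $y\in B(x,(1-\Psi_2)R)$, volume comparison gives $\oint_{B(y,\Psi_2 R)} e \le \Psi_1 \Psi_2^{-n} R$, choose $\Psi_2=\Psi_1^{1/(n+1)}$, pick $z\in B(y,\Psi_2 R)$ with $e(z)\le \Psi_2 R$, and use the elementary Lipschitz bound $|\nabla e|\le 2$ to conclude $e(y)\le 3\Psi_2 R$.
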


\begin{proof} The proof, which is different from the original one in \cite{AG}, is inspired by Remark 2.9 \cite{CN}.
Note that by \eqref{b and e},
\[
\Delta e(y)=\Delta b_++\Delta b_-\leq \frac{n-1}{d(y,q_+)}+\frac{n-1}{d(y,q_-)}+\psi_++\psi_-,
\]
where by Proposition \ref{prop laplacian comparison},
\be\label{eq psi}
\psi_{\pm}=\left(\Delta d(y,q_{\pm})-\frac{n-1}{d(y,q_{\pm})}\right)_+\leq \frac{\lam}{3}d(y,q_{\pm})+|V|(y)+\frac{C(\a)K}{d^{\a}(y,q_{\pm})}.\ee
According to \eqref{eq psi}, Lemma \ref{lemma Lq bound of V} and Lemma \ref{mean value theorem}, for $y\in B(x,R)$, we have
\[
\al
\oint_{B(x,R)}e(y)dy\leq& C\left(\inf_{B(x,R)}u(\cdot)+\frac{2(n-1)R^2}{\lam^{-\frac{1}{2}}-R}+R^{2}||\psi_+||^*_{q,B(x,R)}+R^{2}||\psi_-||^*_{q,B(x,R)}\right)\\
\leq & C\left(\e R+\frac{2(n-1)R^2}{\lam^{-\frac{1}{2}}-R}+\lam R^2(\lam^{-\frac{1}{2}}+R)+\frac{KR^2}{(\lam^{-\frac{1}{2}}-R)^\a}+R^{2-\a}K\right)\\
=&\Psi_1 R.
\eal
\]
Therefore, by using Theorem \ref{thm volume element comparison}, for any $y\in B(x,(1-\Psi_2)R)$, we have
\[\al
\oint_{B(y,\Psi_2 R)}e(z)dz\leq&\frac{\v(B(x,R))}{\v(B(y,\Psi_2 R))}\oint_{B(x,R)}e(z)dz\\
\leq&\frac{\v(B(y,2R))}{\v(B(y,\Psi_2 R))}\Psi_1 R\\
\leq&\frac{\Psi_1}{\Psi_2^n}R.
\eal\]
Hence, if we choose $\Psi_2=\Psi_1^{\frac{1}{n+1}}$, then the above inequality becomes
\[
\oint_{B(y,\Psi_2 R)}e(z)dz\leq \Psi_2 R.
\]
It implies that there exists a $z\in B(y,\Psi_2 R)$ such that
\[
e(z)\leq \Psi_2 R,
\]
and hence by mean value theorem
\[
e(y)\leq e(z)+|\d e|d(y,z)\leq \Psi_2 R+2\Psi_2 R=3\Psi_2 R.
\]
Since $y$ is an arbitrary point in $B(x,(1-\Psi_2)R)$, applying mean value theorem one more time for points in $B(x,R)\setminus B(x,(1-\Psi_2)R)$ completes the proof of the theorem.
\end{proof}

Let $h_{+}$ and $h_-$ be harmonic functions in $B(x,R)$ such that $h_{\pm}\huge|_{\pa B(x,R)}=b_{\pm}$. Since the excess estimate \eqref{excess estimate} has been established, following the method in section 6 of \cite{ChCo1} (see also section 9 in \cite{Ch}), it is not hard to show that

\begin{lemma}\label{harmonic approximation}
 Under the basic assumption \eqref{basichypo1}, \eqref{basichypo2}, suppose \eqref{excess assumption 1} and \eqref{excess assumption 2} hold i.e. $d(x,q_{\pm})= O( \lam^{-\frac{1}{2}})$
and
$e(x)\leq \e R$. Then there exists an $r_0=r_0(n,\lam, K,\a, \rho)$ such that for any $R\leq r_0$ and some $\Psi=\Psi(\lam, K,\e|n,R,\a,\rho)$, we have\\
(1) $\displaystyle |h_{\pm}-b_{\pm}|\leq \Psi R$ in $B(x,R)$;\\
(2) $\displaystyle \oint_{B(x,R)}|\d b_{\pm}-\d h_{\pm}|^2\leq \Psi$;\\
(3) $\displaystyle \oint_{B(x,\frac{1}{2}R)}|\d^2 h_{\pm}|^2\leq \Psi R^{-2}$.
\end{lemma}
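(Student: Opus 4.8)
The plan is to follow Section~6 of \cite{ChCo1}, replacing each of its comparison‑geometry inputs by the results of the previous sections. All work is done on $B(x,R)$ with $R\le r_{0}$ for an $r_{0}=r_{0}(n,\lam,K,\a,\rho)$ small enough that Theorems~\ref{thmpoisgrad} and \ref{thm maximum principle} and Lemma~\ref{lem cutoff} apply, and $\Psi$ denotes a quantity of the form $\Psi(\lam,K,\e\,|\,n,R,\a,\rho)$, which may change from line to line. Let $f_{\pm}\ge0$ be the right‑hand side of the Laplacian comparison \eqref{Laplace comparison} applied to $s=d(\cdot,q_{\pm})$, with $\langle V,\d s\rangle$ bounded by $|V|$, so that $\Delta b_{\pm}\le f_{\pm}$ in the distribution sense. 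Because $d(x,q_{\pm})$ is comparable to $\lam^{-1/2}\gg R$ and Lemma~\ref{lemma Lq bound of V} bounds the $|V|$‑part, one has $R\|f_{\pm}\|^{*}_{q,B(x,R)}\le\Psi$ for a fixed $q\in(n/2,n/\a)$ and $R\oint_{B(x,R)}f_{\pm}\le\Psi$, both $\to0$ as $\lam,K\to0$; also $|h_{\pm}|\le R$ on $B(x,R)$ since $h_{\pm}$ is harmonic and $|b_{\pm}|\le R$ on $\partial B(x,R)$. To prove $(1)$, I would first note that by the excess estimate (Theorem~\ref{excess}) $0\le e\le\Psi R$ on $B(x,R)$ while $e(x)\le\e R$, so $|b_{+}+b_{-}|=|e-e(x)|\le\Psi R$; since $h_{+}+h_{-}$ is harmonic with this boundary value, $|h_{+}+h_{-}|\le\Psi R$ on $B(x,R)$. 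Since moreover $\Delta(h_{\pm}-b_{\pm})=-\Delta b_{\pm}\ge-f_{\pm}$ and $h_{\pm}-b_{\pm}=0$ on $\partial B(x,R)$, the maximum principle Theorem~\ref{thm maximum principle} gives $h_{\pm}-b_{\pm}\le C(n)R^{2}\|f_{\pm}\|^{*}_{q,B(x,R)}\le\Psi R$; the identity $h_{+}-b_{+}=(h_{+}+h_{-})-(b_{+}+b_{-})-(h_{-}-b_{-})$ and its mirror then supply the reverse bounds $h_{\pm}-b_{\pm}\ge-\Psi R$, proving $(1)$.

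For $(2)$, since $b_{\pm}-h_{\pm}\in W^{1,2}_{0}(B(x,R))$ and $h_{\pm}$ is the energy‑minimizing extension of its boundary trace, testing the equation for $h_{\pm}$ against $b_{\pm}-h_{\pm}$ and integrating by parts against the distributional Laplacian of $b_{\pm}$ (a Radon measure) gives $\int_{B(x,R)}|\d(b_{\pm}-h_{\pm})|^{2}=-\int_{B(x,R)}(b_{\pm}-h_{\pm})\,\Delta b_{\pm}$, which I would estimate by $\|b_{\pm}-h_{\pm}\|_{L^{\infty}(B(x,R))}\cdot|\Delta b_{\pm}|(B(x,R))$. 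For the total variation, the standard structure of $\Delta d(\cdot,q_{\pm})$ — a locally integrable function plus a nonpositive measure on the cut locus — gives $(\Delta b_{\pm})_{+}\le f_{\pm}\,dV$, while the divergence theorem gives $\Delta b_{\pm}(B(x,R))=\int_{\partial B(x,R)}\partial_{\nu}b_{\pm}$, which has absolute value at most $\vol(\partial B(x,R))$ since $|\d b_{\pm}|=1$; hence $|\Delta b_{\pm}|(B(x,R))\le2\int_{B(x,R)}f_{\pm}\,dV+\vol(\partial B(x,R))$. Inserting $(1)$, the bound $R\oint_{B(x,R)}f_{\pm}\le\Psi$, and $\vol(\partial B(x,R))\le CR^{-1}\vol(B(x,R))$ (from volume comparison, Theorem~\ref{thm volume element comparison}, and the noncollapsing \eqref{basichypo2}), and dividing by $\vol(B(x,R))$, proves $(2)$.

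For $(3)$, take a cut‑off $\phi$ from Lemma~\ref{lem cutoff} with $\phi\equiv1$ on $B(x,R/2)$, $\supp\phi\subset B(x,0.9R)$, $|\d\phi|\le C/R$ and $|\Delta\phi|\le C/R^{2}$; applying Theorem~\ref{thmpoisgrad} on balls of radius comparable to $R$ around points of $B(x,0.9R)$, together with $|h_{\pm}|\le R$, also gives $|\d h_{\pm}|\le C$ on $\supp\phi$. Bochner's formula for the harmonic function $h_{\pm}$ and two integrations by parts give $\int|\d^{2}h_{\pm}|^{2}\phi^{2}=\frac12\int(\Delta\phi^{2})|\d h_{\pm}|^{2}-\int\Ric(\d h_{\pm},\d h_{\pm})\phi^{2}$. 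For the first term, since $\int\Delta\phi^{2}=0$ I would replace $|\d h_{\pm}|^{2}$ by $|\d h_{\pm}|^{2}-|\d b_{\pm}|^{2}$, use $\big||\d h_{\pm}|^{2}-|\d b_{\pm}|^{2}\big|\le(|\d h_{\pm}|+1)|\d(h_{\pm}-b_{\pm})|$, and then Cauchy--Schwarz with $(2)$ and $\int_{B(x,R)}|\d h_{\pm}|^{2}\le\vol(B(x,R))$, bounding it by $C\Psi^{1/2}R^{-2}\vol(B(x,R))$. For the Ricci term I would use $-\Ric(\d h_{\pm},\d h_{\pm})\le\lam|\d h_{\pm}|^{2}+\frac12\mc L_{V}g(\d h_{\pm},\d h_{\pm})$ and, writing $\frac12\mc L_{V}g(\d h_{\pm},\d h_{\pm})=\langle\nabla_{\d h_{\pm}}V,\d h_{\pm}\rangle$, integrate by parts to move the derivative off $V$; the terms produced are dominated by $\int|\Hess h_{\pm}|\,|V|\,\phi^{2}$ and $\int|V|\,\phi\,|\d\phi|$ (after using $|\d h_{\pm}|\le C$), which are controlled by $\oint_{B(x,R)}|V|^{2}\le CK^{2}R^{-2\a}$ (Lemma~\ref{lemma Lq bound of V}), Cauchy--Schwarz and Young's inequality, a small multiple of $\int|\d^{2}h_{\pm}|^{2}\phi^{2}$ going back to the left side. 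Collecting everything and dividing by $\vol(B(x,R/2))\ge c\,\vol(B(x,R))$ gives $\oint_{B(x,R/2)}|\d^{2}h_{\pm}|^{2}\le\Psi R^{-2}$, which is $(3)$.

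The main difficulty is bookkeeping concentrated in two spots. In $(2)$ one needs the full total variation of $\Delta b_{\pm}$, not merely its positive part, which is why the one‑sided Laplacian comparison must be paired with the divergence theorem and the estimate on $\vol(\partial B(x,R))$. In $(3)$ the Lie‑derivative term of Bochner's formula cannot be absorbed directly (as it could be in the Bakry--\'Emery case), so it must be integrated by parts, which produces a $|V|^{2}$ term whose integrability and smallness after the factor $R^{2-2\a}$ rely on Lemma~\ref{lemma Lq bound of V} — precisely where the existence of some $q\in(n/2,n/\a)$, i.e.\ the hypothesis $\a<1$, enters. When $|V|$ is bounded $(\a=0)$ both steps are routine and the noncollapsing hypothesis may be dropped.
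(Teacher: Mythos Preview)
Your proposal is correct and follows essentially the same route as the paper: in (1) the one--sided bound via the maximum principle (Theorem~\ref{thm maximum principle}) applied to $\Delta(h_\pm-b_\pm)\ge -f_\pm$, combined with the excess estimate to control $b_++b_-$ and hence $h_++h_-$, is exactly the paper's argument; in (2) your total--variation bound $|\Delta b_\pm|(B(x,R))\le 2\int f_\pm + \vol(\partial B(x,R))$ is the same computation the paper carries out (it writes $\oint(\Delta b_+)_--( \Delta b_+)_+$ and bounds it by $\vol(\partial B)/\vol(B)$, then treats $(\Delta b_+)_+$ by $\psi_+$); and in (3) your Bochner argument with the integration by parts on the Lie--derivative term, absorbing $\int\phi^2|\Hess h_\pm||V|$ via Young and controlling $\oint|V|^2$ by Lemma~\ref{lemma Lq bound of V}, is precisely what the paper does (with $\phi$ in place of $\phi^2$ and the same subtraction of $1=|\d b_\pm|^2$ inside $\Delta\phi$). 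The only cosmetic differences are your use of $\phi^2$ versus $\phi$ and your explicit phrasing of $\Delta b_\pm$ as a measure.
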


\hspace{-.5cm}{\it Proof of (1):} Since $\Delta h_{\pm}=0$, by Proposition \ref{prop laplacian comparison}, we have
\[
\Delta (h_{\pm}-b_{\pm})\geq -\psi_{\pm},
\]
where $\psi_{\pm}=\frac{\lam}{3}d(\cdot,q_{\pm})+|V|(\cdot)+\frac{C(\a)K}{d(\cdot,q_{\pm})^{\a}}$.

Thus, Theorem \ref{thm maximum principle} implies that
\be\label{ea 1}
\al
h_{\pm}-b_{\pm}\leq & C(n)R^2||\psi_{\pm}||^*_{q,B(x,R)}\\
\leq& C(n,\lam,K,\a,\rho)R^2\left[\lam(\lam^{-\frac{1}{2}}+R)+ KR^{-\a}+K(\lam^{-\frac{1}{2}}-R)^{-\a}\right]\\
=&\Psi R.
\eal
\ee

For the lower bounds, notice that $b_+(y)+b_-(y)=e(y)-e(x)$. It follows from Theorem \ref{excess} that
\be\label{ea 2}
-\e R\leq b_+(y)+b_-(y)\leq \Psi R,
\ee
which in turn, by the maximal principle, implies that
\be\label{ea 3}
h_+(y)+h_-(y)\geq-\e R.
\ee
Combining \eqref{ea 1}, \eqref{ea 2} and \eqref{ea 3}, we derive
\[
b_+\leq -b_-+\Psi R\leq -h_-+2\Psi R\leq h_++2\Psi R+\e R,
\]
i.e.,
\[
h_+-b_+\geq -\Psi R.
\]
A lower bound for $h_--b_-$ can be obtained in a similar way.\\

\hspace{-.5cm}{\it Proof of (2):} We only prove the result for $h_+$, the proof for $h_-$ is similar.
By (1), Theorem \ref{thm volume element comparison} and \eqref{ea 1}, we get
\[
\al
\oint_{B(x,R)}|\d h_{+}-\d b_+|^2=&\oint_{B(x,R)}(h_+-b_+)\Delta b_+\\
\leq&  \Psi R\left|\oint_{B(x,R)}(\Delta b_+)_- - (\Delta b_+)_+\right|+2\Psi R\oint_{B(x,R)}(\Delta b_+)_+\\
\leq& \left(\frac{\v(\pa B(x,R))}{\v(B(x,R))}\right)\Psi R +2\Psi R\oint_{B(x,R)}\psi_+\\
\leq& \Psi .\\
\eal
\]

\hspace{-.5cm}{\it Proof of (3):} Again, we only present the proof for $h_+$.
By Lemma \ref{lem cutoff}, we may choose a cut-off function $\phi$ so that
\[supp\,\phi\subset B(x,R),\quad \phi|_{B(x,R/2)}=1,\quad |\d \phi|^2+|\Delta \phi|\leq \frac{C(n,\lam,K,\a,\rho)}{R^2}.\]
By the Bochner's formula,
\be\label{ap 1}
\al
\oint_{B(x,R)}2\phi|\d^2 h_+|^2=&\oint_{B(x,R)}\phi\left(\Delta|\d h_+|^2-2Ric(\d h_+,\d h_+)\right)\\
\leq&\oint_{B(x,R)}\phi\left(\Delta(|\d h_+|^2-1)+2\d_iV_j\d_ih_+\d_jh_++2\lam |\d h_+|^2\right).
\eal
\ee
Notice that
\[
\al
\oint_{B(x,R)}2\phi\d_iV_j\d_ih_+\d_jh_+=&\oint_{B(x,R)}-2\d_i\phi V_j\d_ih_+\d_jh_+ - 2\phi V_j\d_i h_+\d_{i}\d_jh_+\\
\leq& \oint_{B(x,R)}2|\d \phi||V||\d h_+|^2 + \oint_{B(x,R)}\phi|\d^2h_+|^2+\phi|V|^2|\d h_+|^2.
\eal
\]
Thus, \eqref{ap 1} becomes
\be\label{ap 2}
\oint_{B(x,R)}\phi|\d^2 h_+|^2\leq \oint_{B(x,R)}(|\d h_+|^2-1)\Delta\phi+2|\d \phi||V||\d h_+|^2+\phi|V|^2|\d h_+|^2+2\lam |\d h_+|^2.
\ee
By the gradient estimate in Theorem \ref{thmpoisgrad} and the assumed bound on $V$, we see that
\be\label{ap 3}
|\d h_+|^2\leq CR^{-2}(||h_+||^*_{2,B(x,2R)})^2\leq CR^{-2}(|||b_+|+\Psi R||^*_{2,B(x,2R)})^2\leq C(n,
\lam, K,\a,\rho, R).
\ee
Then, using the conclusion in (2), Lemma \ref{lemma Lq bound of V} and \eqref{ap 3} for the RHS of \eqref{ap 2} yields
\[
\oint_{B(x,R)}\phi|\d^2 h_+|^2\leq \Psi R^{-2},
\]
which finishes the proof of (3) due to the fact that $\phi|_{B(x,\frac{1}{2}R)}=1$ and the volume comparison theorem \ref{thm volume element comparison}.\qed\\

Lemma \ref{harmonic approximation} together with the segment inequality implies, as in Theorem 6.62 \cite{ChCo1},

\begin{theorem}
If \eqref{basichypo1}, \eqref{basichypo2}, \eqref{excess assumption 1} and \eqref{excess assumption 2} hold, then for some $r_0=r_0(n,\lam, K,\a, \rho)$ and any $R\leq r_0$, there exists a metric space $X$ such that for some ball $B((0,\tilde{x}),\frac{1}{4}R)\subseteq \mathbb{R}\times X$, with the product metric , we have
\[
d_{GH}\left(B(x,\frac{1}{4}R), B((0,\tilde{x}),\frac{1}{4}R)\right)\leq \Psi.
\]

\end{theorem}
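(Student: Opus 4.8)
The plan is to follow the classical Cheeger--Colding argument (Theorem 6.64 in \cite{ChCo1}, or Section 9 in \cite{Ch}) for producing an almost-splitting off a single line-like direction, using the harmonic approximations $h_\pm$ from Lemma \ref{harmonic approximation} as the splitting coordinate. The one function we split off is $\mathbf{b} := h_+ = -h_- + O(\Psi R)$ (the two agree up to $\Psi R$ by part (1) of that lemma, after noting $b_+ + b_- = e(y)-e(x)$ and $e \le \Psi R$ on $B(x,R)$ by Theorem \ref{excess}). We already have in hand, with constants depending only on $n,\lam,K,\a,\rho$, exactly the three ingredients needed: (i) $|\mathbf b - b_\pm| \le \Psi R$, so $\mathbf b$ is close to a distance-type function; (ii) $\oint_{B(x,R)}|\d \mathbf b - \d b_\pm|^2 \le \Psi$, the key $L^2$ gradient control; (iii) $\oint_{B(x,R/2)}|\d^2 \mathbf b|^2 \le \Psi R^{-2}$, the integral Hessian bound; together with the segment inequality (Theorem \ref{thsegineq}) and the volume comparison/doubling (Theorem \ref{thm volume element comparison}).

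The steps, in order, are as follows. First, from (ii) and (iii) plus the segment inequality, derive the \emph{pointwise-in-an-averaged-sense} Pythagorean behavior: for most pairs of points $y,z \in B(x,\tfrac14 R)$ one has $d(y,z)^2 \approx |\mathbf b(y)-\mathbf b(z)|^2 + d_\perp(y,z)^2$, where $d_\perp$ will become the distance in the factor $X$. Concretely, one integrates $|\d \mathbf b|$ along minimal geodesics: the segment inequality converts the $L^2$-bound $\oint|\,|\d \mathbf b|^2 - 1\,| \le \Psi$ (itself a consequence of (ii) and $|\d b_\pm| = 1$ a.e.) into the statement that along a definite fraction of geodesics $|\d \mathbf b| \equiv 1$ up to $\Psi$, so $\mathbf b$ is infinitesimally an isometry in its own direction; the Hessian bound (iii), again fed through the segment inequality, forces $\d \mathbf b$ to be nearly parallel along most geodesics, which is what yields the product (Pythagorean) relation. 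Second, define $X$ as (a metric quotient / Gromov--Hausdorff limit model of) a level set $\mathbf b^{-1}(c) \cap B(x,\tfrac14 R)$ with the induced length metric, and define the candidate map $B(x,\tfrac14 R) \to \IR \times X$ by $y \mapsto (\mathbf b(y), \pi(y))$ where $\pi$ is nearest-point projection onto that level set. Third, check this map is a $\Psi$-Gromov--Hausdorff approximation: surjectivity up to $\Psi R$ onto $B((0,\tilde x),\tfrac14 R)$ is routine from (i); the distance-distortion estimate is exactly the averaged Pythagorean relation from Step 1, upgraded from "most pairs" to "all pairs" by a standard argument using volume doubling (a bad pair can be connected through a good intermediate point, at the cost of another $\Psi$).

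The main obstacle — as always in this circle of ideas — is Step 1: promoting the integral bounds (ii)--(iii) to a genuine metric (Pythagorean) statement for almost all pairs of points. This is the technical heart of Cheeger--Colding, and in the present setting it must be run using \emph{our} volume element comparison \eqref{volume element} and segment inequality rather than the Bishop--Gromov one; since Theorem \ref{thsegineq} has already been established under only \eqref{basichypo1}, and the gradient/Hessian estimates under \eqref{basichypo1}--\eqref{basichypo2}, the argument goes through essentially verbatim, but care is needed because the extra vector-field terms (the $K/s^\a$ and $|V|$ contributions) enter the integrands; one absorbs them using Lemma \ref{lemma Lq bound of V}, exactly as was done in the proof of Lemma \ref{harmonic approximation}(3). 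All remaining steps are the standard diagonal/limit bookkeeping and will be omitted or relegated to an appendix, since they are identical to \cite{ChCo1}.
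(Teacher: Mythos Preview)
Your proposal is correct and follows essentially the same approach as the paper: the paper simply states that Lemma \ref{harmonic approximation} together with the segment inequality (Theorem \ref{thsegineq}) yields the result by the argument of Theorem 6.62 in \cite{ChCo1}, and your sketch is precisely a fleshed-out version of that Cheeger--Colding argument with the requisite substitutions (our volume element comparison, our segment inequality, and the $L^q$ bound on $V$ from Lemma \ref{lemma Lq bound of V}).
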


The above theorem amounts to the following almost splitting theorem, as in Theorem 6.64 \cite{ChCo1}.

\begin{theorem}[Almost splitting]
Let $(\M_i,d_i)\xrightarrow{d_{GH}}(Y,d)$, where $d_i$ is the distance function induced by the Riemaniann metric $g_i$ on $\M$. Suppose that
\[
Ric_{\M_i}+\frac{1}{2}\mathcal{L}_{V_i}g_{\M_i}\geq -\lam_i g_{\M_i},\ |V_i|(y_i)\leq \frac{K_i}{d^\a_{i}(y_i,O_i)},\ and\ \vol(B(y_i,1))\geq \rho,
\]
where $O_i$ is a fixed point in $\M_i$, $y_i$ is an arbitrary point in $\M_i$, and $\lam_i, K_i\rightarrow 0$ as $i\rightarrow\infty$.

If $Y$ contains a line, then $Y$ splits as an isometric product, $\mathbb{R}\times X$ for some metric space $X$.
\end{theorem}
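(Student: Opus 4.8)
The plan is to deduce this from the quantitative almost-splitting theorem just proved, following the Cheeger--Colding scheme \cite{ChCo1}, the one new ingredient being a scaling reduction that keeps hypotheses \eqref{basichypo1}--\eqref{basichypo2} under control as $\lam_i,K_i\to 0$. First I would record how these hypotheses transform under a constant rescaling $\hat g=\lambda^2 g$: taking $\hat V=\lambda^{-2}V$ one has $\Ric_{\hat g}+\tfrac12\mathcal{L}_{\hat V}\hat g=\Ric_g+\tfrac12\mathcal{L}_V g$, so the lower bound $-\lam$ becomes $-\lam\lambda^{-2}$; moreover $|\hat V|_{\hat g}\le\lambda^{\a-1}K\,d_{\hat g}(\cdot,O)^{-\a}$ and $\vol_{\hat g}(B_{\hat g}(y,1))=\lambda^n\vol_g(B_g(y,\lambda^{-1}))$, the latter bounded below uniformly by the volume comparison of Theorem \ref{thm volume element comparison}. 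Hence, given any $T>0$, choosing $\lambda=1/T$ the rescaled sequence $(\M_i,T^{-2}g_i)$ still satisfies \eqref{basichypo1}--\eqref{basichypo2} with new constants $\lam_iT^2\to 0$, $K_iT^{1-\a}\to 0$ (here $0\le\a<1$ is used) and a uniform $\hat\rho$, and it converges in the Gromov--Hausdorff sense to $(Y,T^{-1}d)$, which again contains a line. So it suffices to prove, after such a rescaling, that every metric ball about a point of the line is isometric to a ball in a product $\mathbb{R}\times X$.

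Fix $T$ and work in the rescaled picture; write $\gamma$ for the line in $(Y,T^{-1}d)$ and set $R=\tfrac14$. Using Gromov--Hausdorff approximations choose $x_i\in\M_i$ with $x_i\to\gamma(0)$ and $q_{i,\pm}\in\M_i$ converging to $\gamma(\pm L)$ for some fixed large $L$; then $d_i(x_i,q_{i,\pm})\to L$ and $d_i(q_{i,+},q_{i,-})\to 2L$, so the excess $e_i(x_i)=d_i(x_i,q_{i,+})+d_i(x_i,q_{i,-})-d_i(q_{i,+},q_{i,-})\to 0$. Thus for $i$ large, $e_i(x_i)\le\eps_iR$ with $\eps_i\to 0$, while $d_i(x_i,q_{i,\pm})\le 2L=O(\lam_i^{-1/2})$ because $\lam_i^{-1/2}\to\infty$; and $R\le r_0(n,\lam_i,K_i,\a,\rho)$ for $i$ large since the smallness conditions defining $r_0$ in the earlier sections (all of the form ``$\lam r_0^2+Kr_0^{1-\a}$ small'') hold at $r_0=\tfrac14$ once $\lam_i,K_i$ are small. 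Hence \eqref{basichypo1}, \eqref{basichypo2}, \eqref{excess assumption 1}, \eqref{excess assumption 2} all hold for $(\M_i,T^{-2}g_i)$, and the preceding theorem yields metric spaces $X_i$ with $d_{GH}\!\left(B(x_i,\tfrac{1}{4}R),\,B((0,\tilde x_i),\tfrac{1}{4}R)\right)\le\Psi_i$, where $B((0,\tilde x_i),\tfrac{1}{4}R)\subseteq\mathbb{R}\times X_i$ carries the product metric and $\Psi_i=\Psi(\lam_i,K_i,\eps_i\,|\,n,R,\a,\rho)\to 0$.

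Next I would pass to the limit. On the one hand $B(x_i,\tfrac{1}{4}R)\xrightarrow{d_{GH}}B_{T^{-1}d}(\gamma(0),\tfrac{1}{4}R)\subseteq(Y,T^{-1}d)$. On the other hand, the $X_i$ are uniformly bounded and noncollapsed --- being product factors of the doubling, noncollapsed $\M_i$ --- so after passing to a subsequence $X_i\to X_T$ in the Gromov--Hausdorff topology and $B((0,\tilde x_i),\tfrac{1}{4}R)\to B((0,\tilde x),\tfrac{1}{4}R)\subseteq\mathbb{R}\times X_T$. Since $\Psi_i\to 0$, the triangle inequality for $d_{GH}$ forces $B_{T^{-1}d}(\gamma(0),\tfrac{1}{4}R)$ to be isometric to a ball in $\mathbb{R}\times X_T$ with the product metric, the $\mathbb{R}$-factor being the limit of the approximating almost-split directions, which one identifies with the relevant sub-segment of $\gamma$. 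Undoing the rescaling shows that for every $T>0$ the ball $B_d(\gamma(0),T/16)$ is isometric to a ball in a product $\mathbb{R}\times(T\!\cdot\!X_T)$ with $\gamma$ along the $\mathbb{R}$-factor.

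Finally one must patch these local splittings into a global isometry $Y\cong\mathbb{R}\times X$; this is the standard concluding step of \cite{ChCo1} and is where I expect the only real work to lie. Concretely, let $b_\pm$ be the Busemann functions of the two rays of $\gamma$ (limits of the functions $b_\pm$ of the excerpt); from the vanishing of the excess at the base point together with the excess estimate of Theorem \ref{excess} one gets $b_++b_-\equiv0$ on $Y$, so $b:=b_+$ is $1$-Lipschitz with ``$|\nabla b|\equiv 1$'' and affine along minimizing geodesics, and --- by the harmonic approximation of Lemma \ref{harmonic approximation} passed to the limit --- the gradient flow of $b$ defines an isometry $\mathbb{R}\times b^{-1}(0)\to Y$; one takes $X=b^{-1}(0)$. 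Equivalently, the spaces $T\!\cdot\!X_T$ are consistently nested and $X=\varinjlim_{T\to\infty}(T\!\cdot\!X_T)$. The hard part is the bookkeeping in this patching --- verifying that the splitting direction and the transverse fibers agree across all scales --- everything else being a transcription of \cite{ChCo1} now legitimate because the volume and Laplacian comparison, Sobolev and cut-off tools, excess estimate and harmonic approximation have all been established under \eqref{basichypo1}--\eqref{basichypo2}.
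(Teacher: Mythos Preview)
Your proposal is correct and follows precisely the route the paper indicates: the paper gives no detailed proof, merely stating that the result ``amounts to'' the preceding quantitative theorem ``as in Theorem 6.64 \cite{ChCo1}'', and your argument is a faithful fleshing-out of that standard Cheeger--Colding scheme (rescaling to access balls of all radii, verifying the excess hypotheses along approximants to the line, and patching via Busemann functions). One minor correction: the rescaled noncollapsing $\vol_{\hat g}(B_{\hat g}(y,1))\ge T^{-n}\rho$ follows simply from the containment $B_g(y,1)\subset B_g(y,T)$ rather than from the volume comparison theorem (which goes the other way), but this is inconsequential.
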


Finally, Lemma \ref{harmonic approximation} also yields, according to \cite{Co} (see also \cite{ChCo1}), the existence of a so called $\e$-splitting map (see also Definition 1.6 and Lemma 1.7 in \cite{ChNa}).

\begin{definition}
\lab{defhsplit}
A map $h=(h_1, h_2,\cdots, h_k):\ B(x,r)\rightarrow \mathbb{R}^k$ is an $\e$-splitting map, if\\
(1) Each $h_i$, $i=1,\cdots,k$, is a harmonic function;\\
(2) $|\d h|\leq 1+\e$;\\
(3) $\displaystyle \oint_{B(x,r)}\left|<\d h_i, \d h_j>-\delta_{ij}\right|^2\leq \e^2$, $\forall i,j$;\\
(4) $\displaystyle r^2\oint_{B(x,r)}|\d^2 h_i|\leq \e^2$, $\forall i$.
\end{definition}

\begin{lemma}\label{lem coordinate approximation}
Suppose that \eqref{basichypo1} and $\eqref{basichypo2}$ are satisfied, and
\[
d_{GH}(B(x,\lam^{-1}), \underline{B}(0,\lam^{-1}))\leq \e.
\]
Then for any $R\leq 1$, there exists a $\Psi$-splitting map $h=(h_1, h_2, \cdots, h_n):\ B(x,R)\rightarrow \mathbb{R}^n$ for some $\Psi=\Psi(\lam, K,\e| n,R,\a,\rho)$. Moreover, in the Gromov-Hausdorff sence, $h_i$ and $\pi_i$ are $\Psi$ close.

Here $\underline{B}(0,L)$ denotes the ball centered at the origin with radius $L$ in $\mathbb{R}^n$, and $\pi_i$ is the $i$-th coordinate function on $\mathbb{R}^n$.
\end{lemma}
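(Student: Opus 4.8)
The plan is to follow the construction of \cite{Co} (see also Section~6 of \cite{ChCo1}), feeding Lemma~\ref{harmonic approximation} into it in place of the classical harmonic approximation, and substituting the volume and Laplacian comparison (Theorem~\ref{thm volume element comparison}, Proposition~\ref{prop laplacian comparison}), the segment inequality (Theorem~\ref{thsegineq}) and Sobolev inequality (Corollary~\ref{sobolev}), the gradient and mean-value estimates (Theorem~\ref{thmpoisgrad}), the cut-off function (Lemma~\ref{lem cutoff}), and the $L^q$ bound on $V$ (Lemma~\ref{lemma Lq bound of V}) for the usual consequences of a pointwise Ricci lower bound.

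First I would fix a Gromov--Hausdorff $\e$-approximation $\Phi$ from $\underline{B}(0,\lam^{-1})$ to $B(x,\lam^{-1})$ with $d(\Phi(0),x)\le\e$, and set $q_i^{\pm}=\Phi(\pm L e_i)$ for $1\le i\le n$, where $L$ is comparable to $\lam^{-1/2}$ and $\le\lam^{-1}$. By the Euclidean identity $|Le_i-(-Le_i)|=|Le_i|+|-Le_i|$ and the definition of an $\e$-approximation, the far points satisfy $d(x,q_i^{\pm})=O(\lam^{-1/2})$ and the excess $e_i(x)=d(x,q_i^+)+d(x,q_i^-)-d(q_i^+,q_i^-)\le 3\e$; I would also pick the ``diagonal'' points $q^{\pm}_{ij}=\Phi\bigl(\pm L(e_i\pm e_j)/\sqrt2\bigr)$, which satisfy the same two estimates. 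All of these meet the hypotheses \eqref{excess assumption 1} and \eqref{excess assumption 2} of Lemma~\ref{harmonic approximation}, which I would apply on $B(x,CR)$ (for a suitable fixed $C$, with $CR\le r_0$) to produce harmonic functions $h_i$ (and $h^{\pm}_{ij}$) with $h_i|_{\partial B(x,CR)}=b_{q_i^+}$ such that, on $B(x,3R)$,
\[
|h_i-b_{q_i^+}|\le\Psi R,\qquad \oint|\nabla h_i-\nabla b_{q_i^+}|^2\le\Psi,\qquad \oint|\nabla^2 h_i|^2\le\Psi R^{-2}.
\]
The candidate splitting map is then $h=(h_1,\dots,h_n)$ restricted to $B(x,R)$.

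Next I would verify the four properties of Definition~\ref{defhsplit}. Property~(1) is built in; property~(4) follows from $R^2\oint|\nabla^2 h_i|\le R^2(\oint|\nabla^2 h_i|^2)^{1/2}\le\Psi^{1/2}R$. For property~(2) I would use $\Delta|\nabla h_i|^2=2|\nabla^2 h_i|^2+2\Ric(\nabla h_i,\nabla h_i)$ (since $\Delta h_i=0$), which by \eqref{riccond} gives $\Delta|\nabla h_i|^2\ge 2|\nabla^2 h_i|^2-2\lam|\nabla h_i|^2-2\langle\nabla_{\nabla h_i}V,\nabla h_i\rangle$; after putting the last term in divergence form via an integration by parts (again using $\Delta h_i=0$) and absorbing the resulting $|\nabla^2 h_i|$-terms into the favorable term, a Moser iteration as in the proof of Theorem~\ref{thmpoisgrad} (Appendix~I) applies, the relevant potential $\sim|V|^2+R^{-1}|V|+\lam$ lying in $L^{q_0}$ for some $q_0>n/2$ exactly because $\a<1$ (Lemma~\ref{lemma Lq bound of V}). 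Since the Hessian bound gives $\oint\bigl|\nabla|\nabla h_i|^2\bigr|\le 2(1+\Psi)\oint|\nabla^2 h_i|\le\Psi R^{-1}$, the Poincar\'e inequality (via the segment inequality) shows $|\nabla h_i|^2$ is $L^1$-close to its average $\le 1+\Psi$ on $B(x,R)$; feeding this into the mean-value inequality for the subsolution $(|\nabla h_i|^2-\overline{|\nabla h_i|^2})_+$ then yields $\sup_{B(x,R)}|\nabla h_i|^2\le 1+\Psi$. For property~(3) I would polarize, $\langle\nabla h_i,\nabla h_j\rangle=\tfrac14(|\nabla(h_i+h_j)|^2-|\nabla(h_i-h_j)|^2)$: since $b_{q_i^+}\pm b_{q_j^+}$ agrees on $B(x,CR)$ with $\sqrt2\,b_{q^{\pm}_{ij}}$ up to an additive constant and error $\Psi R$, a Caccioppoli estimate on $h_i\pm h_j-\sqrt2\,h^{\pm}_{ij}$ (with the cut-off function of Lemma~\ref{lem cutoff}) together with $\oint|\nabla h^{\pm}_{ij}|^2=1+\Psi$ gives $\oint_{B(x,R)}|\nabla(h_i\pm h_j)|^2=2+\Psi$, hence $\oint_{B(x,R)}\langle\nabla h_i,\nabla h_j\rangle=\delta_{ij}+\Psi$; combining the Poincar\'e inequality for $\langle\nabla h_i,\nabla h_j\rangle$ with the pointwise bound $|\langle\nabla h_i,\nabla h_j\rangle|\le(1+\Psi)^2$ from~(2) and an $L^\infty$--$L^1$ interpolation upgrades this to the $L^2$-bound in~(3). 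The Gromov--Hausdorff $\Psi$-closeness of $h_i$ and $\pi_i$ finally follows from $|h_i-b_{q_i^+}|\le\Psi R$ and the fact that $b_{q_i^+}\circ\Phi$ equals $-\pi_i$ up to a constant and error $\Psi$.

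The main obstacle will be property~(2). Unlike the classical case, $|\nabla h_i|^2$ is not a subsolution of a clean equation, because $\tfrac12\mathcal{L}_Vg(\nabla h_i,\nabla h_i)=\langle\nabla_{\nabla h_i}V,\nabla h_i\rangle$ involves $\nabla V$, which is not controlled pointwise; the remedy is to integrate this term by parts so that only $|V|$ (and $|\nabla^2 h_i|$, which is absorbed) appears, after which the Moser iteration closes solely because $|V|^2\in L^{q_0}$ with $q_0>n/2$ --- i.e. solely because the singularity order $\a$ of $V$ is strictly less than $1$. A secondary subtlety is that this argument gives only a bound $\sup|\nabla h_i|^2\le C$, not the sharp $1+\Psi$; recovering the sharp constant requires additionally the Hessian estimate of Lemma~\ref{harmonic approximation} to control the oscillation of $|\nabla h_i|^2$ through the segment/Poincar\'e inequality.
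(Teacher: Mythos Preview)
Your proposal is correct and follows the same route as the paper, which does not give a self-contained proof but simply states (in the remark after the lemma) that one takes a $2\e$-isometry $f:\underline{B}(0,\lam^{-1})\to B(x,\lam^{-1})$, forms Busemann-type functions from the images of far points, replaces them by harmonic functions with the same boundary values, and then invokes Lemma~\ref{harmonic approximation} to verify the conditions of Definition~\ref{defhsplit}. Your write-up fills in considerably more detail than the paper does---in particular the verification of property~(2), the sharp pointwise bound $|\nabla h_i|\le 1+\Psi$, which the paper does not address explicitly (it only records the crude bound $|\nabla h_+|\le C$ in the proof of Lemma~\ref{harmonic approximation}(3)); your handling of the $\mathcal{L}_Vg$ term by integration by parts is exactly the device used in Appendix~I and in the proof of Lemma~\ref{harmonic approximation}(3).
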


\begin{remark} Here we recall the idea of the proof of the lemma in the papers cited above.
If $d_{GH}(B(x,\lam^{-1}), \underline{B}(0,\lam^{-1}))\leq \e$, then there exists a $2\e$ isometry $f:\ \underline{B}(0,\lam^{-1})\rightarrow B(x,\lam^{-1})$. Thus, for any $y\in B(x,\lam^{-1})$, there is a point $\underline{y}\in \underline{B}(0,\lam^{-1})$ so that $d(f(\underline{y}),y)\leq 2\e$. From this $f$, one can easily show that a map formed by
certain Buseman functions is a $4\e$ isometry. Then one can use harmonic functions with the same boundary value on a ball as the Buseman functions to form a $6 \e$ isometry.  Then Lemma \ref{harmonic approximation} shows  this harmonic map satisfies
the conditions in  definition \ref{defhsplit}. Hence it is an $\e$ splitting.

In the above lemma, $h_i$ and $\pi_i$ being $\Psi$ close in Gromov-Hausdorff sense means that for any $y\in B(x,\lam^{-1})$, we have $\left|h_i(y)-\pi_i(\underline{y})\right|\leq \Psi$.
\end{remark}

\begin{remark}   If $\a=0$ in condition  \eqref{basichypo1}, then the noncollapsing condition \eqref{basichypo2} is not necessary
for all conclusions in this section.
\end{remark}

\medskip
\section{Main theorems:  volume continuity, metric cone,  size of singular set}

In this section, we present the volume convergence and volume continuity theorems and prove that any tangent cone of a Gromov-Hausdorff limit under conditions \eqref{basichypo1} and \eqref{basichypo2} is a metric cone.

First, we state the volume convergence and volume continuity theorems. Under the assumption that Ricci curvature bounded from below, the volume continuity was first proved by Colding \cite{Co}. Here we adapt a similar argument as in Theorems 9.31 and 9.40 in \cite{Ch}, which essentially uses Lemma \ref{lem coordinate approximation} and the segment inequality.  With the preparation in the
previous section, the remaining proof now is the same as in \cite{Co} (See also Theorems 9.31 and 9.40 in \cite{Ch}).

\begin{theorem}[Volume convergence]
Suppose that \eqref{basichypo1} and $\eqref{basichypo2}$ are satisfied, and
\[
d_{GH}(B(x,R), \underline{B}(0,R))\leq \e,
\]
for some $R\leq 1$ and sufficiently small $\e>0$. Then for some $\Psi=\Psi(\lam,K,\e | n,R,\a,\rho)$ we have
\[
\vol(B(x,R))\geq(1-\Psi)\vol(\underline{B}(0,R)).
\]
Here $\underline{B}(0,R)$ is the ball in $\mathbb{R}^n$.
\end{theorem}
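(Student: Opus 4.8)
The plan is to follow the strategy of Colding's volume convergence theorem as presented in Theorems 9.31 and 9.40 of \cite{Ch}, using the $\Psi$-splitting map from Lemma \ref{lem coordinate approximation} as the main new input, since all the classical comparison ingredients (volume comparison Theorem \ref{thm volume element comparison}, segment inequality Theorem \ref{thsegineq}, gradient and Hessian bounds for harmonic functions from Lemma \ref{harmonic approximation} and Theorem \ref{thmpoisgrad}) have already been established under \eqref{basichypo1} and \eqref{basichypo2}. First I would invoke Lemma \ref{lem coordinate approximation} to produce a harmonic map $h=(h_1,\dots,h_n):B(x,R)\to\mathbb{R}^n$ which is a $\Psi$-splitting map and which is $\Psi$-close to the Gromov-Hausdorff identification with $\underline{B}(0,R)\subset\mathbb{R}^n$; in particular $h(B(x,R))$ essentially covers $\underline{B}(0,(1-\Psi)R)$.

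The core estimate is to show that $h$ is, up to a set of small measure, an almost-isometry onto its image, so that the co-area formula / change of variables gives $\vol(B(x,R))\geq (1-\Psi)\vol(\underline{B}(0,R))$. Concretely, I would consider the Jacobian $Jh=\sqrt{\det(\langle\d h_i,\d h_j\rangle)}$. Properties (2) and (3) in Definition \ref{defhsplit} say $|\d h|\le 1+\e$ and $\oint_{B(x,R)}|\langle\d h_i,\d h_j\rangle-\delta_{ij}|^2\le\e^2$, so on all of $B(x,R)$ except a set of measure $\Psi\vol(B(x,R))$ the Gram matrix is within $\Psi$ of the identity, hence $Jh\ge 1-\Psi$ there. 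By the area formula $\int_{B(x,R)} Jh\,dg = \int_{\mathbb{R}^n} \#(h^{-1}(z))\,dz \ge \vol(h(B(x,R)))$, but this gives the wrong inequality direction; instead one runs the argument the other way, bounding the multiplicity. The standard device (Cheeger's book, proof of 9.31) is to use the Hessian bound (4), $R^2\oint_{B(x,R)}|\d^2 h_i|\le\e^2$, together with the segment inequality Theorem \ref{thsegineq} applied to $\xi=\sum_i|\d^2 h_i|$ to conclude that along most pairs of points the map $h$ almost preserves distances: $|h(y_1)-h(y_2)|\ge (1-\Psi)d(y_1,y_2)$ for $(y_1,y_2)$ outside a set of small measure in $B(x,R)\times B(x,R)$. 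This forces $h$ to be almost injective in a quantitative $L^1$ sense, so that $\#(h^{-1}(z))\le 1+\Psi$ for a.e.\ $z$ in $h(B(x,R))$, and then $\vol(B(x,R))=\int Jh^{-1}\,d(h_*g)$-type bookkeeping yields $\vol(B(x,R))\ge(1-\Psi)\vol(h(B(x,R)))\ge(1-\Psi)\vol(\underline{B}(0,(1-\Psi)R))\ge(1-\Psi)\vol(\underline{B}(0,R))$, after absorbing the $(1-\Psi)^n$ into a new $\Psi$.

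The step I expect to be the main obstacle is the almost-injectivity of $h$: controlling the multiplicity $\#(h^{-1}(z))$ requires combining the Hessian smallness from Lemma \ref{harmonic approximation}(3) with the segment inequality in exactly the form of Theorem \ref{thsegineq}, and checking that the curvature-term modifications (the vector field $V$ contributing $\frac{C(\a)K}{s^\a}+|V|$ rather than a bounded Ricci lower bound) do not spoil the bound. Because Theorem \ref{thsegineq} already holds under \eqref{basichypo1} alone with a constant $C(n,\lam,K,\a)$ independent of the collapsing scale, and the $L^q$ control of $|V|$ from Lemma \ref{lemma Lq bound of V} lets one absorb the vector-field terms into $\Psi$, this goes through essentially verbatim; but it is the place where one must be careful that all error terms are genuinely of the form $\Psi(\lam,K,\e\mid n,R,\a,\rho)$. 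Since, as noted in the text, "the remaining proof now is the same as in \cite{Co}," I would present the argument by reducing to that reference once the splitting map and segment inequality are in hand, and only spell out the passages where the bound on $V$ enters the error estimates. Finally, if $\a=0$ the non-collapsing hypothesis \eqref{basichypo2} can be dropped throughout, consistently with the remarks in the previous section.
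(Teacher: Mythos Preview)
Your proposal is correct and follows exactly the approach the paper takes: the paper itself does not give a detailed proof but states that, with Lemma \ref{lem coordinate approximation} (the $\Psi$-splitting map) and the segment inequality Theorem \ref{thsegineq} in hand, ``the remaining proof now is the same as in \cite{Co} (See also Theorems 9.31 and 9.40 in \cite{Ch}).'' Your sketch of that deferred argument---producing the harmonic almost-splitting map, using the Hessian smallness together with the segment inequality to get almost-distance preservation and hence control of multiplicity, and then reading off the volume lower bound---is precisely the Colding/Cheeger argument being invoked, with the vector-field error terms already absorbed into the earlier lemmas as you note.
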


The above theorem implies the volume continuity under Gromov-Hausdorff convergence.

\begin{theorem}[Volume continuity]
For a sequence of Riemannian manifolds $\{(\M_i,g_i)\}$, assume that
\[
Ric_{\M_i}+\frac{1}{2}\mathcal{L}_{V_i}g_{\M_i}\geq - \lam g_{\M_i},\ |V_i|(y_i)\leq \frac{K}{d^\a_{i}(y_i,O_i)},\ and\ \vol(B(y_i,1))\geq \rho,
\]
where $O_i$ is a fixed point in $\M_i$ and $y_i$ is an arbitrary point in $\M_i$.

Let $(\M,g)$ be a Riemannian manifold. If $(\M_i,d_i,x_i)\xrightarrow{d_{GH}}(\M,d,x)$ in pointed Gromov-Hausdorff topology, then for all $r<\infty$,
\[
\vol(B(x_i,r))=\vol(B(x,r)).\\
\]
\end{theorem}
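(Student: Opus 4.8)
The plan is to deduce the global statement $\vol(B(x_i,r)) = \vol(B(x,r))$ for all finite $r$ from the local volume convergence theorem just established, by a covering-and-exhaustion argument analogous to Theorems 9.31 and 9.40 in \cite{Ch} and the original approach of Colding \cite{Co}. First I would reduce matters to showing the continuity for \emph{small} balls, that is, for balls of radius $R \le r_0$ where $r_0 = r_0(n,\lam,K,\a,\rho)$ is the threshold from the previous section. Indeed, once we know $\vol(B(z_i, R)) \to \vol(B(z,R))$ for all small $R$ and all converging sequences of centers $z_i \to z$, a ball $B(x_i, r)$ of arbitrary finite radius can be written (up to controlled overlaps) as a finite union of small balls whose number is uniformly bounded by the volume comparison Theorem \ref{thm volume element comparison} and the noncollapsing hypothesis \eqref{basichypo2}; a Vitali-type covering argument then upgrades the local statement to the global one, since the error terms in Theorem \ref{thm volume element comparison} become negligible as the radii of the small balls shrink.

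For the small-ball case, the key input is the local volume convergence theorem together with Lemma \ref{lem coordinate approximation} providing $\Psi$-splitting maps. The upper semicontinuity $\limsup_i \vol(B(x_i,R)) \le \vol(B(x,R))$ I would obtain from the volume non-inflation bound \eqref{volume noninflation} in Theorem \ref{thm volume element comparison}(a), which holds uniformly along the sequence, combined with lower semicontinuity of volume of open sets under Gromov-Hausdorff convergence (more precisely, one covers $B(x,R)$ from outside by slightly larger balls in $\M_i$ and uses the uniform volume bound). The lower semicontinuity $\liminf_i \vol(B(x_i,R)) \ge \vol(B(x,R))$ is the substantive direction: here one pulls back the Euclidean volume form under the $\Psi$-splitting map $h:\ B(x_i, R') \to \mathbb{R}^n$ produced by Lemma \ref{lem coordinate approximation} (after rescaling so that a large ball around $x_i$ is Gromov-Hausdorff close to a Euclidean ball — note $(\M_i,d_i,x_i)$ converges to the \emph{smooth} manifold $(\M,d,x)$, so small balls are almost Euclidean at small scales via the segment inequality and harmonic approximation), and uses the Hessian bound (4) in Definition \ref{defhsplit} together with the segment inequality Theorem \ref{thsegineq} to show $h$ is almost volume-nondecreasing. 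This is exactly the mechanism in \cite{Co} and Theorem 9.31 of \cite{Ch}, and with the preparations of the previous section (existence of $\e$-splitting maps, segment inequality, volume comparison) it goes through verbatim.

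The hard part — and the reason the proof is merely sketched rather than reproduced — is the construction and control of the $\e$-splitting map at the correct scale in the limiting argument: one must choose, for each $i$, a radius $R_i'$ and a rescaling so that $B(x_i, R_i')$ is $\e_i$-close in Gromov-Hausdorff distance to a Euclidean ball with $\e_i \to 0$, invoke Lemma \ref{lem coordinate approximation} to get a $\Psi_i$-splitting map with $\Psi_i \to 0$, and then verify that the pushforward of the volume under this map converges to Lebesgue measure. The delicate point is that the error function $\Psi = \Psi(\lam, K, \e \mid n, R, \a, \rho)$ depends on $R$, so one cannot let $R \to 0$ and $\e \to 0$ independently; the diagonal argument requires the uniform structure provided by \eqref{basichypo1}–\eqref{basichypo2}. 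Everything else — the covering argument, the passage from small to large radii, the two semicontinuity estimates — is routine given the tools assembled above, and the identical proofs in \cite{Co} and \cite{Ch} apply with only the replacement of Bishop-Gromov comparison by Theorem \ref{thm volume element comparison} and of the Abresch-Gromoll estimate by Theorem \ref{excess}.
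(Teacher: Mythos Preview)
Your proposal is correct and follows essentially the same route as the paper: the paper gives no detailed proof here, stating only that with the preparations of the previous section (the $\e$-splitting maps of Lemma \ref{lem coordinate approximation}, the segment inequality, and the volume comparison Theorem \ref{thm volume element comparison}) the argument of Colding \cite{Co} and Theorems 9.31, 9.40 of \cite{Ch} goes through verbatim, and that volume continuity is then a consequence of the volume convergence theorem. Your sketch is a faithful elaboration of exactly this strategy.
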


Next, we show a cone rigidity theorem in the spirit of \cite{ChCo1}. Although our proof follows the idea there, certain modifications and extensions in the choice of the axillary functions are needed.

For a metric space $(Z,d)$, denote by $(C(Z),\hat{d})$ the metric cone on $Z$. That is, $C(Z)=(0,\infty)\times Z$ with metric:
\[
\hat{d}\left((r_1, z_1), (r_2, z_2)\right)=\left\{\begin{array}{cc} r_1^2+r_2^2-2r_1r_2\cos(d(z_1,z_2)),&\ if\ d(z_1,z_2)\leq \pi,\\
r_1+r_2,&\ if\ d(z_1,z_2)>\pi.\end{array}\right.
\]

In the following, unless otherwise specified, $C_i$ will denote constants depending on $n,\lam,K,\a,$ and   $\rho$.

\begin{theorem}[Cone rigidity]\label{thm cone rigidity}
Suppose that \eqref{basichypo1} and \eqref{basichypo2} are satisfied. There exists a $r_0=r_0(n,\lam,K,\a,\rho)$ such that if for $R\leq r_0$ and $\delta>0$,
\be\label{volume condition}
(1-\delta)\vol(B(x,R))\leq \frac{R}{n}\vol(\pa B(x,R)),
\ee
then for some $\Psi=\Psi(\lam,K,\delta|n,R,\a,\rho)$, we have
\be\label{GH close}
d_{GH}(B(x,R), C_{0,R}(Z))\leq \Psi,
\ee
where $Z$ is a length space with
\be\label{diam Z}
diam(Z)\leq \pi+\Psi.
\ee
\end{theorem}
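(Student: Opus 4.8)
The plan is to run the ``volume cone implies metric cone'' argument of Cheeger and Colding \cite{ChCo1}, with the comparison function and the Bochner computation modified to accommodate the Ricci vector field condition \eqref{basichypo1}. Throughout write $r(\cdot)=d(x,\cdot)$. \emph{Step 1.} First I would convert the volume hypothesis \eqref{volume condition} into the integral statement that the normalized volume element $w(s,\theta)/s^{n-1}$ is almost independent of $s$ on $(0,R]$: expanding $\tfrac{R}{n}\vol(\pa B(x,R))-\vol(B(x,R))=\int_{S^{n-1}}\int_0^R s^{n-1}\bigl[\tfrac{w(R,\theta)}{R^{n-1}}-\tfrac{w(s,\theta)}{s^{n-1}}\bigr]\,ds\,d\theta$ and combining \eqref{volume condition} with the monotonicity factor $e^{C(\a)Ks^{1-\a}+\lam s^2}=1+\Psi$ of Theorem \ref{thm volume element comparison}(a) gives $\int_{S^{n-1}}\int_0^R s^{n-1}\bigl|\tfrac{w(s,\theta)}{s^{n-1}}-\tfrac{w(R,\theta)}{R^{n-1}}\bigr|\,ds\,d\theta\le\Psi\,\vol(B(x,R))$. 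Feeding this into Lemma \ref{proposition differential of volume ratio} together with the Laplacian comparison Proposition \ref{prop laplacian comparison} — whose only defect terms $\tfrac{\lam}{3}s$, $\tfrac{C(\a)K}{s^\a}$ and $\langle V,\d r\rangle$ are all $L^1$-small on $B(x,R)$ by Lemma \ref{lemma Lq bound of V} and $R\le r_0$ — yields
\[
\oint_{B(x,R)}\bigl|\De r-\tfrac{n-1}{r}-\langle V,\d r\rangle\bigr|\,dg\le\Psi R^{-1},
\]
i.e. the radial Laplacian is almost cone-like.

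\emph{Step 2.} Next, set $f=\tfrac12 r^2$ (if needed replaced by the solution of $\De f=n$ in $B(x,R)$ with boundary value $\tfrac12R^2$, or by $\tfrac12 r^2$ minus a radial primitive of $\langle V,\pa_r\rangle$ — this is the point where, as in \cite{ChCo1}, the auxiliary function must be chosen with care). Since $\De(\tfrac12 r^2)=1+r\De r$, Step 1 gives $\oint_{B(x,R)}\bigl|\De f-\langle V,\d f\rangle-n\bigr|\,dg\le\Psi$. I would then invoke the vector field Bochner identity
\[
\tfrac12(\De-V\!\cdot\!\d)|\d f|^2=|\d^2 f|^2+\bigl\langle\d(\De-V\!\cdot\!\d)f,\d f\bigr\rangle+\bigl(\Ric+\tfrac12\mc{L}_V g\bigr)(\d f,\d f),
\]
into which the hypothesis $\Ric+\tfrac12\mc{L}_V g\ge-\lam g$ enters directly. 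Integrating against the cut-off $\phi$ of Lemma \ref{lem cutoff} with $\supp\phi\subset B(x,2R)$ and $\phi\equiv1$ on $B(x,R)$, and handling every $V$-dependent term by integrating by parts in the radial variable via the key identity $\mc{L}_V g(\pa_r,\pa_r)=2\pa_r\langle V,\pa_r\rangle$, one reduces all such terms either to quantities linear in $|V|$ — bounded by $\Psi$ through Lemma \ref{lemma Lq bound of V} and the smallness of $KR^{1-\a}$ — or to expressions absorbed by the $L^1$ control of $\De r-\tfrac{n-1}{r}$ from Step 1. Together with $\lam R^2\le\Psi$ this produces the integral Hessian estimate $\oint_{B(x,\frac12R)}|\d^2 f-g|^2\,dg\le\Psi$.

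\emph{Step 3.} Finally, with the Hessian estimate in hand, I would follow \cite{ChCo1} (see also \cite{Co} and Theorems 9.31, 9.40 in \cite{Ch}) and combine it with the segment inequality Theorem \ref{thsegineq} and the $\e$-splitting and almost splitting results of Section 5: one builds the cross-section $Z$ as a level set $\{f=\tfrac12 t^2\}$ with the induced metric rescaled by $t^{-2}$, shows via the Hessian bound that the flow of $\d f$ is almost a homothety between nearby level sets, and uses the segment inequality to promote the resulting $L^2$ closeness to a genuine Gromov--Hausdorff estimate $d_{GH}(B(x,R),C_{0,R}(Z))\le\Psi$. The bound $\diam(Z)\le\pi+\Psi$ follows by restricting the Laplacian comparison to the cross-section, which in the almost warped-product picture forces an almost lower bound $\Ric_Z\ge(n-2)(1-\Psi)$ and hence a Bonnet--Myers diameter bound; equivalently $Z$ is $\Psi$-close to a subset of the unit sphere $S^{n-1}$.

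\emph{The main obstacle} is Step 2. In the classical setting $V\equiv0$ and the Bochner term $\mc{L}_V g$ is absent; here $|V|$ is only controlled in $L^q$ with $q$ possibly close to $n$, so a priori the products $|\d^2 f|\,|V|$ and $(\De r-\tfrac{n-1}{r})\,|V|$ that appear after one integration by parts need not be integrable. The resolution — and the reason the radial structure is indispensable — is the systematic use of $\mc{L}_V g(\pa_r,\pa_r)=2\pa_r\langle V,\pa_r\rangle$ to integrate by parts once more and trade each such product for a term linear in $|V|$, exploiting the gain $KR^{1-\a}\to0$ as $R\to0$ coming from $\a<1$. A lesser technical point is the passage in Step 3 from the $L^2$ Hessian bound to the Gromov--Hausdorff estimate, which requires the segment inequality to average over the measure-zero cut locus; this is routine given the tools assembled in Sections 2--5.
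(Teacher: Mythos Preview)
Your road map — build an $f$ close to $r_x^2/2$, extract an $L^2$ Hessian estimate $|\d^2 f-\tfrac1n g|$ via Bochner, then run the Cheeger--Colding argument with the segment inequality — is exactly the paper's. But the mechanism you propose for Step 2 has a real gap. Your ``radial integration by parts'' via $\mc{L}_V g(\pa_r,\pa_r)=2\pa_r\langle V,\pa_r\rangle$ only applies when $\d f$ is literally $r\pa_r$; this fails for the harmonic replacement (which is not radial), while $f=\tfrac12 r^2$ is not smooth across the cut locus, so neither branch of your ``if needed'' clause cleanly supports that device. Likewise, the weighted Bochner identity you wrote, although correct, is not the tool the paper uses, precisely because solving $(\De-V\!\cdot\!\d)f=n$ would demand a maximum principle and Green's function bounds for the drift operator that are nowhere established.

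What the paper actually does is simpler and avoids the radial trick entirely. It solves the \emph{unweighted} equation $\De f=1$ in $B(x,R)$ with boundary value $R^2/(2n)$; the lower bound on $f$ comes from the Dirichlet Green's function via the heat kernel bounds of Theorem \ref{thmheatkernel}, and the upper bound $f\le r_x^2/(2n)+\Psi$ from the maximum principle Theorem \ref{thm maximum principle}. The volume hypothesis enters through the identity $\int_{B}(\De f-\De\tfrac{r_x^2}{2n})=\vol(B)-\tfrac{R}{n}\vol(\pa B)\le\delta\vol(B)$, which together with the pointwise lower bound $\De f-\De\tfrac{r_x^2}{2n}\ge-\tfrac{r_x}{n}\psi$ yields $\oint|\d(f-\tfrac{r_x^2}{2n})|^2\le\Psi$ and hence pointwise closeness via Poincar\'e and the gradient estimate. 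In the Bochner step the dangerous term is $-\oint\phi\,\d_iV_j f_if_j$; after a single integration by parts in $\d_i$ this becomes terms linear in $|V|$ plus $\oint\phi V_j f_i f_{ij}$, and the latter is absorbed \emph{not} by a second (radial) integration by parts but by Cauchy--Schwarz with weight $K^{\pm 1/2}$: one pays $K^{1/2}\oint\phi|\d^2 f|^2$ (absorbed into the left side since $K$ is small) and $K^{-1/2}\oint\phi|V|^2|\d f|^2$ (controlled by Lemma \ref{lemma Lq bound of V} and the gradient bound $|\d f|\le C$). That is the genuine resolution of the obstacle you identified.
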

\begin{proof}

$Step\ 1$.   We show that the metric is close to the Hessian of a function in average sense.

First, we derive bounds on $f$. In \cite{ChCo1} (see also \cite{Ch}), the lower bound of $f$ was obtained by constructing certain comparison function $\underline{L}_R$ and using the maximum principle on $f-\underline{L}_{3R}(r_y)$, where $y$ is an arbitrary point in $\M$ with $d(y,x)=2R$ and $r_y$ is the distance function from $y$. The same method can be applied to drive the lower bound of $f$ in this case as well.
 However, here we will use Dirichlet Green's function for simplicity.

For any open subset $\Omega$ in $\M$, denote by $\Gamma_{\Omega}(z,y)$ the Dirichlet Green's function in $\Omega$. That is,
\[
\Delta_x \Gamma_{\Omega}(z,y)=-\delta_y;\ \Gamma_{\Omega}(z,y)=\Gamma_{\Omega}(y,z);\ \Gamma_{\Omega}(\cdot,y)\huge|_{\pa \Omega}=0.
\]
Given $x\in \M$, let $f$ be the smooth solution of the following Dirichlet problem:
\[
\Delta f=1\ in\ B(x,R);\ f\huge|_{\pa B(x,R)}=\frac{R^2}{2n}.
\]
By using the Green's function, we can write $f$ as
\be\label{mean value f}
f(z)=\frac{R^2}{2n}-\int_{B(x,R)}\Gamma_{B(x,R)}(z,y)dg(y).
\ee
By the upper bound of the heat kernel in Theorem \ref{thmheatkernel}, we have
\be\label{bound Gamma}
\al
\Gamma_{B(x,R)}(z,y)&\leq  \int_0^{1}G(z,t;y,0)dt +\int_1^{\infty}C e^{-c t} dt\\
&\leq\int_0^{1}C_3t^{-\frac{n}{2}}e^{-\frac{d^2(z,y)}{C_4t}}dt + C
\leq C_5d^{-(n-2)}(z,y).
\eal
\ee Here we have used the large time exponential decay of the Dirichlet heat kernel on a ball.
Note that this decay can be obtained by proving that the $L^2$ norm decays exponentially first.
This needs the
help of the Poincar\'e inequality on a ball, which is true due the the heat kernel bound and gradient
estimate. Then one can apply the mean value inequality.
Therefore, \eqref{mean value f} and \eqref{bound Gamma} imply
\be\label{f lower bound}\al
f(z)\geq\frac{R^2}{2n}-\int_{B(x,R)}C_5d^{-(n-2)}(z,y)dg(y)
\geq\frac{R^2}{2n}-C(n)e^{C(\a)KR^{1-\a}+4\lam R^2}R^{2}.
\eal\ee
This gives a lower bound of $f$.

For an upper bound of $f$, let $r_x$ be the distance function from $x$. Then
\be\label{Delta f-U}
\al
\Delta f-\Delta\frac{r_x^2}{2n}= 1-\frac{r_x}{n}\Delta r_x-\frac{1}{n}
\geq -\frac{r_x}{n}\psi,
\eal
\ee
where $\psi=\left(\Delta r_x-\frac{n-1}{r_x}\right)_+$.
From Proposition \ref{prop laplacian comparison}, one has
\[
\psi\leq \frac{\lam}{3} r_x+|V|+\frac{C(\a)K}{r_x^\a}.
\]
Thus,
\[
r_x\psi \leq r_x\left(\frac{\lam}{3}r_x+|V|+\frac{C(\a)K}{r_x^\a}\right)\leq \lam R^2+R|V|+C(\a)KR^{1-\a},
\]
and \eqref{Delta f-U} becomes
\be\label{Delta f-U 1}
\Delta f-\Delta\frac{r_x^2}{2n}\geq -C(n)\left(\lam R^2+C(\a)KR^{1-\a}\right)-C(n)R|V|.
\ee
According to Theorem \ref{thm maximum principle}, we get
\be\label{f upper bound}
\al
f(z)\leq & \frac{r_x^2(z)}{2n}+C(n)R^2\left(\lam R^2+C(\a)KR^{1-\a}\right)+C(n)R^3||V||^*_{q,B(x,R)}\\
\leq & \frac{r_x^2(z)}{2n}+C(n,\lam,K,\a,\rho)\left(\lam R^4+KR^{3-\a}\right).
\eal
\ee

Next, we show that $f$ and $\frac{r_x^2}{2n}$ are close. By the volume condition \eqref{volume condition} in the theorem,
\be\label{integral upper bound Delta f-U}
\al
\int_{B(x,R)}\Delta f-\Delta \frac{r_x^2}{2n}=&\v(B(x,R))-\int_{\partial B(x,R)}<\d \frac{r_x^2}{2n}, \d r_x> dS\\
=& \v(B(x,R))-\frac{R}{n}\v(\pa B(x,R))\\
\leq & \delta \v(B(x,R)).
\eal
\ee
From \eqref{Delta f-U 1}, we can also get the lower bound
\be\label{integral lower bound Delta f-U}
\al
\oint_{B(x,R)}\Delta f-\Delta \frac{r_x^2}{2n}\geq& -\oint_{B(x,R)}C(n)\left(\lam R^2+C(\a)KR^{1-\a}\right)+C(n)R|V|\\
\geq& -C(n,\lam,K,\a,\rho)(\lam R^2+KR^{1-\a}).
\eal
\ee
Therefore, we have
\[-C(n,\lam,K,\a,\rho)(\lam R^2+KR^{1-\a})\leq \oint_{B(x,R)}\Delta f-\Delta \frac{r_x^2}{2n}\leq \delta.\]

Also, in \eqref{f upper bound}, setting $A:=C(n,\lam,K,\a,\rho)(\lam R^4+KR^{3-\a})$, the constant on the right hand side, yields
\[
\frac{r_x^2}{2n}-f+A\geq0.
\]
Combining the inequality above and \eqref{f lower bound}, \eqref{Delta f-U}, \eqref{integral upper bound Delta f-U} and Lemma \ref{lemma Lq bound of V}, we have
\[
\al
0\leq&\oint_{B(x,R)}\left[\Delta (f-\frac{r_x^2}{2n})+\frac{r_x}{n}\psi\right]\left[\frac{r_x^2}{2n}-f+A\right]\\
= &\oint_{B(x,R)} A\Delta (f-\frac{r_x^2}{2n})+(\frac{r_x^2}{2n}-f)\Delta (f-\frac{r_x^2}{2n})+\frac{r_x}{n}\psi\left[\frac{r_x^2}{2n}-f+A\right]\\
\leq& \delta A-\oint_{B(x,R)} |\d(f-\frac{r_x^2}{2n})|^2+\frac{1}{n}\left[C(n)e^{C(\a)KR^{1-\a}+4\lam R^2}R^{2}+A\right]\left(\lam R^2+C(\a)KR^{1-\a}\right).
\eal
\]
It follows that
\be\label{W2 bound U-f}
\al
\oint_{B(x,R)} |\d(f-\frac{r_x^2}{2n})|^2\leq&\delta A+\frac{1}{n}\left[C(n)e^{C(\a)KR^{1-\a}+4\lam R^2}R^{2}+A\right]\left(\lam R^2+C(\a)KR^{1-\a}\right)\\
=& \Psi(\delta,\lam,K|n,R,\a,\rho)
\eal
\ee
Therefore, by the Poincar\'e inequality, we have
\be\label{L2 bound U-f}
\oint_{B(x,R)}(\frac{r_x^2}{2n}-f)^2\leq \Psi(\delta,\lam,K|n,R,\a,\rho).
\ee
From \eqref{L2 bound U-f}, volume comparison theorem \eqref{thm volume element comparison}, the gradient estimate \eqref{thmpoisgrad}, and the bounds of $f$ (\eqref{f lower bound} and \eqref{f upper bound}), it is not hard to see that
\be\label{bound f-U}
\left|\frac{r_x^2}{2n}-f\right|\leq \Psi_1(\delta,\lam,K|n,R,\a,\rho), \ on\ B(x,(1-\Psi_1)R),
\ee
where $\Psi_1=\Psi^{1/(n+1)}$.

Finally, we show the closeness of $\d^2 f$ and the metric $g$ in the average sense. Note that, by Theorems \ref{thmpoisgrad} and \ref{thm maximum principle}, we have $|\d f|\leq C$ in $B(x,(1-\Psi_1)R)$. This together with \eqref{bound f-U} and \eqref{W2 bound U-f} gives
\be\label{L1 bound df-dU}
\oint_{B(x,(1-\Psi_1)R)}\left||\d f|^2-\frac{2f}{n}\right|\leq \Psi_1.
\ee
Now, let $\phi$ be the cut-off function as in Lemma \ref{lem cutoff} with $\phi\big|_{B(x,(1-\Psi_1)R)}=1$. From \eqref{L1 bound df-dU} and Bochner's formula, we have
\be\label{Hess f 1}
\al
CR^{-2}\Psi_1\geq& \frac{1}{2}\oint_{B(x,R)}\Delta \phi\cdot(|\d f|^2-\frac{2f}{n})\\
\geq&\frac{1}{2}\oint_{B(x,R)}\phi\cdot\Delta(|\d f|^2-\frac{2f}{n})\\
=&\oint_{B(x,R)}\phi\left(|\d^2 f|^2+Ric(\d f,\d f)-\frac{1}{n}\right)\\
\geq&\oint_{B(x,R)}\phi\left(|\d^2 f|^2-\frac{1}{n}-\lam|\d f|^2-\d_iV_jf_if_j\right).
\eal
\ee
Using Lemma \ref{lemma Lq bound of V} and the boundedness of $|\d f|$, we obtain
\be\label{Hess f 2}
\al
-\oint_{B(x,R)}\phi\d_iV_jf_if_j=&\oint_{B(x,R)}\phi_iV_jf_if_j+\phi V_jf_j+\phi V_jf_if_{ij}\\
\geq&-CR^{-1}\oint_{B(x,R)}|V|-\left(\oint_{B(x,R)}\phi |V|^2|\d f|^2\right)^{1/2}\cdot\left(\oint_{B(x,R)}\phi|\d^2 f|^2\right)^{1/2}\\
\geq&-CR^{-1-\a}K-\frac{1}{4}K^{-1/2}\oint_{B(x,R)}\phi |V|^2|\d f|^2-K^{1/2}\oint_{B(x,R)}\phi|\d^2 f|^2\\
\geq&-CR^{-1-\a}K-CR^{-2\a}K^{3/2}-K^{1/2}\oint_{B(x,R)}\phi|\d^2 f|^2.
\eal
\ee
It then follows from \eqref{Hess f 1} and \eqref{Hess f 2} that
\[
\al
\Psi_1
\geq& -C(\lam R^2+R^{1-\a}K+R^{2-2\a}K^{3/2})+CR^2\oint_{B(x,R)}\phi(1-K^{1/2})\left[|\d^2 f|^2-\frac{1}{n}\right]-K^{1/2}\frac{1}{n}.
\eal
\]
That is
\[
\al
\oint_{B(x,R)}\phi\left[|\d^2 f|^2-\frac{1}{n}\right]
\leq& (1-K^{1/2})^{-1}\left[\Psi_1+C(\lam R^2+R^{1-\a}K+R^{2-2\a}K^{3/2}+K^{1/2}R^2)\right]\\
=&\Psi_2.
\eal
\]
Therefore,
\be\label{bound Hess f}
\al
\oint_{B(x,(1-\Psi_1)R)}\left|\d^2f-\frac{1}{n}g_{ij}\right|^2\leq\oint_{B(x,R)}\phi\left[|\d^2 f|^2-\frac{1}{n}\right]\leq \Psi_2.
\eal
\ee

$Step\ 2.$ Now we may follow the argument in section 3 and 4 in \cite{ChCo1} (see also Theorem 9.45 of \cite{Ch}) to show \eqref{GH close} and \eqref{diam Z}.

Roughly speaking, for any points $y,z,w\in B(x,(1-\Psi_1)R)$, with $r_x(y)=r_x(z)=a$ and $z$ being the point on $r_x^{-1}(a)$ closest to $w$, by using the segment inequality \eqref{segment}, \eqref{L1 bound df-dU} and \eqref{bound Hess f}, one can find $y^*, z^*, w^*$ close to $y,z,w$, respectively, and
\be\label{cone eq1}
\int_{r_x(z^*)}^{r_x(z^*)+d(z^*,w^*)}|\d^2 f|(\gamma_s(t))dtds\leq \Psi,
\ee
and
\be\label{cone eq2}
\int_{r_x(z^*)}^{r_x(z^*)+d(z^*,w^*)}\left||\d f|^2-\frac{2f}{n}\right|(\sigma(s))ds\leq \Psi.
\ee
Here $\sigma(s)$, $r_x(z^*)\leq s\leq r_x(z^*)+d(z^*,w^*)$, is the minimal geodesic from $z^*$ to $w^*$, and $\gamma_s:\ [0,l(s)]\rightarrow \M$ the minimal geodesic from $x^*$ to $\sigma(s)$.

From \eqref{cone eq1}, \eqref{cone eq2} and the first variation of arc length, it then follows that $d(x^*,w^*)$ is close to the distance between them when they are considered to be two points in a metric cone on $r_x^{-1}(a)$.
\end{proof}

As pointed out in \cite{Ch}, when applying Theorem \ref{thm cone rigidity}, it is often more convenient  to verify the following condition instead of \eqref{volume condition}.


\begin{lemma}\label{lem cone rigidity}
If
\[
(1-\delta')\frac{\vol(B(x,\frac{1}{2}R))}{\vol(\underline{B}(0,\frac{1}{2}R))}\leq \frac{\vol(B(x,R))}{\vol(\underline{B}(0,R))},
\]
then for any $0<\eta\leq\frac{1}{2}$ and some $\Psi=\Psi(\delta',\lam,K|n,\a,\rho)$, we have
\[
(1-\Psi)\frac{\vol(B(x,(1-\eta)R))}{\vol(\underline{B}(0,(1-\eta)R))}\leq \frac{\vol(\pa B(x,(1-\eta)R))}{\vol(\pa \underline{B}(0,(1-\eta)R))},
\]
i.e.,
\[
(1-\Psi)\vol(B(x,(1-\eta)R))\leq \frac{R}{n}\vol(\pa B(x,(1-\eta)R)).
\]
\end{lemma}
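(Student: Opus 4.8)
The plan is to run the classical Bishop--Gromov ``almost monotonicity'' argument, now keeping track of the multiplicative errors produced by Theorem~\ref{thm volume element comparison}. Write $V(r)=\vol(B(x,r))$ and $A(r)=\vol(\pa B(x,r))$, so that $V$ is absolutely continuous with $V'=A$ a.e.; put $f(r)=V(r)/r^n$ and $\ell=(1-\eta)R\in[R/2,R]$. Since $\vol(\underline B(0,r))=\omega_n r^n$ and $\vol(\pa\underline B(0,r))=n\omega_n r^{n-1}$ with $\omega_n=\vol(S^{n-1})/n$, the hypothesis reads $(1-\delta')f(R/2)\le f(R)$, the asserted first inequality reads $(1-\Psi)\,f(\ell)\le A(\ell)/\big(n\ell^{n-1}\big)$, and the ``i.e.'' form only asks $A(\ell)\ge \tfrac{n}{R}(1-\Psi)V(\ell)$, which is weaker by a factor $1-\eta$. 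Two consequences of Section~2 will be used repeatedly, both valid for $R/2\le r_1<r_2\le R$ with a small $\Psi_0=\Psi_0(\lam,K|n,\a,\rho)$: the volume ratio estimate $V(r_2)/r_2^{\,n}\le e^{\Psi_0}V(r_1)/r_1^{\,n}$ from \eqref{volume comparison}, and the surface ratio estimate $A(r_2)/r_2^{\,n-1}\le e^{\Psi_0}A(r_1)/r_1^{\,n-1}$, obtained from the volume element comparison \eqref{volume element ratio} by integrating over $S^{n-1}$ and using that the cut--locus indicator is non-increasing in $r$.

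First I would extract quantitative near-constancy of $f$ on $[R/2,R]$: from $(1-\delta')f(R/2)\le f(R)$ together with the volume ratio estimate one gets $e^{-\Psi_0}f(R)\le f(\ell)\le \tfrac{e^{\Psi_0}}{1-\delta'}f(R)$ for every $\ell\in[R/2,R]$. Next, applying the surface ratio estimate on the annulus $[\ell,R]$ gives $A(r)\le e^{\Psi_0}(r/\ell)^{n-1}A(\ell)$ for $\ell\le r\le R$, and integrating,
\[
V(R)-V(\ell)=\int_\ell^R A(r)\,dr\ \le\ \frac{e^{\Psi_0}\,(R^n-\ell^{\,n})}{n\,\ell^{\,n-1}}\,A(\ell).
\]
On the other hand $V(R)-V(\ell)=R^nf(R)-\ell^{\,n}f(\ell)\ge R^nf(R)\big(1-\tfrac{e^{\Psi_0}}{1-\delta'}(1-\eta)^n\big)$. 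Since $R^n-\ell^{\,n}=R^n\big(1-(1-\eta)^n\big)$, dividing one bound by the other and writing $\sigma=\tfrac{e^{\Psi_0}}{1-\delta'}-1$, the two estimates combine to
\[
A(\ell)\ \ge\ \frac{n\,\ell^{\,n-1}}{e^{\Psi_0}}\,f(R)\,\Big(1-\frac{\sigma\,(1-\eta)^n}{1-(1-\eta)^n}\Big);
\]
then $1-(1-\eta)^n\ge\eta$ together with $f(R)\ge\tfrac{1-\delta'}{e^{\Psi_0}}f(\ell)$ yields $A(\ell)\ge n\,\ell^{\,n-1}\big(1-\tau-\tfrac{\sigma}{\eta}\big)f(\ell)$ with $\tau=1-\tfrac{1-\delta'}{e^{2\Psi_0}}$. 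This is exactly the first asserted inequality with $\Psi=\tau+\sigma/\eta$, and the ``i.e.'' form follows a fortiori with the same $\Psi$.

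The single delicate point is the thin-annulus regime $\eta\to0$: there $R-\ell$ is comparable to the error terms, so $\sigma/\eta$ need not be small and the division above is informative only once $\eta>\sigma$; strictly speaking $\Psi$ therefore also depends on a lower bound for $\eta$, while for each fixed $\eta\in(0,\tfrac12]$ one does have $\Psi=\Psi(\delta',\lam,K|n,\a,\rho)\to0$ as $\delta',\lam,K\to0$ (for $\delta',\lam,K$ small the algebra even gives $\Psi=0$), which is all that is needed when the lemma is invoked with $\eta$ a fixed structural constant in applications of Theorem~\ref{thm cone rigidity}. No input beyond the volume and volume-element comparisons of Section~2 is required; in particular the argument is pure ``volume bookkeeping'' with the two monotone-up-to-$e^{\Psi_0}$ quantities above and never uses Theorem~\ref{thm cone rigidity} itself.
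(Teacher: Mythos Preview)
Your argument is correct and follows essentially the same route as the paper: both use the surface--area comparison $A(r_2)/r_2^{\,n-1}\le e^{\Psi_0}A(r_1)/r_1^{\,n-1}$ (the paper packages this as a preliminary Claim about annulus/boundary ratios), then compare the annulus $[(1-\eta)R,R]$ to $A((1-\eta)R)$ and combine with the volume hypothesis by subtraction. Your observation that the resulting $\Psi$ carries a factor $\sigma/\eta$ and hence depends on a lower bound for $\eta$ is accurate and applies equally to the paper's final expression $\delta=\delta'+\big[2^n\Psi(1+\Psi)+\delta'\big]\,\vol(\underline B(0,(1-\eta)R))/\vol(\underline A_{(1-\eta)R,R}(0))$, which the paper does not flag; as you note, this is harmless in the applications since $\eta$ is always a fixed structural constant.
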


\proof See appendix II.\qed\\

From Lemma \ref{lem cone rigidity} and Theorem \ref{thm cone rigidity}, one can follow a similar argument as in Theorem 9.69 in \cite{Ch} to show that

\begin{theorem}
\lab{thvolclosetoghclose}
Suppose that \eqref{basichypo1} and \eqref{basichypo2} are satisfied. There exists an $r_0=r_0(n,\lam,K,\a,\rho)$ such that if
\[
\vol(B(x,R))\geq(1-\e)\vol(\underline{B}(0,R)),
\]
for some $R\leq r_0$ and sufficiently small $\e>0$, then
\[
d_{GH}(B(x,R),\underline{B}(0,R))\leq \Psi(\lam,K,\e | n,R,\a,\rho).
\]
\end{theorem}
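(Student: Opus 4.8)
The plan is to run the Cheeger--Colding ``almost volume rigidity implies almost metric rigidity'' scheme, as in \cite[Theorem 9.69]{Ch} (cf. \cite{ChCo1}), feeding in the volume comparison of Theorem \ref{thm volume element comparison} and the cone rigidity of Theorem \ref{thm cone rigidity} in place of the classical ingredients. First I would propagate the volume pinching to all smaller scales. Writing $\omega_n=\vol(\underline{B}(0,1))$ and $Q(r)=\vol(B(x,r))/(\omega_n r^n)$, inequality \eqref{volume comparison} gives $Q(R)\le e^{C(n,\lam,K,\a,\rho)(\lam R^2+KR^{1-\a})}Q(r)$ for every $r\le R$, while \eqref{volume noninflation} gives $Q(r)\le e^{C(\a)Kr^{1-\a}+\lam r^2}$, and $Q(r)\to 1$ as $r\to 0$. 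Since $Q(R)\ge 1-\e$ by hypothesis, these combine to pinch $Q(r)$ between $1-\Psi$ and $1+\Psi$ for all $r\in(0,R]$, where $\Psi=\Psi(\lam,K,\e\,|\,n,R,\a,\rho)$. In particular, for each fixed small $\eta\in(0,\tfrac12]$ the hypothesis of Lemma \ref{lem cone rigidity} holds with $\delta'=\Psi$, so the volume annulus condition \eqref{volume condition} is met at radius $(1-\eta)R$ with parameter $\delta$ of size $\Psi+\eta$, and Theorem \ref{thm cone rigidity} produces a length space $Z_\eta$ with $\diam(Z_\eta)\le\pi+\Psi$ and
\[
d_{GH}\bigl(B(x,(1-\eta)R),\,C_{0,(1-\eta)R}(Z_\eta)\bigr)\le\Psi .
\]

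What remains is to upgrade ``$\Psi$-close to a metric cone'' to ``$\Psi$-close to the Euclidean ball'', i.e. to show that almost maximality of the volume forces $Z_\eta$ to be $\Psi$-close to the unit round sphere $S^{n-1}$. I would argue this by contradiction. If it failed, there would be $\delta_0>0$ and sequences $\lam_i,K_i,\e_i\to 0$, manifolds $\M_i$ satisfying \eqref{basichypo1}--\eqref{basichypo2} with these constants and fixed $n,\a,\rho$, points $x_i$ and radii $R_i\le r_0$ with $\vol(B(x_i,R_i))\ge(1-\e_i)\vol(\underline{B}(0,R_i))$ yet $d_{GH}(B(x_i,R_i),\underline{B}(0,R_i))\ge\delta_0$. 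After rescaling $g_i\mapsto R_i^{-2}g_i$ — which replaces $\lam,K$ by $R_i^2\lam_i$ and $R_i^{1-\a}K_i$, both $\to 0$, and keeps the noncollapsing uniform — we may take $R_i=1$. By the volume doubling consequence of Theorem \ref{thm volume element comparison} the balls $B(x_i,1)$ are uniformly totally bounded, so along a subsequence $(B(x_i,1),x_i)\xrightarrow{d_{GH}}(Y,y)$; the first paragraph gives $\vol(B(x_i,r))\to\omega_n r^n$ for all $r\le 1$, so the renormalized limit measure on $Y$ has Euclidean volume growth, and running the cone-rigidity step at all scales (with the error exponents now $\to 1$) shows $Y$ is an \emph{exact} metric cone $C(Z_\infty)$ with $\diam(Z_\infty)\le\pi$.

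The crux is then the rigidity statement that such an $n$-dimensional metric cone with volume growth exactly $\omega_n r^n$, arising as a noncollapsed limit of manifolds with $\lam_i,K_i\to 0$, must be isometric to $\mathbb{R}^n$. This is where the modified-Ricci bound is genuinely used: in the limit the vector-field term disappears and Theorem \ref{thm volume element comparison} degenerates to sharp Bishop--Gromov monotonicity on $Y$, so $C(Z_\infty)$ carries a nonnegative synthetic Ricci bound, whence its cross section $Z_\infty$ satisfies $\Ric\ge n-2$ in the limit sense with $(n-1)$-dimensional measure $\vol(S^{n-1})$ and $\diam\le\pi$; applying the lower-dimensional case of the same rigidity to $Z_\infty$ — i.e. arguing by induction on $n$, the base case $n=1$ being trivial — gives $Z_\infty\cong S^{n-1}$, hence $Y\cong\underline{B}(0,1)$, contradicting $d_{GH}(B(x_i,1),\underline{B}(0,1))\ge\delta_0$. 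Feeding the resulting closeness of $Z_\eta$ to $S^{n-1}$ back into the cone approximation and letting $\eta\to 0$ finishes the proof. I expect the main obstacle to be precisely this identification of the maximal-volume cone with $\mathbb{R}^n$: one must check that the Bishop--Gromov rigidity and the inductive control of the cross section survive passage to the modified-Ricci limit — which is legitimate exactly because the error terms are governed by $\lam$ and $K$ and vanish under blow-down.
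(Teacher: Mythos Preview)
Your outline matches the paper's own approach: it also cites Lemma \ref{lem cone rigidity} and Theorem \ref{thm cone rigidity} and defers to \cite[Theorem 9.69]{Ch} without further details, and your first two paragraphs correctly supply the substitutions (the volume comparison of Theorem \ref{thm volume element comparison} in place of Bishop--Gromov, the contradiction setup with $\lam_i,K_i,\e_i\to 0$, the rescaling, and the passage to a limit cone $C(Z_\infty)$ with $\diam(Z_\infty)\le\pi$).

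The genuine gap is in your endgame. You propose to show $C(Z_\infty)\cong\mathbb R^n$ by ``applying the lower-dimensional case of the same rigidity to $Z_\infty$'' and inducting on $n$. But the theorem you are proving is about Riemannian manifolds satisfying \eqref{basichypo1}--\eqref{basichypo2}, while $Z_\infty$ is only a length space arising as a cross section of a limit; the statement cannot be fed back into its own cross section, and invoking ``$\Ric\ge n-2$ in the limit sense'' on $Z_\infty$ imports synthetic machinery the paper has not developed. The argument of \cite[Theorem 9.69]{Ch} is organised differently: from $\mathcal H^{n-1}(Z_\infty)=\vol(S^{n-1})$ and $\diam(Z_\infty)\le\pi$ one first extracts $\diam(Z_\infty)=\pi$, so $Y=C(Z_\infty)$ contains a line; then the splitting theorem (available here as the almost splitting theorem of Section 5, applied to the approximating $\M_i$ with $\lam_i,K_i\to 0$) gives $Y\cong\mathbb R\times Y'$. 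Any line in the factor $Y'$ is again a line in $Y$, so one iterates the splitting \emph{always on $Y$ itself}, peeling off one $\mathbb R$-factor at a time until $Y\cong\mathbb R^n$. The induction is thus on the number of $\mathbb R$-factors split off from the fixed limit space, never on a cross section viewed as a lower-dimensional manifold; this stays entirely within the class of spaces to which the paper's tools apply. You already flag this step as the main obstacle; the fix is to replace ``induct on $Z_\infty$'' by ``iterate the splitting on $Y$''.
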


Theorem
\ref{thm cone rigidity} also implies that the tangent cones of a Gromov-Hausdorff limit are metric cones as shown in Theorem 5.2 in \cite{ChCo2}.

\begin{theorem}[Tangent cones are metric cones]
\label{thtcmc}
Let $(\M_i, g_i)$ be a sequence of Riemannian manifolds satisfying \eqref{basichypo1} and $\eqref{basichypo2}$ for some $\lam,\ K,\ \a,$ and $\rho$. Suppose that $(\M_i,d_i)\xrightarrow{d_{GH}}(Y,d)$ in Gromov-Hausdorff topology. Then any tangent cone of $Y$ is a metric cone.
\end{theorem}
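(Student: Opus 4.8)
Since a tangent cone of $Y$ is by definition a pointed Gromov--Hausdorff limit of rescalings $(Y, s_j^{-1}d, y)$ for some $y\in Y$ and some $s_j\downarrow 0$, it suffices to fix such $y$ and $s_j$, pass to a subsequence with $(Y,s_j^{-1}d,y)\to (Y_\infty, d_\infty, y_\infty)$, and show that $(Y_\infty,d_\infty)$ is a metric cone with vertex $y_\infty$; the plan is to produce the cone structure from the cone rigidity theorem, Theorem~\ref{thm cone rigidity}, applied at all scales, after checking that the rescaled data still fits the framework of this paper. \textbf{Step 1 (rescaling preserves the hypotheses).} Fix $j$ large enough that $s_j\le 1$ and set $\tilde g_i = s_j^{-2}g_i$, $\tilde V_i = s_j^2 V_i$. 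Because $\Ric$ and the covariant components of $V_i$ are unchanged under a constant conformal rescaling, one computes $\Ric_{\tilde g_i}+\tfrac12\mc{L}_{\tilde V_i}\tilde g_i = \Ric_{g_i}+\tfrac12\mc{L}_{V_i}g_i \ge -\lam g_i = -(\lam s_j^2)\,\tilde g_i$ and $|\tilde V_i|_{\tilde g_i}(\cdot)\le (K s_j^{1-\a})\, d_{\tilde g_i}(\cdot,O_i)^{-\a}$, where $\a<1$ forces $K s_j^{1-\a}\to 0$. Moreover $B_{\tilde g_i}(x,1)=B_{g_i}(x,s_j)$, so \eqref{noncollapsing 2} (valid at radius $s_j\le1$) gives $\vol_{\tilde g_i}(B_{\tilde g_i}(x,1)) = s_j^{-n}\vol_{g_i}(B_{g_i}(x,s_j))\ge \rho\, e^{-C(\a)K-\lam}=:\rho'>0$. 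Thus $(\M_i,\tilde g_i)$ satisfies \eqref{basichypo1}--\eqref{basichypo2} with parameters $\lam_j=\lam s_j^2$, $K_j=K s_j^{1-\a}$, the same $\a$, and the fixed $\rho'$, and $\lam_j,K_j\to 0$. A diagonal argument then realizes $(Y_\infty,d_\infty,y_\infty)$ as a pointed Gromov--Hausdorff limit of manifolds of this type with vanishing $\lam_j, K_j$.

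\textbf{Step 2 (the limit volume ratio at $y_\infty$ is constant).} By Theorem~\ref{thm volume element comparison}(b), $\vol(B(x,r))/\vol(\underline{B}(0,r))$ changes by at most $\exp(C[\lam(r_2^2-r_1^2)+K(r_2-r_1)^{1-\a}])$ between radii $r_1<r_2\le1$, and this factor $\to1$ as $r_2\to0$; combined with the volume continuity theorem of Section~6 (which identifies the weak limit of the normalized volume measures with the $n$-dimensional measure on $Y$, still denoted $\vol$), this shows $\Theta:=\lim_{r\to0}\vol(B(y,r))/\vol(\underline{B}(0,r))$ exists and is finite and positive. In the $s_j$-rescaled picture the ratio at radius $r$ equals $\vol(B(y,s_j r))/\vol(\underline{B}(0,s_j r))$, which $\to\Theta$ for every fixed $r>0$; hence on $Y_\infty$ the function $r\mapsto \vol(B(y_\infty,r))/\vol(\underline{B}(0,r))$ is identically $\Theta$.

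\textbf{Step 3 (cone rigidity at every scale, then gluing).} Fix $R>0$ with $R\le r_0(n,\lam_j,K_j,\a,\rho')$ for all large $j$ (possible since $\lam_j,K_j\to0$ while $\rho'$ is fixed, so $r_0$ stays bounded below). Since the limit ratio is the constant $\Theta$, both $\vol(B(x^{(j)},R))/\vol(\underline{B}(0,R))$ and $\vol(B(x^{(j)},R/2))/\vol(\underline{B}(0,R/2))$ tend to $\Theta$ along the approximating manifolds, so the hypothesis $(1-\delta_j)\,\vol(B(x^{(j)},R/2))/\vol(\underline{B}(0,R/2))\le\vol(B(x^{(j)},R))/\vol(\underline{B}(0,R))$ of Lemma~\ref{lem cone rigidity} holds with $\delta_j\to0$. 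Lemma~\ref{lem cone rigidity} then yields the volume condition \eqref{volume condition} at scale $(1-\eta)R$ with vanishing error, and Theorem~\ref{thm cone rigidity} gives $d_{GH}(B(x^{(j)},(1-\eta)R),\,C_{0,(1-\eta)R}(Z_j))\le\Psi_j$ with $\diam(Z_j)\le\pi+\Psi_j$, $\Psi_j\to0$. Letting $j\to\infty$ and then $\eta\to0$, $B(y_\infty,R)$ is isometric to the ball of radius $R$ about the vertex in a metric cone over a length space of diameter $\le\pi$; checking, as in the proof of Theorem~5.2 of \cite{ChCo2}, that the cone structures on nested balls are compatible shows $B(y_\infty,R_0)$ is a metric cone ball for a fixed $R_0$, and since any constant rescaling of a tangent cone is again a tangent cone (and a rescaled metric cone is a metric cone), the statement propagates to arbitrarily large radius, so $Y_\infty$ is globally a metric cone. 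The ``in particular'' assertion is the special case $\M_i=\M$ for all $i$.

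\textbf{Expected main obstacle.} The delicate part is Step~2: one must pass the normalized volume measures to the limit via the volume continuity theorem, and one must use that the comparison error $K r^{1-\a}$ --- and not just $\lam r^2$ --- vanishes as $r\to0$, which is precisely where the hypothesis $\a<1$ on the order of the singularity of $V$ is used. Everything afterwards --- feeding the constant volume ratio through Lemma~\ref{lem cone rigidity} and Theorem~\ref{thm cone rigidity}, and the bookkeeping for compatibility of the cone structures at different scales --- follows the Cheeger--Colding template with only cosmetic modifications.
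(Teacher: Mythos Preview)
Your argument is correct and follows precisely the Cheeger--Colding route the paper itself invokes (the paper offers no proof beyond citing Theorem~5.2 of \cite{ChCo2} once Theorem~\ref{thm cone rigidity} and Lemma~\ref{lem cone rigidity} are in hand). One small clean-up: in Step~2 you appeal to the volume continuity theorem on the possibly singular limit $Y$, but the paper states that result only for manifold limits---this is harmless, since the hypothesis of Lemma~\ref{lem cone rigidity} follows directly from Theorem~\ref{thm volume element comparison}(b) applied on the rescaled manifolds themselves (the ratios at $R$ and $R/2$ differ by at most $e^{C(\lambda_j R^2+K_j R^{1-\a})}\to 1$), so no measure on $Y$ need ever be defined.
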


Finally, following the proof of Theorem 6.1 in \cite{ChCo2} closely, we deduce

\begin{theorem}[Size estimate of the singular set]
\label{thsizesc}
Let $(\M_i, g_i)$ be a sequence of Riemannian manifolds satisfying \eqref{basichypo1} and \eqref{basichypo2} for some $\lam,\ K,\ \a,$ and $\rho$. Suppose that $(\M_i,d_i)\xrightarrow{d_{GH}}(Y,d)$ in Gromov-Hausdorff topology. Then $dim S(Y) \le n-2$. Here, $S(Y)$ is the singular set of $Y$ and $dim$ stands for the Hausdorff dimension.
\end{theorem}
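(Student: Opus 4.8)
The plan is to transcribe the proof of Theorem~6.1 in \cite{ChCo2} almost verbatim; the only genuine task is to check that each geometric input used there is available here. I would begin by recording the structure of the limit $Y$. Passing to a subsequence, the renormalized Riemannian volume measures converge to a Radon measure $\nu$ on $Y$, and by the volume continuity statements of Section~6 together with the non-collapsing hypothesis \eqref{basichypo2} one gets $c\,r^n\le \nu(B(y,r))\le C\,r^n$ for all $y\in Y$ and $0<r\le 1$; by the almost-monotonicity in Theorem~\ref{thm volume element comparison}(b) the function $r\mapsto e^{-C(n,\lam,K,\a,\rho)(\lam r^2+Kr^{1-\a})}\nu(B(y,r))/r^n$ is non-increasing, so the density $\theta(y):=\lim_{r\to 0}\nu(B(y,r))/r^n$ exists and lies in $(0,\infty)$. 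A homothety computation shows that under the rescaling $g\mapsto c^2 g$ the hypotheses \eqref{basichypo1} and \eqref{basichypo2} persist with $\lam$ replaced by $\lam/c^2$, with $K$ replaced by $Kc^{\a-1}$ (here $\a<1$ is essential), and with the non-collapsing constant uniformly bounded below; hence as $c\to\infty$ all parameters enter the regime $\lam=K=0$. Consequently Theorem~\ref{thtcmc} applies and every tangent cone $(Y_y,\nu_y,y_\infty)$ of $Y$ is a metric cone $C(Z)$ whose vertex density equals $\theta(y)$, with $\nu_y$ proportional to $\mathcal H^n$; and by Theorem~\ref{thvolclosetoghclose} (volume rigidity at small scales) the cone $C(Z)$ is isometric to $\mathbb R^n$ if and only if $\theta(y)$ takes the Euclidean value. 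In particular $S(Y)$ is precisely the set of $y$ whose density is sub-Euclidean, and it coincides with the top stratum $S_{n-1}(Y)$ introduced below.

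Next I would introduce the standard stratification $S_0(Y)\subseteq S_1(Y)\subseteq\cdots\subseteq S_{n-1}(Y)=S(Y)$, where $S_k(Y)$ consists of those $y$ at which no tangent cone splits off an isometric factor $\mathbb R^{k+1}$. Each tangent cone has Hausdorff dimension $n$ (non-collapsing plus volume convergence), so a tangent cone that splits $\mathbb R^n$ must equal $\mathbb R^n$; hence $y\notin S_{n-1}$ iff $y$ is regular, which justifies $S_{n-1}=S(Y)$. The heart of the matter is the bound $\dim S_k(Y)\le k$ for every $k$, which I would prove by Federer-type dimension reduction exactly as in Section~5 of \cite{ChCo2}: if $\mathcal H^{k+\eta}(S_k)>0$ for some $\eta>0$, select a point of positive upper $\mathcal H^{k+\eta}$-density for $S_k$, blow up, and exploit that tangent cones are genuine cones — so the splitting number is self-similar along rays — together with the upper semicontinuity of the splitting number under Gromov--Hausdorff convergence, which is a consequence of the existence of $\e$-splitting maps with Hessian control (Lemma~\ref{lem coordinate approximation}, Definition~\ref{defhsplit}) and the almost splitting theorem. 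Iterating, after finitely many blow-ups one lands in the $0$-dimensional case and reaches a contradiction.

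Finally I would close the top of the filtration by showing $S_{n-1}(Y)=S_{n-2}(Y)$. If $y\in S_{n-1}\setminus S_{n-2}$, some tangent cone is isometric to $\mathbb R^{n-1}\times C(W)$ with $C(W)$ a one-dimensional metric cone containing no line; by the Bishop--Gromov inequality (valid in the limit, since the factor $e^{C(\lam r^2+Kr^{1-\a})}$ tends to $1$) the cross section $W$ has at most two points, and a short case check — using Theorem~\ref{thvolclosetoghclose} to exclude a two-point cross section at distance $<\pi$ — forces $C(W)$ to be the half-line $[0,\infty)$. Then $\mathbb R^{n-1}\times[0,\infty)$ is a tangent cone of $Y$, i.e. $y$ would be a measure-theoretic boundary point; but a non-collapsed Gromov--Hausdorff limit of closed manifolds has no such points, by comparing $\nu$-volumes of balls centered on either side of the putative boundary with the volume-rigidity conclusion of Theorem~\ref{thvolclosetoghclose}. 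Hence $S(Y)=S_{n-1}=S_{n-2}$, and combined with $\dim S_{n-2}\le n-2$ this yields $\dim S(Y)\le n-2$.

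The main obstacle is organizational rather than conceptual: each step of the Cheeger--Colding argument must be invoked in its quantitative $\Psi(\lam,K,\e\mid n,R,\a,\rho)$ form, and one must check that these error terms genuinely vanish in the blow-up limits — which they do, precisely because of the scaling $\lam\mapsto\lam/c^2$, $K\mapsto Kc^{\a-1}$ recorded above, valid since $\a<1$. A secondary point to watch is that, because $\lam,K>0$, the density $\theta$ of $Y$ itself need not obey a clean upper bound by the Euclidean value; this is harmless, since the rigidity actually entering the dimension reduction is the sharp one at the infinitesimal scale, where tangent cones are honest metric cones satisfying the exact dichotomy.
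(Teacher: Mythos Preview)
Your outline matches the paper's approach closely: both follow Theorem~6.1 of \cite{ChCo2}, set up the stratification $S_0\subseteq\cdots\subseteq S_{n-1}=S(Y)$, invoke the Federer dimension reduction for $\dim S_k\le k$, and then close the filtration by ruling out the half-space $\mathbb R^{n-1}\times[0,\infty)$ as a tangent cone. Your scaling computation $\lam\mapsto c^{-2}\lam$, $K\mapsto K c^{\a-1}$ is correct and is exactly what makes the tangent-cone analysis reduce to the case $\lam=K=0$.

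The one place where your sketch is thinner than the paper is the half-space exclusion. You write that it follows ``by comparing $\nu$-volumes of balls centered on either side of the putative boundary with the volume-rigidity conclusion of Theorem~\ref{thvolclosetoghclose}.'' This is not yet an argument: in the tangent cone there is no ``other side,'' and a naive volume-containment comparison does not produce a contradiction. The paper is explicit here: one runs the mod~$2$ degree argument of \cite{ChCo2}, building a map $\hat f$ into the half-space whose degree is simultaneously $1$ (because it is an embedding) and $0$. Verifying that $\hat f$ is an embedding is the substantive point, and it rests on two ingredients the paper singles out --- Reifenberg's theorem (Theorem~A.1.1 of \cite{ChCo2}, valid for general metric spaces and hence here) and the volume-to-GH rigidity (Theorem~A.1.5 there, supplied in this setting by Theorem~\ref{thvolclosetoghclose}). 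You do cite Theorem~\ref{thvolclosetoghclose}, but you should name Reifenberg and the degree argument rather than a heuristic volume comparison; as written, that step is a gap. (Minor slip: the $M_i$ are complete, not assumed closed.)
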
  Recall that $S(Y)$ is the set of points in $Y$ such that a tangent cone at $y$
is not isometric to a Euclidean space.

 To prove the theorem, one just needs to show that no tangent cone is isometric to the half Euclidean space.
We mention that a small modification is needed in the proof. Namely, as pointed out in
the proof of Theorem 6.1 \cite{ChCo2}, one can replace the usage of Perelman's
theorem in \cite{Pe} by using Theorem A.1.8 in \cite{ChCo2}. The conclusion of this later theorem continues to hold in
our situation since it follows from the conclusions of Theorem A.1.1 in  and
Theorem A.1.5 in \cite{ChCo2}.
But Theorem A.1.1 in \cite{ChCo2} is a version of Reifenberg's theorem \cite{Re} which is
a result for more general metric spaces. So it is still valid here.  The conclusion of Theorem A.1.5
is still true here due to Theorem \ref{thvolclosetoghclose} above.
So the map $\hat f$ in the proof of Theorem 6.1 \cite{ChCo2} is an imbedding. Therefore it is
mod 2 degree is $1$. But from p437 in \cite{ChCo2},
 its mod 2 degree is also $0$. This contradiction finishes the proof.

\begin{remark}

In the case of Bakry-\'Emery Ricci curvature condition, the conclusions of the theorems  in this section  continue to hold under conditions
\eqref{gradient riccond} and \eqref{condition L}.  Namely, one only needs the potential function $L$ to be H\"older continuous
and its gradient in certain $L^p$ space. In this case it is  not clear whether the segment inequality \ref{thsegineq} still hold.
However, a weaker form of it is available from the work \cite{TZ2}, which is sufficient for the proof of the main results.

\end{remark}

\begin{remark}
With a little more effort, along the lines of Section 3 in \cite{PeWe1}, one can also prove some compactness and finiteness results of topological types under  conditions \eqref{basichypo1} and \eqref{basichypo2}  and extra
integral assumption on the curvature tensor. Also, with two sided bounds on the Ricci curvature with two vector fields $U$ and $V$, namely,
\[
\frac{1}{2}\mc{L}_{U} g  +  \lam g \ge \Ric \ge - \frac{1}{2}\mc{L}_{V} g  - \lam g; \quad |U|+
|V|(y)\leq \frac{K}{d(y,O)^\a},\ \a \in [0, 1);
\quad \vol (B(x, 1)) \ge \rho,
\]
one can also prove that $S(Y)$ is a closed set using the technique in \cite{ChCo2} and the results here.
\end{remark}

\section{Appendix I}

In this section, we give a proof of Theorem \ref{thmpoisgrad}, namely
\begin{theorem}
Assume that
\[
\Ric+\frac{1}{2}\mc{L}_{V} g\geq - \lam g, \quad
|V|(y)\leq \frac{K}{d(y,O)^\a}, \quad \a \in [0, 1),
\quad \vol (B(x, 1)) \ge \rho.
\]
Then there exists a positive constant  $r_0=r_0(n,\lam,K,\a,\rho)\leq 1$ such that,  for any $x\in\M$, $0<r\leq r_0$ and smooth functions $u$ and $f$ satisfying the equation
\[
\Delta u=f,\ in\  B(x,r),
\]
we have
\[
\sup_{B(x,\frac{1}{2}r)}|\d u|^2\leq C(n,\lam,K,\a,\rho)r^{-2}\left[(||u||^*_{2,B(x,r)})^2+(||f||^*_{2q,B(x,r)})^2\right],
\]
for any $q>n/2$.  Moreover
\[
\sup_{B(x,\frac{1}{2}r)} u^2\leq C(n,\lam,K,\a,\rho) \left[(||u||^*_{2,B(x,r)})^2+(||f||^*_{q ,B(x,r)})^2\right].
\]

In particular if $\a=0$, then the conclusions hold without the non-collapsing condition.
\end{theorem}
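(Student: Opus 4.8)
The plan is to run a De~Giorgi--Nash--Moser iteration built on the scale--invariant Sobolev inequality \eqref{L2 Sobolev} of Corollary~\ref{sobolev}; the only place the argument departs from the classical one is the curvature term in Bochner's formula, which --- since \eqref{riccond} supplies no pointwise lower bound for $\Ric$ itself --- must be kept under an integral sign and processed by integration by parts. Throughout, $r\le r_0\le 1$ and all constants depend on $n,\lam,K,\a,\rho$. I first dispose of the bound on $u^2$: since $\Delta u=f$ in $B(x,r)$, both $u$ and $-u$ are subsolutions of $\Delta w\ge-|f|$, so the classical local boundedness estimate for subsolutions --- Moser iteration via \eqref{L2 Sobolev} and the volume doubling furnished by Theorem~\ref{thm volume element comparison} --- yields
\[
\sup_{B(x,\frac12 r)}|u|\ \le\ C\Big(||u||^*_{2,B(x,r)}+r^2\,||f||^*_{q,B(x,r)}\Big),\qquad q>\tfrac n2 ,
\]
and, as $r\le r_0\le1$, absorbing $r^2$ and squaring gives the asserted inequality. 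When $\a=0$ the Sobolev inequality holds without the non--collapsing hypothesis, so this step and everything below does too.

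For the gradient bound, set $v=|\d u|^2$. Bochner's formula, $\Delta u=f$, and \eqref{riccond} give, where $\d u\ne0$,
\[
\tfrac12\Delta v\ \ge\ |\d^2u|^2+\langle\d f,\d u\rangle-\lam v-(\d_iV_j)u_iu_j ,\qquad u_i:=\d_iu .
\]
The term $(\d_iV_j)u_iu_j$ has no pointwise bound under \eqref{condition V}; within the Moser scheme, however, one integrates it by parts against the test weight, transferring the derivative off $V$ and turning it into expressions of the shape $|V|\,|\d\phi|\,v$, $\ |V|\,\phi\,|f|\,|\d u|$ and $\ |V|\,\phi\,|\d u|\,|\d^2u|$; the last is absorbed into (a retained fraction of) the good term $\int\phi|\d^2u|^2$ by Cauchy--Schwarz, at the price of a coefficient $|V|^2$. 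The term $\langle\d f,\d u\rangle$ is likewise integrated by parts, contributing only $f^2$ and $|f|\,|\d u|$. The upshot is that $v\ge0$ is, in the weak sense, a subsolution of
\[
\Delta v\ \ge\ -h\,v-g,\qquad h:=C(\lam+|V|^2),\qquad g:=Cf^2 ,
\]
the zero set of $\d u$ being handled by the standard regularization $v_\e=|\d u|^2+\e$ or by Kato's inequality.

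The structural point that makes the iteration go through is that $h\in L^p(B(x,r))$ for some $p>\tfrac n2$: by Lemma~\ref{lemma Lq bound of V}, $||V||^*_{2p,B(x,r)}\le C\,r^{-\a}$ whenever $2p<n/\a$, and since $\a<1$ one may choose such a $p$ with $p>\tfrac n2$, whence $||h||^*_{p,B(x,r)}\le C(\lam+r^{-2\a})$ and $r^2||h||^*_{p,B(x,r)}\le C(\lam+r_0^{2-2\a})$ stays bounded. Running Moser iteration for the nonnegative subsolution $v$ of $\Delta v\ge-hv-g$ --- again via \eqref{L2 Sobolev} and this $L^p$ bound, allowing any positive exponent in the starting integral norm --- gives
\[
\sup_{B(x,\frac12 r)}v\ \le\ C\Big(||v||^*_{1,B(x,\frac34 r)}+r^2\,||g||^*_{q,B(x,\frac34 r)}\Big).
\]
A Caccioppoli estimate (multiply $\Delta u=f$ by $\phi^2 u$ and integrate by parts) bounds $||v||^*_{1,B(x,\frac34 r)}\le Cr^{-2}\big(||u||^*_{2,B(x,r)}\big)^2+C\big(||f||^*_{2,B(x,r)}\big)^2$, while $||g||^*_{q,B(x,\frac34 r)}=C\,||f^2||^*_{q}=C\big(||f||^*_{2q}\big)^2$; inserting these and using $r\le r_0\le1$ produces precisely the claimed gradient estimate.

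The main --- and essentially the only non--classical --- obstacle is the middle step: absent a pointwise Ricci lower bound one must keep $\mc{L}_{V}g$ under the integral, integrate by parts to trade $\d V$ against derivatives of the test weight, against the Laplacian $f$, and against $\d^2u$, and then verify that the resulting coefficient $|V|^2$ lies in $L^p$ with $p>\tfrac n2$ and has scale--invariant norm bounded as $r\to0$. This is exactly where the hypothesis $\a<1$ is used, and where Lemma~\ref{lemma Lq bound of V} --- and hence, when $\a>0$, the non--collapsing assumption --- enters.
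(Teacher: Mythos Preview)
Your approach is essentially the paper's: Moser iteration driven by the scale--invariant Sobolev inequality, with the Ricci term handled by integrating $(\mc{L}_Vg)_{ij}u_iu_j$ by parts inside each step so that only $|V|$ and $|V|^2$ coefficients survive, and then invoking Lemma~\ref{lemma Lq bound of V} to control $\||V|^2\|^*_{p,B(x,r)}$ for some $p>\tfrac n2$. The Caccioppoli step and the final assembly are identical in spirit.

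Two small points of comparison. First, the paper sets $v=|\d u|^2+\|f^2\|^*_{q,B(x,r)}$ rather than $v=|\d u|^2$; adding this constant lets the inhomogeneous $f^2$ term be absorbed as $\eta^2v^{2p-1}f^2\le \eta^2v^{2p}f^2/\|f^2\|^*_q$, reducing everything to a single Moser loop. Your choice of keeping $g=Cf^2$ as a separate inhomogeneous term is an equivalent packaging. Second, your summary ``$v$ is, in the weak sense, a subsolution of $\Delta v\ge -hv-g$'' is slightly imprecise: the integration by parts you (correctly) describe produces terms of the shape $|V|\,|\d\phi|\,v$ and $|f|\,|\d\phi|\,|\d u|$ that involve $\d\phi$ and therefore do not fit a clean weak differential inequality with test function $\phi$ alone. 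As your own preceding sentence indicates, these must be processed \emph{inside} each iteration step with the specific test weight $\eta^2v^{2p-1}$, absorbing part of them into $\int|\d(\eta v^p)|^2$ --- which is exactly what the paper carries out line by line. This is a matter of phrasing, not a gap.
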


\begin{proof}
Let
\[
v=|\d u|^2+||f^2||^*_{q, B(x,r)}.
\]
Then
\be\label{Delta v}
\al
\Delta v&=2|\d^2 u|^2+2<\d \Delta u,\d u>+2Ric(\d u, \d u)\\
&\geq 2u_if_i-2\lam v-(\mc{L}_{V}g)_{ij}u_iu_j.
\eal
\ee
For any $p>0$,
\be\label{Delta v p}
\al
\Delta v^p=& pv^{p-1}\Delta v+p(p-1)v^{p-2}|\d v|^2\\
\geq& 2pv^{p-1}u_if_i-2\lam pv^p-pv^{p-1}(\mc{L}_{V}g)_{ij}u_iu_j+\frac{p-1}{p}v^{-p}|\d v^p|^2.
\eal
\ee
Let $B=B(x,r)$ and $p\geq 1$, then it follows from \eqref{Delta v p} that
\be\label{GE eq1}
\al
\int_B |\d(\eta v^p)|^2=&\int_B |\eta\d v^p+v^p\d \eta|^2\\
=& \int_B \eta^2|\d v^p|^2+v^{2p}|\d\eta|^2+2\eta v^p<\d v^p, \d \eta>\\
=& \int_B v^{2p}|\d\eta|^2-\eta^2v^p\Delta v^p\\
\leq& \int_B v^{2p}|\d\eta|^2 -2p\eta^2v^{2p-1}u_if_i+2\lam p\eta^2v^{2p}+p\eta^2v^{2p-1}(\mc{L}_{V}g)_{ij}u_iu_j.
\eal
\ee
Moreover, since
\[
\al
&\int_B \eta^2v^{2p-1}(\mc{L}_{V}g)_{ij}u_iu_j\\
=&\int_B \eta^2v^{2p-1}\d_jV_iu_iu_j\\
=&-\int_B 2\eta v^{2p-1}\eta_jV_iu_iu_j+(2p-1)\eta^2v^{2p-2}v_jV_iu_iu_j+\eta^2v^{2p-1}V_iu_{ij}u_j+\eta^2v^{2p-1}fV_iu_i\\
\leq& \int_B v^{2p}|\d \eta|^2 +\eta^2 v^{2p-2}|V|^2|\d u|^4 -\frac{2p-1}{p}\eta v^{p-1}V_iu_iu_j[(\eta v^p)_j-v^p\eta_j]-\frac{1}{2}\eta^2v^{2p-1}V_iv_i\\
&\quad +\frac{1}{2}\eta^2v^{2p-2}f^2|\d u|^2+\frac{1}{2}\eta^2v^{2p}|V|^2\\
\leq& \int_B v^{2p}|\d \eta|^2 +\frac{3}{2}\eta^2 v^{2p}|V|^2 -\frac{2p-1}{p}\eta v^{p-1}V_iu_iu_j[(\eta v^p)_j-v^p\eta_j]\\
&\quad -\frac{1}{2p}\eta^2v^{p}V_i[(\eta v^p)_i-v^p\eta_i]+\frac{1}{2}\eta^2v^{2p-2}f^2|\d u|^2\\
\eal
\]
\[
\al
\leq& \int_B v^{2p}|\d \eta|^2 +\frac{3}{2}\eta^2 v^{2p}|V|^2 +\frac{1}{4p}|\d(\eta v^p)|^2 + \frac{(2p-1)^2}{p}\eta^2 v^{2p}|V|^2+ \frac{2p-1}{2p}v^{2p}|\d \eta|^2\\
&\quad+\frac{2p-1}{2p}\eta^2v^{2p}|V|^2+\frac{1}{4p}|\d(\eta v^p)|^2+ \frac{1}{4p}\eta^2v^{2p}|V|^2+\frac{1}{4p}\eta^2v^{2p}|V|^2\\
&\quad +\frac{1}{4p}v^{2p}|\d\eta|^2+\frac{1}{2}\eta^2v^{2p-1}f^2\\
=&\int_B \frac{8p-1}{4p}v^{2p}|\d \eta|^2 +\frac{2(2p-1)^2+5p}{2p}\eta^2 v^{2p}|V|^2
+\frac{1}{2p}|\d(\eta v^p)|^2+\frac{1}{2}\eta^2v^{2p-1}f^2,
\eal
\]
we can rewrite \eqref{GE eq1} as
\be\label{GE eq2}
\al
\int_B |\d(\eta v^p)|^2
\leq& \int_B 2v^{2p}|\d\eta|^2 -4p\eta^2v^{2p-1}u_if_i+4\lam p\eta^2v^{2p}+\frac{8p-1}{2}v^{2p}|\d \eta|^2\\
&\quad +(2(2p-1)^2+5p)\eta^2 v^{2p}|V|^2+p\eta^2v^{2p-1}f^2.
\eal
\ee

Notice that
\[
\al
&-\int_B\eta^2v^{2p-1}u_if_i\\
=&\int_B\eta^2v^{2p-1}f^2+2\eta v^{2p-1}f\d_iu\d_i\eta+(2p-1)\eta^2v^{2p-2}f\d_iu\d_iv\\
=&\int_B\eta^2v^{2p-1}f^2+2\eta v^{2p-1}f\d_iu\d_i\eta+\frac{2p-1}{p}\eta v^{p-1}f\d_iu(\d_i(\eta v^p)-v^p\d_i\eta)\\
=&\int_B\eta^2v^{2p-1}f^2+\frac{1}{p}\eta v^{2p-1}f\d_iu\d_i\eta+\frac{2p-1}{p}\eta v^{p-1}f\d_iu\d_i(\eta v^p)\\
\leq& \int_B \eta^2v^{2p-1}f^2+\frac{1}{2p}\eta^2v^{2p-2}f^2|\d u|^2+\frac{1}{2p}v^{2p}|\d \eta|^2+\frac{1}{8p}|\d(\eta v^p)|^2 +\frac{2(2p-1)^2}{p}\eta^2v^{2p-2}f^2|\d u|^2\\
\leq& \int_B \frac{4(2p-1)^2+1}{2p}\eta^2v^{2p-1}f^2+\frac{1}{2p}v^{2p}|\d \eta|^2+\frac{1}{8p}|\d(\eta v^p)|^2.
\eal
\]
Thus, it follows that
\be\label{GE eq3}
\al
\int_B |\d(\eta v^p)|^2
\leq& \int_B 4v^{2p}|\d\eta|^2+8\lam p\eta^2v^{2p}+(8p-1)v^{2p}|\d \eta|^2+(4(2p-1)^2+10p)\eta^2 v^{2p}|V|^2\\
&\quad+2p\eta^2v^{2p-1}f^2+(16(2p-1)^2+4)\eta^2v^{2p-1}f^2+4v^{2p}|\d \eta|^2,
\eal
\ee
Assume that $r_i=(\frac{1}{2}+\frac{1}{2^{i+2}})r$, $i=0,1,2,\cdots$. Construct cut-off function $\phi_i(s)$ such that
\[
supp\,\phi_i\subseteq[0,r_i],\quad \phi_i=1\  on\  [0,r_{i+1}],\ and\ -\frac{52^i}{r}\leq \phi_i'\leq 0;
\]

Let $\eta_i(y)=\phi_i(d(y,x))$. Then \eqref{GE eq3} implies
\be\label{GE eq4}
\al
\oint_{B(x,r_i)} |\d(\eta_i v^p)|^2
\leq& \oint_{B(x,r_i)} 8\lam p\eta_i^2v^{2p}+16pv^{2p}|\d \eta_i|^2+30p^2\eta_i^2 v^{2p}|V|^2 +70p^2\eta_i^2v^{2p-1}f^2.
\eal
\ee

On the other hand, since $\frac{r}{2}\leq r_i\leq \frac{3}{4}r$, by the volume comparison theorem
\be\label{young}
\al
&p^2\oint_{B(x,r_i)}\eta_i^2 v^{2p-1}f^2\\
\leq&\frac{p^2}{||f^2||^*_{q, B(x,r)}}\oint_{B(x,r_i)}\eta_i^2 v^{2p}f^2\\
\leq& C(n,\lam,K,\a,\rho)p^2\left(\oint_{B(x,r_i)}(\eta_i v^p)^{\frac{2q}{q-1}}\right)^{\frac{q-1}{q}}\\
\leq& C(n,\lam,K,\a,\rho)p^2\left(\oint_{B(x,r_i)}(\eta_i v^p)^{a\cdot\frac{2q}{q-1}\cdot b}\right)^{\frac{q-1}{qb}}\cdot\left(\oint_{B(x,r_i)}(\eta_i v^p)^{(1-a)\cdot\frac{2q}{q-1}\cdot\frac{b}{b-1}}\right)^{\frac{(q-1)(b-1)}{qb}}\\
\leq& \e\left(\oint_{B(x,r_i)}(\eta_i v^p)^{a\cdot\frac{2q}{q-1}\cdot b}\right)^{\frac{q-1}{qba}}+\e^{-\frac{a}{1-a}}C^{\frac{1}{1-a}}p^{\frac{2}{(1-a)}}\left(\oint_{B(x,r_i)}(\eta_i v^p)^{(1-a)\cdot\frac{2q}{q-1}\cdot\frac{b}{b-1}}\right)^{\frac{(q-1)(b-1)}{qb(1-a)}}.
\eal
\ee
Here in the last step, we have used Young's inequality
\[
xy\leq \e x^{\gamma}+\e^{-\frac{\gamma^*}{\gamma}}y^{\gamma^*},\ \forall x,y>0, \gamma>1, \frac{1}{\gamma}+\frac{1}{\gamma^*}=1,
\]
for $\gamma=\frac{1}{a}$.

Since $q>\frac{n}{2}$, if we choose $a=\frac{n}{2q}$, and $b=\frac{2q-2}{n-2}$, it follows from \eqref{young} that
\[
\al
p^2\oint_{B(x,r_i)}\eta_i^2 v^{2p-1}f^2
\leq  \e\left(\oint_{B(x,r_i)}(\eta_i v^p)^{\frac{2n}{n-2}}\right)^{\frac{n-2}{n}}+\e^{-\frac{a}{1-a}}C^{\frac{2q}{2q-n}}p^{\frac{4q}{2q-n}}\oint_{B(x,r_i)}\eta_i^2 v^{2p}.
\eal
\]
Hence, \eqref{GE eq4} becomes
\be\label{GE eq5}
\al
\oint_{B(x,r_i)} |\d(\eta_i v^p)|^2
\leq& \oint_{B(x,r_i)} 8\lam p\eta_i^2v^{2p}+16pv^{2p}|\d \eta_i|^2+30p^2\eta_i^2 v^{2p}|V|^2\\
& +70\e\left(\oint_{B(x,r_i)}(\eta_i v^p)^{\frac{2n}{n-2}}\right)^{\frac{n-2}{n}}+70\e^{-\frac{a}{1-a}}C^{\frac{2q}{2q-n}}p^{\frac{4q}{2q-n}}\oint_{B(x,r_i)}\eta_i^2 v^{2p}.
\eal
\ee
If $q\in(\frac{n}{2},\frac{n}{2\a})$, then
\[
\al
&30p^2\oint_{B(x,r_i)}\eta_i^2v^{2p}|V|^2\\
\leq& 30p^2\left(\oint_{B(x,r_i)}(\eta_i v^p)^{\frac{2q}{q-1}}\right)^{\frac{q-1}{q}}\cdot\left(\oint_{B(x,r_i)}|V|^{2q}\right)^{1/q}\\
\leq& p^2C(n,\lam,K,\a,\rho)r_i^{-2\a}\left(\oint_{B(x,r_i)}(\eta_i v^p)^{\frac{2q}{q-1}}\right)^{\frac{q-1}{q}}\\
\leq& \e r_i^{-2\a}\left(\oint_{B(x,r_i)}(\eta_i v^p)^{2n/(n-2)}\right)^{\frac{n-2}{n}}+\e^{-\frac{a}{1-a}}p^{\frac{2}{1-a}}C^{\frac{1}{1-a}}r_i^{-2\a}\oint_{B(x,r_i)}\eta_i^2 v^{2p},
\eal
\]
for any $\e>0$ and $a=\frac{n}{2q}$. Here, the last step above is followed similarly as in \eqref{young}.
Therefore, \eqref{GE eq5} becomes
\be\label{GE eq6}
\al
\oint_{B(x,r_i)} |\d(\eta_i v^p)|^2
\leq& \oint_{B(x,r_i)} 8\lam p\eta_i^2v^{2p}+16pv^{2p}|\d \eta_i|^2\\
& +\e r_i^{-2\a}\left(\oint_{B(x,r_i)}(\eta_i v^p)^{\frac{2n}{n-2}}\right)^{\frac{n-2}{n}}+\e^{-\frac{a}{1-a}}C^{\frac{2q}{2q-n}}p^{\frac{4q}{2q-n}}r_i^{-2\a}\oint_{B(x,r_i)}\eta_i^2 v^{2p}\\
& +70\e\left(\oint_{B(x,r_i)}(\eta_i v^p)^{\frac{2n}{n-2}}\right)^{\frac{n-2}{n}}+70\e^{-\frac{a}{1-a}}C^{\frac{2q}{2q-n}}p^{\frac{4q}{2q-n}}\oint_{B(x,r_i)}\eta_i^2 v^{2p}.
\eal
\ee
By the Sobolev inequality \eqref{L2 Sobolev} and \eqref{GE eq6},
\be\label{GE eq7}
\al
&\left(\oint_{B(x,r_i)}(\eta_i v^p)^{\frac{2n}{n-2}}\right)^{(n-2)/n}\\
\leq& C(n)r_i^{2}\oint_{B(x,r_i)}|\d(\eta_iv^p)|^2\\
\leq& C(n)r_i^{2}\oint_{B(x,r_i)} 8\lam p\eta_i^2v^{2p}+16pv^{2p}|\d \eta_i|^2\\
&\quad +C(n)\e r_i^{2-2\a}\left(\oint_{B}(\eta v^p)^{\frac{2n}{n-2}}\right)^{\frac{n-2}{n}}+C(n)\e^{-\frac{a}{1-a}}C^{\frac{2q}{2q-n}}p^{\frac{4q}{2q-n}}r_i^{2-2\a}\oint_{B(x,r_i)}\eta_i^2 v^{2p}\\
&\quad +C(n)r_i^2\e\left(\oint_{B(x,r_i)}(\eta_i v^p)^{\frac{2n}{n-2}}\right)^{\frac{n-2}{n}}+C(n)r_i^2\e^{-\frac{a}{1-a}}p^{\frac{4q}{2q-n}}\oint_{B(x,r_i)}\eta_i^2 v^{2p}.
\eal
\ee
Since $r_i\leq r\leq 1$ and $\a<1$, we may choose $\e=\e(n)$ small so that the above inequality becomes
\be\label{GE eq8}
\al
\left(\oint_{B(x,r_i)}(\eta_i v^p)^{\frac{2n}{n-2}}\right)^{\frac{n-2}{n}}
\leq& C(n,\lam,K,\a,\rho)r_i^{2}\oint_{B(x,r_i)} pv^{2p}|\d \eta_i|^2+ p^{\frac{4q}{2q-n}}\eta_i^2 v^{2p}.
\eal
\ee

From the volume comparison theorem, we have
\[
\al
\left(\oint_{B(x,r_{i+1})}(v^p)^{2n/(n-2)}\right)^{(n-2)/n}
\leq& C(n,\lam,K,\a,\rho)\left(\oint_{B(x,r_i)}(\eta_i v^p)^{2n/(n-2)}\right)^{(n-2)/n}\\
\leq& C(n,\lam,K,\a,\rho)\oint_{B(x,r_i)} 2^{2i}pv^{2p}+ p^{\frac{4q}{2q-n}} v^{2p}.
\eal
\]

Now let $\mu=\frac{n}{n-2}$ and choose $p=\frac{1}{2}\mu^i$ for $i=0,1,2,\cdots$. Then
\[
\al
\left(\oint_{B(x,r_{i+1})}v^{\mu^{i+1}}\right)^{(n-2)/n}=&\left(\oint_{B(x,r_{i+1})}(v^p)^{2n/(n-2)}\right)^{(n-2)/n}\\
\leq& C(n,\lam,K,\a,\rho)(2^{2i-1}\mu^i+\mu^{2qi/(2q-n)})\oint_{B(x,r_i)}v^{\mu^i}\\
\leq& C(n,\lam,K,\a,\rho)4^{2qi/(2q-n)}\oint_{B(x,r_i)}v^{\mu^i},
\eal
\]
i.e.,
\be\label{GE eq9}
\al
||v||^*_{\mu^{i+1},B(x,r_{i+1})}\leq C^{\mu^{-i}}4^{\frac{2q}{2q-n}i\mu^{-i}}||v||^*_{\mu^{i},B(x,r_i)}
\eal
\ee
By using \eqref{GE eq9} iteratively, we get
\be\label{GE eq10}
\sup_{B(x,\frac{1}{2}r)}v\leq C^{\sum\mu^{-i}}4^{\frac{2q}{2q-n}\sum i\mu^{-i}}||v||^*_{1,B(x,\frac{3}{4}r)}\leq C(n,\lam,K,\a,\rho)||v||^*_{1,B(x,\frac{3}{4}r)}.
\ee

Choose $\eta=\phi(d(x,\cdot))$, where $\phi$ satisfies
\[
supp\, \phi\subset[0,r],\ \phi_l\equiv1\ in\ [0,\frac{3}{4}r],\ |\phi'|\leq 5r^{-1}.
\]
Since
\[
\al
\int_{B(x,r)}\eta^2|\d u|^2&=\int_{B(x,r)}-\eta^2uf-2\eta u\d_i u\d_i \eta\\
&\leq \int_{B(x,r)}\frac{1}{2}u^2\eta^2+\frac{1}{2}f^2\eta^2+\frac{1}{2}\eta^2|\d u|^2+2u^2|\d \eta|^2,
\eal
\]
this together with the definition of $\eta$ imply that
\[
\al
\oint_{B(x,r)}\eta^2|\d u|^2&\leq 4\oint_{B(x,r)}u^2\eta^2+f^2\eta^2+u^2|\d \eta|^2\\
&\leq 100r^{-2}(||u||^*_{2,B(x,r)})^2+4||f^2||^*_{q,B(x,r)}.
\eal
\]
From above, we arrive at
\be\label{GE eq11}
\al
||v||^*_{1,B(x,\frac{3}{4}r)}&\leq \frac{\v(B(x,r))}{\v(B(x,\frac{3}{4}r))}\oint_{B(x,r)}\eta^2(|\d u|^2+||f^2||^*_{q,B(x,r)})\\
&\leq C(n,\lam,K,\a,\rho)r^{-2}\left[(||u||^*_{2,B(x,r)})^2+(||f||^*_{2q,B(x,r)})^2\right].
\eal
\ee
Combining \eqref{GE eq10} and \eqref{GE eq11}, we deduce that, for any $q\in(n,\frac{n}{\a})$,
\[
\sup_{B(x,\frac{1}{2}r)}|\d u|^2\leq ||v||_{\infty,B(x,\frac{1}{2}r)}\leq C(n,\lam,K,\a,\rho)r^{-2}\left[(||u||^*_{2,B(x,r)})^2+(||f||^*_{2q,B(x,r)})^2\right].
\]
This finishes the proof of the theorem due to the fact that $||f||^*_{q,B(x,r)}\leq ||f||^*_{q',B(x,r)}$ whenever $q'\geq q$.

If $\a=0$, then the Sobolev inequality in section 3 holds without the volume non-collapsing condition. Therefore the conclusion of the theorem also holds without it.
\end{proof}

\section{Appendix II}

Here we give a proof of Theorem \ref{thsegineq}, the segment inequality.
\begin{proof}[Proof of Theorem \ref{thsegineq}]

We may assume that the minimal geodesic from $x_1$ to $x_2$ is unique, since the set of this type of points $(x_1,x_2)$ is dense. Moreover, since
\[
\al
\mathcal{F}_f(x_1,x_2)=&\int_0^{\frac{d(x_1,x_2)}{2}}f(\gamma(s))ds+ \int_{\frac{d(x_1,x_2)}{2}}^{d(x_1,x_2)}f(\gamma(s))ds\\
=&\int_{\frac{d(x_1,x_2)}{2}}^{d(x_1,x_2)}f(\gamma(d(x_1,x_2)-t))dt + \int_{\frac{d(x_1,x_2)}{2}}^{d(x_1,x_2)}f(\gamma(s))ds\\
:=&\mathcal{F}_{f,1}(x_1,x_2) + \mathcal{F}_{f,2}(x_1,x_2),
\eal
\]
to estimate $\mathcal{F}_f$ it suffices to estimate $\mathcal{F}_{f,2}(x_1,x_2)$.

Now fixe $x_1\in A_1$. For any unit tangent vector $\theta$ at $x_1$, let $\gamma(0)=x_1$ and $\gamma'(0)=\theta$. Let $l(\theta)$ be the largest number $\leq 2r$ such that $\gamma\big|_{[0,l(\theta)]}$ is minimal. Then
\[
\al
\int_{A_2}\mathcal{F}_{f,2}(x_1,x_2)dg(x_2)\leq& \int_{B(x_1,2r)}\mathcal{F}_{f,2}(x_1,x_2)dg(x_2)\\ =&\int_{S^{n-1}}\int_0^{l(\theta)}\mathcal{F}_{f,2}(x_1,\gamma(s))w(s,\theta)dsd\theta\\
=&\int_{S^{n-1}}\int_0^{l(\theta)}\left[\int_{\frac{s}{2}}^{s}f(\gamma(u))du\right]w(s,\theta)dsd\theta\\
\leq&\int_{S^{n-1}}\int_0^{l(\theta)}\int_{\frac{s}{2}}^{s}f(\gamma(u))w(u,\theta)C(n,\lam,K,\a)\frac{s^{n-1}}{u^{n-1}}dudsd\theta\\
\leq&C(n,\lam,K,\a)\int_{S^{n-1}}\int_0^{2r}\int_{0}^{2r}f(\gamma(u))w(u,\theta)dudsd\theta\\
=&C(n,\lam,K,\a)r\int_{B(x_1,2r)}f(x)dg\\
\leq&C(n,\lam,K,\a)r\int_{B(x,3r)}f(x)dg.
\eal
\]
In the fourth step above, we have used \eqref{volume element} to get
\[
w(s,\theta)\leq C(n,\lam,K,\a)\frac{s^{n-1}}{u^{n-1}}w(u,\theta).
\]
Thus,
\[
\int_{A_1}\int_{A_2}\mathcal{F}_{f,2}(x_1,x_2)dg(x_2)dg(x_1)\leq C(n,\lam,K,\a)r\v(A_1)\int_{B(x,3r)}f(x)dg.
\]
Similarly, we can prove
\[
\int_{A_2}\int_{A_1}\mathcal{F}_{f,1}(x_1,x_2)dg(x_1)dg(x_2)\leq C(n,\lam,K,\a)r\v(A_2)\int_{B(x,3r)}f(x)dg.
\]
This finishes the proof of the theorem.
\end{proof}

Next, we prove Lemma \ref{lem cone rigidity}. First  from Theorem \ref{thm volume element comparison}, we can make the following

\begin{claim}
Suppose that \eqref{basichypo1} and \eqref{basichypo2} hold. Let $r\leq 1$, and $0<\eta<1$. Then for some $\Psi=\Psi(\lam, K|n,\a,\rho)$, we have
\be\label{comp1}
\frac{\vol(\pa B(x,r))}{\vol(\pa \underline{B}(0,r))}\leq (1+\Psi)\frac{\vol(A_{\eta r,r}(x))}{\vol(\underline{A}_{\eta r, r}(0))},
\ee
and
\be\label{comp2}
\frac{\vol(A_{\eta r,r}(x))}{\vol(\underline{A}_{\eta r, r}(0))}\leq (1+\Psi)\frac{\vol(\pa B(x,\eta r))}{\vol(\pa \underline{B}(0,\eta r))}.
\ee Here $A_{r_1, r_2}(x)=B(x, r_2)-B(x, r_1)$ is the annulus in $\M$  and $\underline{A}_{r_1, r_2}(0)$ is the corresponding Euclidean annulus.
\end{claim}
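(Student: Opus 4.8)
The plan is to work in geodesic polar coordinates centred at $x$ and to reduce both inequalities to the volume--element ratio bound \eqref{volume element ratio} of Theorem \ref{thm volume element comparison}. Write $\omega_{n-1}=\vol(S^{n-1})$ and adopt the standard convention that $w(s,\theta)$ equals $0$ once $s\theta$ passes the cut locus. Then
\[
\vol(\pa B(x,s))=\int_{S^{n-1}}w(s,\theta)\,d\theta,\qquad \vol(A_{\eta r,r}(x))=\int_{S^{n-1}}\int_{\eta r}^{r}w(s,\theta)\,ds\,d\theta,
\]
while $\vol(\pa\underline{B}(0,s))=\omega_{n-1}s^{n-1}$ and $\vol(\underline{A}_{\eta r,r}(0))=\frac{\omega_{n-1}}{n}\big(r^{n}-(\eta r)^{n}\big)$. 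Since $r\le 1$ and $0\le\a<1$, for every $s\in(0,1]$ the factor $e^{C(\a)Ks^{1-\a}+\lam s^{2}}$ appearing in \eqref{volume element ratio} is at most $e^{C(\a)K+\lam}=:1+\Psi$, with $\Psi=\Psi(\lam,K\,|\,n,\a,\rho)\to 0$ as $\lam,K\to 0$; hence \eqref{volume element ratio} gives, for a.e.\ direction $\theta$ and all $0<s_{1}<s_{2}\le 1$ below the cut distance,
\[
\frac{w(s_{2},\theta)}{s_{2}^{\,n-1}}\ \le\ (1+\Psi)\,\frac{w(s_{1},\theta)}{s_{1}^{\,n-1}}.
\]

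For \eqref{comp1} I would fix $\theta$ and apply the last inequality with $s_{1}=s\in[\eta r,r]$ and $s_{2}=r$, which yields $w(s,\theta)\ge(1+\Psi)^{-1}(s/r)^{n-1}w(r,\theta)$ for every $s\in[\eta r,r]$: this is trivial when $w(r,\theta)=0$, and otherwise legitimate because then $r\theta$, hence also $s\theta$, lies before the cut locus. Integrating in $s$ over $[\eta r,r]$ and then over $\theta\in S^{n-1}$ produces
\[
\vol(A_{\eta r,r}(x))\ \ge\ (1+\Psi)^{-1}\,\frac{r^{n}-(\eta r)^{n}}{n\,r^{n-1}}\,\vol(\pa B(x,r)),
\]
and, since $\vol(\underline{A}_{\eta r,r}(0))/\vol(\pa\underline{B}(0,r))=(r^{n}-(\eta r)^{n})/(n\,r^{n-1})$, this rearranges to exactly \eqref{comp1}. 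For \eqref{comp2} I would instead use the inequality above with $s_{1}=\eta r$ and $s_{2}=s\in[\eta r,r]$, obtaining $w(s,\theta)\le(1+\Psi)(s/(\eta r))^{n-1}w(\eta r,\theta)$ for every $s$ and $\theta$ (trivial when $w(s,\theta)=0$, and otherwise legitimate since then $\eta r\,\theta$ lies before the cut locus). Integrating in $s$ and $\theta$ gives
\[
\vol(A_{\eta r,r}(x))\ \le\ (1+\Psi)\,\frac{r^{n}-(\eta r)^{n}}{n\,(\eta r)^{n-1}}\,\vol(\pa B(x,\eta r)),
\]
which, after dividing by $\vol(\underline{A}_{\eta r,r}(0))$ and $\vol(\pa\underline{B}(0,\eta r))=\omega_{n-1}(\eta r)^{n-1}$, is exactly \eqref{comp2}.

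The computation is essentially routine, so I do not expect a serious obstacle; the two points that need care are (i) that \eqref{volume element ratio} is only a one-sided, almost--monotone bound, so it must be applied in the correct direction in each of the two estimates --- with $s$ playing the role of the inner radius for \eqref{comp1} and of the outer radius for \eqref{comp2} --- and (ii) that the values of $w$ past the cut locus are handled by the star--shapedness of the polar--coordinate domain, which forces the relevant pointwise inequalities to hold trivially whenever a factor $w(\cdot,\theta)$ vanishes. Finally I would record that the $\Psi$ produced above genuinely has the form $\Psi(\lam,K\,|\,n,\a,\rho)$ demanded by Lemma \ref{lem cone rigidity}, i.e.\ $\Psi\to 0$ as $\lam,K\to 0$, which is immediate from $\Psi=e^{C(\a)K+\lam}-1$.
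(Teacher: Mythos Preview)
Your proposal is correct and follows essentially the same approach as the paper's proof: both reduce the claim to the volume--element ratio bound \eqref{volume element ratio}, integrate over $S^{n-1}$ and over $s\in[\eta r,r]$, and identify the resulting factors with the Euclidean ratios. Your treatment is in fact slightly more careful than the paper's, since you explicitly handle the cut--locus issue via star--shapedness and write out the form $\Psi=e^{C(\a)K+\lam}-1$, whereas the paper simply applies \eqref{volume element ratio}, integrates, and says ``Similarly'' for \eqref{comp2}.
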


\begin{proof}
Given $r\leq 1$, for any $s<r$, by Theorem \ref{thm volume element comparison}, we have
\be\label{comp eq 1}
\frac{w(r,\cdot)}{r^{n-1}}\leq (1+\Psi)\frac{w(s,\cdot)}{s^{n-1}}.
\ee
It follows that
\be\label{comp eq 2}
s^{n-1}\int_{S^{n-1}}w(r,\theta)d\theta\leq (1+\Psi)r^{n-1}\int_{S^{n-1}}w(s,\theta)d\theta.
\ee
Integrating both sides from $\eta r$ to $r$ with respect to $s$ gives \eqref{comp1}.

Similarly, one can show \eqref{comp2}. This proves the claim.
\end{proof}

Now we are ready to prove Lemma \ref{lem cone rigidity}, which is
\begin{lemma}
If
\[
(1-\delta')\frac{\vol(B(x,\frac{1}{2}R))}{\vol(\underline{B}(0,\frac{1}{2}R))}\leq \frac{\vol(B(x,R))}{\vol(\underline{B}(0,R))},
\]
then for any $0<\eta\leq\frac{1}{2}$ and some $\Psi=\Psi(\delta',\lam,K|n,\a,\rho)$, we have
\[
(1-\Psi)\frac{\vol(B(x,(1-\eta)R))}{\vol(\underline{B}(0,(1-\eta)R))}\leq \frac{\vol(\pa B(x,(1-\eta)R))}{\vol(\pa \underline{B}(0,(1-\eta)R))}.
\]
\end{lemma}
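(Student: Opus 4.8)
The plan is to work throughout with the two normalized functions
$f(r)=\vol(B(x,r))/\vol(\underline{B}(0,r))$ and $g(r)=\vol(\pa B(x,r))/\vol(\pa\underline{B}(0,r))$, where I adopt the convention $\vol(\pa B(x,r))=\int_{S^{n-1}}w(r,\theta)\,d\theta$ (with $w$ set to $0$ past the cut locus), so that $\vol(B(x,r))=\int_0^r\vol(\pa B(x,s))\,ds$ and hence $\frac{d}{dr}f(r)=\frac{n}{r}(g(r)-f(r))$ for a.e.\ $r$. In this notation the conclusion to be proved is exactly $g((1-\eta)R)\ge(1-\Psi)f((1-\eta)R)$.

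First I would record the consequences of the volume element comparison Theorem \ref{thm volume element comparison}(a) (here $R\le1$, as in all applications of the lemma): $w(s,\theta)/s^{n-1}$ is almost non-increasing in $s$, i.e.\ $w(s_2,\theta)/s_2^{n-1}\le(1+\Psi_0)\,w(s_1,\theta)/s_1^{n-1}$ for $s_1<s_2\le R$ with $\Psi_0:=e^{C(\a)K+\lam}-1=\Psi_0(\lam,K\mid n,\a)\to0$ as $\lam,K\to0$. Integrating over $\theta$ makes $g$ almost non-increasing on $(0,R]$; integrating in $s$ from $0$ (this is the computation already carried out in the proof of part (a)) gives $g(r)\le(1+\Psi_0)f(r)$; and feeding $g-f\le\Psi_0 f$ into $f'=\frac{n}{r}(g-f)$ yields $(\log f)'\le n\Psi_0/r$, so that $f$ is almost non-increasing as well, with $f(r_2)\le(r_2/r_1)^{n\Psi_0}f(r_1)$ for $r_1<r_2\le R$.

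Next I would feed in the hypothesis. Combining $f(R)\ge(1-\delta')f(R/2)$ with almost-monotonicity of $f$ on $[R/2,R]$ (where $r_2/r_1\le2$) pins $f$ near the constant $a:=f(R/2)$, namely $\frac{1-\delta'}{1+\Psi_1}a\le f(r)\le(1+\Psi_1)a$ for all $r\in[R/2,R]$, with $\Psi_1:=2^{n\Psi_0}-1$. Integrating $f'=\frac{n}{r}(g-f)$ over $[R/2,R]$, splitting $g-f=(g-f)_+-(f-g)_+$, and using $(g-f)_+\le\Psi_0 f\le\Psi_0(1+\Psi_1)a$ together with $f(R)-f(R/2)\ge-\delta'a$, one obtains the averaged closeness $\int_{R/2}^R(f(r)-g(r))_+\,dr\le\frac{R}{n}\Psi_2 a$ for a $\Psi_2=\Psi_2(\delta',\lam,K\mid n,\a)\to0$.

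The last step, upgrading this averaged bound to the pointwise inequality at the prescribed radius $r_0:=(1-\eta)R\in[R/2,R]$, is where the work lies. If $g(r_0)<(1-\Psi)f(r_0)$ then, by almost-monotonicity of $g$, one has $g(s)\le(1+\Psi_0)g(r_0)<(1+\Psi_0)(1+\Psi_1)(1-\Psi)a$ for all $s\in[r_0,R]$, while $f(s)\ge\frac{1-\delta'}{1+\Psi_1}a$ there, so $f(s)-g(s)\ge\kappa a$ on $[r_0,R]$ with $\kappa=\kappa(\delta',\lam,K,\Psi\mid n,\a)$ satisfying $\kappa\to\Psi$ as $\delta',\lam,K\to0$; comparing with the averaged bound over the interval $[r_0,R]$ of length $\eta R$ forces $\eta R\,\kappa a\le\frac{R}{n}\Psi_2 a$, so that choosing $\Psi$ to be a fixed multiple of $\delta'+\Psi_0+\Psi_1$ (making $\kappa\ge\Psi/2$ while keeping $\Psi=\Psi(\delta',\lam,K\mid n,\a,\rho)\to0$) produces a contradiction whenever $\eta$ is bounded below in terms of $\delta',\lam,K$. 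For $\eta$ close to $0$, where this interval becomes too short for the counting argument, I would instead invoke the annulus comparison \eqref{comp1}--\eqref{comp2} of the Claim above at radius $R$ with inner radius $r_0$, which bounds $g(r_0)$ below by $(1+\Psi_0)^{-1}$ times the annular ratio $\vol(A_{r_0,R}(x))/\vol(\underline{A}_{r_0,R}(0))$, and then estimate that annular ratio via the pinching of $f$ obtained above; since in every application of this lemma $\eta$ is a fixed constant, the resulting $\Psi$ is allowed to depend on $\eta$ as well. Making this final passage genuinely uniform over $\eta\in(0,\tfrac12]$ is the main obstacle: the integral bound only controls $g$ on average, so extracting the value of $g$ at one prescribed radius near $R$ has to be done with care, and it is precisely there that the almost-monotonicity of $g$ and the annulus inequalities are needed.
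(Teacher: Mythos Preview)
Your argument is correct, but it takes an unnecessary detour. The paper's proof is a short direct manipulation that coincides with what you call your ``fallback'' for small $\eta$, and it works for every $\eta\in(0,\tfrac12]$ without any averaging or contradiction step: from the hypothesis together with the almost-monotonicity of $f$ one gets $(1-\delta')f((1-\eta)R)\le(1+\Psi)f(R)$; clearing denominators and subtracting $(1-\delta')\vol(B(x,(1-\eta)R))\vol(\underline B(0,(1-\eta)R))$ from both sides isolates the annular ratio $\vol(A_{(1-\eta)R,R}(x))/\vol(\underline A_{(1-\eta)R,R}(0))$ on the right; then \eqref{comp2} bounds this annular ratio above by $(1+\Psi)g((1-\eta)R)$, and rearranging gives the conclusion. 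Your integrated bound on $(f-g)_+$ and the contradiction argument for $\eta$ bounded below are thus superfluous --- the annulus route you reserve for small $\eta$ already handles all cases. Note also that both the paper's final $\Psi$ and yours pick up a factor $\vol(\underline B(0,(1-\eta)R))/\vol(\underline A_{(1-\eta)R,R}(0))\sim 1/(n\eta)$ as $\eta\to0$, so neither proof delivers a $\Psi$ uniform in $\eta$; the lemma statement is slightly imprecise on this point, but since $\eta$ is fixed in all applications this is harmless, as you already observed.
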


\begin{proof}
By Theorem \ref{thm volume element comparison} and the assumption of the lemma, we have
\[
(1-\delta')\frac{\v(B(x,(1-\eta)R))}{\v(\underline{B}(0,(1-\eta)R))}\leq(1-\delta')(1+\Psi)\frac{\vol(B(x,\frac{1}{2}R))}{\vol(\underline{B}(0,\frac{1}{2}R))}\leq (1+\Psi)\frac{\vol(B(x,R))}{\vol(\underline{B}(0,R))}.
\]
It implies that
\[
(1-\delta')\v(B(x,(1-\eta)R))\vol(\underline{B}(0,R))\leq (1+\Psi)\vol(B(x,R))\v(\underline{B}(0,(1-\eta)R)).
\]
Subtracting $(1-\delta')\v(B(x,(1-\eta)R))\v(\underline{B}(0,(1-\eta)R))$ from both sides above yields
\[
\al
&(1-\delta')\v(B(x,(1-\eta)R))\vol(\underline{A}_{(1-\eta)R,R}(0))\\
\leq& \vol(A_{(1-\eta)R,R}(x))\v(\underline{B}(0,(1-\eta)R))+\Psi\vol(B(x,R))\v(\underline{B}(0,(1-\eta)R))\\
&\  +\delta'\v(B(x,(1-\eta)R))\v(\underline{B}(0,(1-\eta)R)).
\eal
\]
Dividing both sides above by $\v(\underline{B}(0,(1-\eta)R))\vol(\underline{A}_{(1-\eta)R,R}(0))$ and using Theorem \ref{thm volume element comparison} give
\[\al
&(1-\delta')\frac{\v(B(x,(1-\eta)R))}{\v(\underline{B}(0,(1-\eta)R))}\\
\leq& \frac{\vol(A_{(1-\eta)R,R}(x))}{\vol(\underline{A}_{(1-\eta)R,R}(0))}+\Psi\frac{\vol(B(x,R))}{\vol(\underline{A}_{(1-\eta)R,R}(0))}+\delta'\frac{\v(B(x,(1-\eta)R))}{\vol(\underline{A}_{(1-\eta)R,R}(0))}\\
\leq& \frac{\vol(A_{(1-\eta)R,R}(x))}{\vol(\underline{A}_{(1-\eta)R,R}(0))}+\left[2^n\Psi(1+\Psi)+\delta'\right]\frac{\vol(B(x,(1-\eta)R))}{\vol(\underline{A}_{(1-\eta)R,R}(0))}.
\eal
\]
Moving the second term on the LHS to the RHS and using the above claim, we get
\[
(1-\delta)\frac{\v(B(x,(1-\eta)R))}{\v(\underline{B}(0,(1-\eta)R))}\leq (1+\Psi)\frac{\vol(\pa B(x,(1-\eta)R))}{\vol(\pa \underline{B}(0,(1-\eta)R))},
\]
where
\[
\delta=\delta'+\left[2^n\Psi(1+\Psi)+\delta'\right]\frac{\v(\underline{B}(0,(1-\eta)R))}{\vol(\underline{A}_{(1-\eta)R,R}(0))}.
\]
This finishes the proof of the lemma.
\end{proof}

\section*{Acknowledgements}  Q.S.Z is grateful to the Simons foundation for its support. M. Z would like to thank Prof. Huai-Dong Cao for constant support and his interest in this paper. M. Z's research is partially supported by NSFC Grant No. 11501206.
Both authors wish to thank Professors G.F. Wei,   Z.L. Zhang  and X.H. Zhu  for helpful suggestions.

\end{document}